\documentclass[12pt, reqno]{amsart}
\usepackage{graphicx} 
\usepackage{dsfont}
\usepackage{amsmath}
\usepackage{amscd}
\usepackage{amsthm}
\usepackage{amssymb} \usepackage{latexsym}
\usepackage{eufrak}
\usepackage{euscript}
\usepackage{epsfig}
\usepackage{graphics}
\usepackage{array}
\usepackage{enumerate}
\usepackage{color}
\usepackage{wasysym}
\usepackage{pdfsync}
\usepackage{stmaryrd}

\newcommand{\C}{\mathbb{C}}
\newcommand{\R}{\mathbb{R}}
\newcommand{\Z}{\mathbb{Z}}
\newcommand{\N}{\mathbb{N}}

\newcommand{\cT}{{\mathcal T}}
\newcommand{\bX}{\mathbb{X}}
\newcommand{\cuad}{{\sqcap\kern-.68em\sqcup}}

\newcommand{\cM}{{\mathcal M}}

\newtheorem{theorem}{Theorem}[section]
\newtheorem{proposition}{Proposition}[section]

\newtheorem{lemma}{Lemma}[section]
\newtheorem{corollary}{Corollary}[section]
\newtheorem{remark}{Remark}[section]
\newcommand{\bremark}{\begin{remark} \em}
\newcommand{\eremark}{\end{remark} }

\newtheorem{problem}{Problem}[section]

    \headsep 30pt \headheight 40pt \textheight=22cm \textwidth=15.3cm
      \topmargin=-2.45cm \oddsidemargin=0.2cm \evensidemargin= 0.2cm  \marginparwidth=80pt

\begin{document}

\title[ Kazdan-Warner equations on the lattice]{On finite-energy solutions of Kazdan-Warner type equations on the lattice graph}


\author{Huyuan Chen}
 \address{Huyuan Chen: Center for Mathematics and Interdisciplinary Sciences, Fudan University,  
Shanghai 200433, China.\qquad 
Shanghai Institute for Mathematics and Interdisciplinary Sciences, 
 Shanghai 200433, PR China.
}
\email{chenhuyuan@yeah.net, chenhuyuan@simis.cn}

\author{Bobo Hua}
\address{Bobo Hua: School of Mathematical Sciences, LMNS,
Fudan University, Shanghai 200433, PR China.}
\email{bobohua@fudan.edu.cn}

\begin{abstract}
We  investigate  finite-energy  solutions to Kazdan-Warner type equations  in 2-dimensional integer lattice graph
$$    - \Delta u= \varepsilon e^{\kappa  u} +\beta\delta_0\quad{\rm in}\ \,  \Z^2,$$
where  $\varepsilon=\pm1$, $\kappa>0$ and  $\beta\in\R$.

When $\varepsilon=1$, we prove the existence of a continuous family of finite-energy solutions for some parameter $\kappa$. This resolves the open problem on the existence of finite-energy solutions to the Liouville equation, i.e. $\kappa=1,\beta=0.$
 
 When $\varepsilon=-1$ and $\beta>\frac{4\pi}{\kappa}$, we prove that the set of finite-energy solutions exhibits a layer structure. Moreover, we derive the extremal solution in this case.
\end{abstract}


\vspace{2mm}



\maketitle

 \noindent {\small {\bf Keywords}:   Kazdan-Warner equations, Liouville equations,   Lattice graphs, Green's functions.}
   \smallskip

   \noindent {\small {\bf AMS Subject Classifications}:  35J91,  05C22    
  
  \smallskip
	\medskip 
\section{Introduction}

The Kazdan-Warner equation is a nonlinear partial differential equation that arises in differential geometry for solving the prescribed Gaussian curvature problem on surfaces in conformal geometry.
Let $(\cM,g)$ be a Riemannian surface and let $\tilde{g} = g e^{2u}$ be a metric with conformal change $u\in C^2(\cM)$. 
For the prescribed Gaussian curvature problem, we have the following equation,  introduced by Kazdan and Warner \cite{KW},
  \begin{equation}\label{eq 1.0-ga}
-\Delta_g u = \tilde{K} e^{2u}-K, 
 \end{equation}
where $\Delta_g$ is the Laplace-Beltrami operator, $K$ and $\tilde{K}$ are the Gaussian curvature of $(\cM,g)$ and $(\cM,\tilde{g})$ respectively.  
These type equations have been the subjects of extensive investigation in the literature \cite{Moser1970,Moser1973,Aubin1979,ding1997differential,Chang2004}, 
motivated by its connections to geometry, statistical mechanics,  and physics; see also the mean field equation \cite{Onsager1949,Eyink2006} and the Chern-Simons equation \cite{jaffee1980vortices,hong1990multivortex} etc.

Consider the special case for conformal changes on the Euclidean plane $\R^2.$ For finding a conformal metric $(\R^2, e^{\kappa u} ds_{\R^2}^2)$ with constant Gaussian curvature $\frac{\kappa}{2}$ with $\kappa>0,$ we have the Liouville equation 
\begin{equation}\label{eq 1.1-liouville}
-\Delta u=e^{\kappa u}\quad {\rm in}\ \ \R^2.
\end{equation} Although one may reduce the general parameter $\kappa$ to the model case $\kappa = 2$ by the scaling of $\R^2,$  we will keep it for our purposes where the scaling is absent. For any solution $u$ of \eqref{eq 1.1-liouville},  by a classical result of Liouville
\cite{Liouville}, any solution $u$ of \eqref{eq 1.1-liouville}
has the form
 $$u(z)=\frac{1}{\kappa}\ln \frac{8|f'|^2}{(1+\kappa |f|^2)^2},$$
where $f$ is a locally univalent meromorphic function on $\C.$ By a seminal result of Chen–Li \cite{CLi1}, the solution $u$ of \eqref{eq 1.1-liouville} satisfies $\int_{\R^2} e^{\kappa u}dx<+\infty$, i.e., it has finite total curvature or finite energy, if and only if there exist $\lambda>0, x_0\in \R^2$ such that
 \begin{equation}\label{eq:expsol}u(x)=\frac1\kappa \ln \frac{32 \lambda^2}{4+ \lambda^2\kappa |x-x_0|^2}.\end{equation} Moreover, in this case the total curvature 
 \begin{equation}\label{eq:totcurv}
 \int_{\R^2} \frac{\kappa }{2}e^{\kappa u}dx=4\pi.\end{equation} See \cite{CW,Chanillo1995,JostWang2002,Brito2004,HangWang2006,Eremenko2007,LiTang2020} for other proofs, and a recent survey \cite{CaiLai2024}.

In recent years, the study of partial differential equations on graphs has attracted increasing attention and has played an important role in various fields. For a given graph, there is a natural discrete Laplace operator, which allows one to formulate and analyze PDE problems in the discrete setting. In the case of nonlinear equations, Grigor’yan, Lin, and Yang \cite{grigor2016kazdan} investigated the Kazdan–Warner equation on finite graphs and established existence results, which were subsequently extended in \cite{ge2017kazdan,fang2018class,ge2020p,liu2020multiple,camilli2022note,pinamonti2022existence,sun2022brouwer,li2023existence,zhang2024fractional}. Existence results for Kazdan–Warner equations on infinite graphs were proven in \cite{ge2018kazdan,keller2018kazdan} under certain assumptions. Furthermore, Huang, Lin, and Yau \cite{huang2020existence} proved the existence of solutions to the mean field equation and the Chern–Simons equation on finite graphs. A number of related works have also been devoted to exponential nonlinearities on graphs; see, for example, \cite{huang2021mean,lu2021existence,hou2022existence,hua2023existence,lin2022calculus,chao2023multiple,gao2023existence,hou2024topological,li2024topological}.


In this paper, we investigate  the existence of solutions of nonlinear equations on the 2-dimensional lattice graph $\Z^2,$ which consists of the set of vertices $\mathbb{Z}^2=\big\{x=(x_1,x_2):x_1,x_2\in \Z\big\}$ and the set of edges $E=\{\{x,y\}: x,y\in \Z^2, |x-y|=1\}.$ Lattice graphs are commonly used in statistical physics and numerical analysis, and they provide a natural framework for studying problems that arise in the theory of partial differential equations in the discrete setting.  For simplicity, we write $x\sim y$ if $\{x,y\}\in E.$  The discrete Laplacian is defined as, for any $u:\Z^2\to \R,$ $$\Delta u(x):=\sum_{y\in \Z^2: y\sim x}\big(u(y)-u(x)\big),\ x\in\mathbb{Z}^2.$$  


In this paper, we consider Kazdan–Warner type equations on $\Z^2,$
  \begin{equation}\label{eq 1.0}
 -\Delta u=  \varepsilon  e^{\kappa u}+\beta\delta_0\quad {\rm in}\ \, \Z^2,
 \end{equation}
where    $\varepsilon=\pm1$,  $\beta\in\R,\,  \kappa>0$ and $\delta_0$ is the indicator function (the delta mass) at the zero. It is called the \emph{(discrete) Liouville equation} for $\epsilon=1$ and $\beta=0.$
 For general $\beta\neq 0,$ the solution $u$ stands as an analogue of Green's function to the nonlinear equation.

When $\varepsilon=1$, the above equation in the manifold $\cM$ takes the form
\begin{equation}\label{eq 1.0-g}
-\Delta u = e^{u} + \beta \delta_0 \quad \text{in } \cM,
\end{equation}
where the Dirac mass $\delta_0$ is interpreted as a source term or singularity in physical models. The parameter $\beta$ specifies the strength of this singularity and is closely connected to the mean field equation, which arises in both geometry and statistical physics, e.g. the mean field limit of the Euler flow in fluid dynamics \cite{CLMP92}.  
Important results have been established concerning the existence of solutions and their qualitative properties, with significant implications for both theoretical physics and the analysis of nonlinear partial differential equations. For example, in \cite{CL03,CLW04}, it was shown that Eq.(\ref{eq 1.0-g}) always admits solutions whenever $\beta = 8m\pi$ for any integer $m$. Moreover, \cite{LW10} proves that for $\beta = 8m\pi$, Eq.(\ref{eq 1.0-g}) admits solutions if and only if the Green function on the two-dimensional flat torus possesses critical points other than the three half-period points. For related studies on the two-dimensional flat tori, we refer the reader to \cite{M79,LY13}.  
These results are essential for understanding the structure of solutions to mean field equations and their interplay with the geometric properties of the underlying space. This motivates our studies in the discrete setting in $\Z^2.$


For the Liouville equation on $\Z^2$, Ge, Hua, and Jiang \cite{ge2018note} proved that there exists a universal constant $c>0$ such that every solution $u$ satisfies \[ \int_{\Z^2} e^{u} \geq c, \] where the integral is with respect to the counting measure, same as the summation.  However, we don't know the existence of finite-energy solutions of the Liouville equation on $\Z^2.$ Note that on $\R^2$ the existence of finite-energy solutions follows directly from their explicit representation, which are radially symmetric with respect to some point, while no such explicit construction is expected in the discrete setting. Moreover, the existence results obtained in \cite{ge2018kazdan,keller2018kazdan} rely on strong assumptions concerning either the infinite graph or the known function, which are not applicable to the Liouville equation on the lattice. Hence, establishing the existence of finite-energy solutions to the Liouville equation on $\Z^2$ constitutes one of fundamental open problems in this area. 

\begin{problem}[{\cite[Problem~2]{ge2018note}}]\label{prob2}
    Is there any solution to the Liouville equation  on $\Z^2$ with finite energy, i.e., $\int_{\Z^2} e^{u}<\infty.$
\end{problem}



When $\varepsilon=1$, the nonlinear term $ e^{\kappa u}$ is a source, the equation (\ref{eq 1.0}) becomes 
 \begin{equation}\label{eq 1.1}
 -\Delta u=    e^{\kappa u}+\beta\delta_0\quad\ 
    {\rm in}\ \  \Z^2
 .\end{equation}
We prove the existence of finite-energy solutions to the above equation for small parameter $\kappa.$ In the literature, a solution $u$ is called a \emph{non-topological} solution if $\lim_{x\to \infty} u(x)=-\infty. $ Note that finite-energy solutions are always non-topological solutions in the discrete case.

\begin{theorem}\label{teo 1}
\begin{enumerate}
\item[$(i)$] There are universal constants $\kappa^*>0$ and $a_0>2$ such that for any $\kappa\in (0,\kappa^*],$ and any $\beta\in [0,\frac{2\pi a_0}{\kappa}),$ there exists a solution $u$ to the equation \eqref{eq 1.1} satisfying 
\begin{align}\label{ep-1}
u(x)=-\frac{a_0}{\kappa}\ln|x|+d_{\kappa,\beta}+O\big(|x|^{\frac{2-a_0}{a_0+1}}\big(\ln |x|\big)^{\frac{1}{a_0+1}}\big) 
\quad {\rm as}\ \,  x\to\infty,
\end{align}
where  $d_{\kappa,\beta}\in\R$ depends on $\kappa,\beta$. 
Moreover,  \begin{align}\label{ep-1-e22}
\int_{\Z^2} e^{\kappa u} =\frac{2\pi a_0}{\kappa}-\beta.
\end{align}
\item[$(ii)$] For any $\epsilon\in(0,1)$, there exists $\overline{\kappa}=\overline{\kappa}(\epsilon)>\kappa^*$ such that for any  $\kappa\in (0,\overline{\kappa}],$ any $\alpha\in\big[\frac{2\pi(2+\epsilon)}{\kappa},\frac{2\pi(2+\epsilon^{-1})}{\kappa}\big]$ and any $\beta\in[0,\alpha),$ 
there exists a solution $u_{\kappa,\alpha,\beta}$ to the equation \eqref{eq 1.1} satisfying 
\begin{align}\label{ep-1}
u_{\kappa,\alpha,\beta}(x)=-\frac{\alpha}{2\pi}\ln|x|+d_{\kappa,\alpha,\beta}+O\big(|x|^{\frac{4\pi-\alpha\kappa}{\alpha\kappa+2\pi}}\big(\ln |x|\big)^{\frac{2\pi }{\alpha\kappa+2\pi}}\big) 
\quad {\rm as}\ \,  x\to\infty,
\end{align}
where $d_{\kappa,\alpha,\beta}\in\R$ depends on $\kappa,\alpha,\beta$. Moreover, 
 \begin{align}\label{ep-1-e22}
\int_{\Z^2} e^{\kappa u_{\kappa,\alpha,\beta}} =\alpha-\beta.
\end{align} 
\end{enumerate}
 \end{theorem}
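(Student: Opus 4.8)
The plan is to convert equation \eqref{eq 1.1} into a fixed-point problem for the density $\phi:=e^{\kappa u}$ built from the lattice Green's function, and to solve it by a compactness argument in a weighted space tuned to the prescribed logarithmic decay. Let $g$ denote the potential kernel of $\Z^2$, normalized so that $-\Delta g=\delta_0$ with the classical expansion $g(x)=-\frac1{2\pi}\ln|x|+c_0+O(|x|^{-2})$ as $x\to\infty$. Summing the equation and using the discrete divergence theorem shows that any finite-energy solution with $u(x)=-\frac{M}{2\pi}\ln|x|+o(\ln|x|)$ must have total mass $M=\int_{\Z^2}e^{\kappa u}+\beta$; matching the logarithmic coefficient forces $M=\alpha$ in part $(ii)$ and $M=\frac{2\pi a_0}{\kappa}$ in part $(i)$. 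Accordingly I would \emph{prescribe} the total mass $\alpha$, equivalently the decay rate $\gamma:=\frac{\kappa\alpha}{2\pi}$, and seek $u=c+\beta g+g*\phi$ with $\phi=e^{\kappa u}\ge0$ and $\int_{\Z^2}\phi=\alpha-\beta$. Eliminating the additive constant $c$ through the mass constraint reduces everything to the single normalized equation $\phi=\cT\phi$, where
\[
(\cT\phi)(x)=(\alpha-\beta)\,\frac{e^{\kappa(\beta g+g*\phi)(x)}}{\sum_{z\in\Z^2}e^{\kappa(\beta g+g*\phi)(z)}}.
\]
Any fixed point $\phi$ returns a genuine solution of \eqref{eq 1.1} with $\int_{\Z^2}e^{\kappa u}=\alpha-\beta$, and $c$ is recovered from the denominator.

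The core of the argument is an asymptotic expansion of the nonlinear potential. First I would fix the closed convex set of nonnegative $\phi$ with $\int_{\Z^2}\phi=\alpha-\beta$ and two-sided tail control $\phi(y)\asymp (1+|y|)^{-\gamma}$ away from the origin. For such $\phi$ one shows
\[
(g*\phi)(x)=-\frac{\int_{\Z^2}\phi}{2\pi}\ln|x|+\mathrm{const}+R(x),
\]
where $R$ is estimated by splitting the convolution into the regions $|y|\lesssim|x|$ and $|y|\gtrsim|x|$ and by isolating the near-diagonal singularity of $\ln|x-y|$. Optimizing the cutoff radius between the two resulting error contributions is what produces precisely the remainder $O\big(|x|^{\frac{2-\gamma}{\gamma+1}}(\ln|x|)^{\frac1{\gamma+1}}\big)$ appearing in the stated asymptotics. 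Inserting this expansion into $\cT$ and using $\beta g+g*\phi\sim-\frac{\alpha}{2\pi}\ln|x|$ shows that $e^{\kappa(\beta g+g*\phi)}\sim|x|^{-\gamma}$ is summable exactly when $\gamma>2$, and that $\cT\phi$ again lies in the same decay class with the \emph{same} exponent $\gamma$ (the rate is locked because $\cT$ preserves the total mass $\alpha-\beta$). This self-consistency is the crux: the condition $\gamma>2$ is exactly $a_0>2$ in $(i)$ and $\gamma\ge 2+\epsilon$ in $(ii)$, and it pins down the admissible ranges of $\kappa$, $\alpha$ and $\beta$.

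Once invariance of the convex set is established, I would verify that $\cT$ is continuous and compact --- pointwise convergence on $\Z^2$ together with the uniform tail bound yields sequential compactness in the weighted norm --- and apply Schauder's fixed-point theorem to obtain $\phi$, hence $u$. The expansion of $g*\phi$ then delivers the stated asymptotics, with $d_{\kappa,\alpha,\beta}$ read off from $c$ and the constant term, while the mass identity is immediate from $\int_{\Z^2}\phi=\alpha-\beta$. For $(ii)$ I would let $\gamma$ range over $[2+\epsilon,2+\epsilon^{-1}]$, the window bounded away from $2$ and from $\infty$ in which all estimates are uniform up to $\kappa=\overline\kappa(\epsilon)$; letting $\alpha$ vary continuously then produces the announced continuous family. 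For $(i)$ I would instead fix the single universal rate $\gamma=a_0$ and track every constant explicitly so that the invariance and fixed-point steps go through for all $\kappa\le\kappa^*$ with $\kappa^*,a_0$ independent of the data.

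The hardest part will be the remainder estimate for $g*\phi$ together with its self-consistency. One must bound a logarithmic convolution against a function known only to lie in the decay class, extract the leading logarithm with a clean constant, and control the remainder by a weight that the map $\cT$ reproduces; the exponent $\frac{2-\gamma}{\gamma+1}$ is exactly the output of balancing the near- and far-field errors over the cutoff radius. A secondary difficulty is the infinite-volume normalization: the denominator $\sum_{z}e^{\kappa(\beta g+g*\phi)(z)}$ has no a priori uniform lower bound, so continuity and compactness of $\cT$ must be argued through uniform tail decay rather than through any global bound.
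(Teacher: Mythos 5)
Your proposal is, structurally, the paper's own proof written in different variables: the paper also reduces \eqref{eq 1.1} to a Schauder fixed point for the convolution with the lattice Green's function $\Phi_0$ (your $g$), uses the same normalization device (its additive constant $c_v$ plays exactly the role of your mass-normalizing denominator), relies on the same key expansion of $\Phi_0\ast\phi$ with the balanced remainder exponent $\frac{2-\gamma}{\gamma+1}$ (Proposition~\ref{pr 2.1} and Corollary~\ref{cr 2.2}), and closes the argument by compactness of weighted $\ell^\infty$ embeddings. Solving for the density $\phi=e^{\kappa u}$ rather than for the perturbation $u-\alpha\Phi_0$ is only a change of unknown, so nothing is gained or lost there. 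The genuine problem is at the step you yourself call the crux: the claim that, after ``tracking every constant,'' the invariance of your convex set under $\cT$ goes through for all $\kappa\le\kappa^*$.

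That step fails, and the reason is structural, not a matter of bookkeeping. Because the decay rate must satisfy $\gamma=\frac{\kappa\alpha}{2\pi}>2$, the prescribed mass is $\alpha-\beta$ with $\alpha=\frac{2\pi\gamma}{\kappa}$; in the case $\beta=0$ (which the theorem includes, and which is the Liouville case) one has $\kappa(\alpha-\beta)=2\pi\gamma>4\pi$ exactly, with no smallness as $\kappa\to0$. Any $\phi$ in your set has $\|\phi\|_{\ell^\infty_\gamma}\ge(\alpha-\beta)/Z_\gamma$ with $Z_\gamma:=\sum_z(1+|z|)^{-\gamma}$, forced by the mass constraint, and the remainder $R$ in the expansion of $g\ast\phi$ is controlled only by $C(\gamma)\big(\|\phi\|_{\ell^\infty_\gamma}+\alpha-\beta\big)$ with $C(\gamma)\ge c_0^{\gamma}(\gamma-2)^{-4}$. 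What enters $\cT\phi$ is $e^{\kappa R}$, and $\kappa\|R\|_\infty\le C(\gamma)\,\kappa(\alpha-\beta)\,(1+\Lambda/Z_\gamma)$ if $\Lambda$ denotes the tail constant of your set: the $\kappa$'s cancel identically, so the invariance requirement becomes the $\kappa$-free inequality $\exp\big(2C(\gamma)\,2\pi\gamma\,(1+\Lambda/Z_\gamma)\big)\le\Lambda$, which has no solution $\Lambda$ for any $\gamma>2$ unless the universal constant $c_0$ of the convolution estimate is essentially $1$ --- and the proof of that estimate gives nothing of the sort. The same obstruction can be seen abstractly: the substitution $W=\kappa u+\ln\kappa$ carries \eqref{eq 1.1} with parameters $(\kappa,\beta)$ to the same equation with parameters $(1,\kappa\beta)$, preserves finiteness of energy, and maps the window of rates in the statement onto itself, so no argument can derive genuine leverage from $\kappa\to0$; whatever smallness is needed must come from the Green's-function constants, and that is precisely what is not established. (Be aware that the paper's own verification conceals the same difficulty: its condition \eqref{req-1--012}, equivalently \eqref{eq:q2}, carries an extra factor $1/\kappa$ that is absent from the hypothesis \eqref{req-1--12} of Proposition~\ref{cr 3.1} it is meant to verify, and it is only this spurious factor that makes $\kappa$-smallness appear to close the argument.) A secondary, fixable point: pointwise convergence plus the crude two-sided tail bound does not give compactness in the full $\ell^\infty_\gamma$ norm (a bump of the weight drifting to infinity is a counterexample); you need either the refined uniform asymptotics of the image $\cT\phi$, or, as the paper does, a strictly weaker weight for the ambient norm.
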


Since no radial solutions are expected on $\Z^2,$ we prove the results by using Schauder's fixed point theorem. We first prepare proper Banach spaces, $\ell^\infty_\sigma(\Z^2)$ for $\sigma>0,$ weighted with respect to the $\sigma$-th power of the distance function; see \eqref{eq:wsob}. The key ingredient is the compact embedding of $\ell^\infty_{\sigma_1}(\Z^2)\hookrightarrow \ell^\infty_{\sigma_2}(\Z^2)$  for $\sigma_1>\sigma_2;$ see Lemma~\ref{lm 2.1}. Using the precise asymptotics of the discrete Green's function $\Phi_0$ on $\Z^2$ by Kenyon \cite{Ke}, we prove a crucial estimate for $\Phi_0*f$ for $f\in \ell^\infty_\sigma(\Z^2)$ with $\sigma>2;$ see Proposition~\ref{pr 2.1}. Then we apply the arguments of fixed point theorem to conclude the existence of finite-energy solutions by the smallness of $\kappa$. 

Consider the special case of the Liouville equation, i.e. $\beta=0$, 
\begin{equation}\label{eq 1.1-0}
 -\Delta u=   e^{\kappa u} \quad\
    {\rm in}\ \  \Z^2.\end{equation} By Theorem~\ref{teo 1}, we prove the existence of finite-energy solutions for the Liouville equation, resolving Problem~\ref{prob2}.

\begin{theorem}\label{thm:main0}
    For any $a>4\pi,$ there exists a solution $u_{a}$ to the Liouville equation \eqref{eq 1.1-0} with $\kappa=1$ satisfying 
\begin{align}\label{ep-1}
u_{a}(x)=-\frac{a}{2\pi}\ln|x|+c_a+O(|x|^{\frac{4\pi-a}{a+2\pi}}\big(\ln |x|\big)^{\frac{2\pi }{a+2\pi}}) 
\quad {\rm as}\ \,  x\to\infty,
\end{align}
where $c_a\in\R$ depends on $a.$ Moreover, $\int_{\Z^2} e^{u_a} =a<\infty.$
\end{theorem}


\begin{remark}
For the Liouville equation with $\kappa=1$ on $\R^2,$ by \eqref{eq:totcurv} it is known that any finite-energy solution satisfies $\int_{\R^2}e^{u}=8\pi$. In contrast, our result shows that on $\Z^2$ there exists a continuous family of finite-energy solutions whose total energy ranges over $(4\pi,\infty).$ Moreover, while any finite-energy solution in $\R^2$ has the asymptotic behavior$$u(x)\sim -2\ln|x|,\quad x\to \infty,$$ our solutions on $\Z^2$ exhibit the varying asymptotics, i.e. for all $c\in(2,\infty)$ there exists some solution
$$u(x)\sim -c\ln|x|,\quad x\to \infty.$$ These phenomena could be caused by the geometry of the lattice, e.g., Green's function of the Laplacian in $\Z^2$ is non-positive, which is a new feature. This provides rich results, exploring how the discrete nature of the graph interacts with the nonlinearities in the equation.
\end{remark}



 When $\varepsilon=-1,$  it is known that   the nonlinear term $ e^{\kappa u}$ is an absorption, and equation (\ref{eq 1.0}) turns to be 
\begin{equation}\label{eq 1.1-ab}
 -\Delta u+  e^{\kappa u} =\beta\delta_0\quad
    {\rm in}\ \  \Z^2. 
\end{equation} In this case, the solution to the equation satisfies the maximum principle. We adopt the framework of the fixed point theorem to prove the existence of  finite-energy solutions.

\begin{theorem}\label{teo 1-ab}
Let   $\kappa>0$,  $\alpha_0=\frac{4\pi}{\kappa}$ and  $\beta>\alpha_0$. 
 
$(i)$  For any $\alpha\in  \big(\alpha_0, \beta\big)$ problem (\ref{eq 1.1-ab})
has a solution ${\bf u}_\alpha$ satisfying 
\begin{align}\label{ep-1-ab}
{\bf u}_{\alpha}(x)=-\frac{\alpha}{2\pi}\ln|x|+{\bf d}_{\kappa,\alpha,\beta}+O(|x|^{\frac{4\pi-\alpha\kappa}{\alpha\kappa+2\pi}}\big(\ln |x|\big)^{\frac{2\pi }{\alpha\kappa+2\pi}}) 
\quad {\rm as}\ \, x\to\infty,
\end{align}
where ${\bf d}_{\kappa,\alpha,\beta}\in\R$ depends on $\alpha,\beta$ such that 
$${\bf d}_{\kappa,\alpha,\beta}\leq \frac1\kappa \ln(\beta-\alpha)-\frac{\gamma_0}{2}\alpha, $$ for $\gamma_0=\frac{1}{\pi}(\gamma_E+\frac12\ln 2)$ with the Euler constant $\gamma_E.$

$(ii)$   Problem (\ref{eq 1.1-ab})
has a solution ${\bf u}_{\alpha_0}$ satisfying 
\begin{align}\label{ep-1-0-ab}
{\bf u}_{\alpha_0} (x)=-\frac{2}{\kappa}\ln|x|-\frac{2}{\kappa}\ln\ln|x|+O(1)
\quad {\rm as}\ \, x\to\infty.
\end{align}

$(iii)$  the mapping $\alpha\in[\alpha_0,\beta) \mapsto {\bf u}_\alpha$ is strictly decreasing, continuous locally in $\Z^2$, i.e.
$${\bf u}_{\alpha_1}\geq  {\bf u}_{\alpha_2} \quad{\rm in}\ \, \Z^2\qquad{\rm for}\ \  \alpha_0\leq \alpha_1\leq \alpha_2< \beta, $$
$${\bf u}_{\tilde \alpha}=\lim_{\alpha\to\tilde \alpha} {\bf u}_\alpha\quad{\rm in}\ \, \Z^2\qquad{\rm for}\ \  \tilde\alpha\in (\alpha_0,\beta), $$
$${\bf u}_{\alpha_0}=\lim_{\alpha\to\alpha_0^+}{\bf u}_\alpha\quad{\rm in}\ \, \Z^2$$
 and  for $\alpha\in[\alpha_0,\beta)$
\begin{align}\label{ep-1-e-ab}
\int_{\Z^2} e^{\kappa {\bf u}_\alpha} dx=\beta-\alpha.  
\end{align}
 
   \end{theorem}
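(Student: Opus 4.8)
The plan is to recast \eqref{eq 1.1-ab} as a fixed-point equation anchored on the discrete Green's function $\Phi_0$ (with $-\Delta\Phi_0=\delta_0$, $\Phi_0\le 0$ and, by Kenyon's asymptotics, $\Phi_0(x)=-\frac{1}{2\pi}\ln|x|-\frac{\gamma_0}{2}+o(1)$), to use the absorption sign to run comparison and maximum-principle arguments for part~$(iii)$, and to treat the critical value $\alpha=\alpha_0$ by a limiting barrier analysis. For part~$(i)$ I would fix $\alpha\in(\alpha_0,\beta)$ and look for a solution of the form $\mathbf u_\alpha=\alpha\Phi_0+d+w$ with $w$ in a weighted space $\ell^\infty_\sigma(\Z^2)$ that decays at infinity. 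Substituting into \eqref{eq 1.1-ab} gives $-\Delta w=(\beta-\alpha)\delta_0-e^{\kappa\mathbf u_\alpha}$; requiring $w$ to decay forces the total mass of the right-hand side to vanish, which is exactly the energy identity $\sum_{\Z^2}e^{\kappa\mathbf u_\alpha}=\beta-\alpha$ of \eqref{ep-1-e-ab}, and then $w=\Phi_0*\big[(\beta-\alpha)\delta_0-e^{\kappa\mathbf u_\alpha}\big]$.

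For a given profile $w$ I would fix the constant $d=d(w)$ by imposing this mass identity, i.e. $e^{\kappa d}\sum_{y}e^{\kappa(\alpha\Phi_0(y)+w(y))}=\beta-\alpha$; this is uniquely solvable \emph{because} $\alpha>\alpha_0=\frac{4\pi}{\kappa}$ makes $\sum_y e^{\kappa\alpha\Phi_0(y)}<\infty$ (indeed $e^{\kappa\alpha\Phi_0}\sim|x|^{-\kappa\alpha/2\pi}$ is summable on $\Z^2$ iff $\kappa\alpha/2\pi>2$). This is the precise point where the hypothesis $\alpha>\alpha_0$ enters, and the resulting formula for $d(w)$ yields the stated bound on $\mathbf d_{\kappa,\alpha,\beta}$ once combined with a maximum-principle estimate. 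Setting $\mathcal T(w):=\Phi_0*\big[(\beta-\alpha)\delta_0-e^{\kappa(\alpha\Phi_0+d(w)+w)}\big]$, the density has zero total mass and decay exponent $\kappa\alpha/2\pi>2$, so Proposition~\ref{pr 2.1} bounds $\mathcal T(w)$ in a weighted norm and produces the error term in \eqref{ep-1-ab}, whose power $\frac{4\pi-\alpha\kappa}{\alpha\kappa+2\pi}$ is negative precisely for $\alpha>\alpha_0$. I would then choose a closed convex ball in $\ell^\infty_{\sigma_1}(\Z^2)$ invariant under $\mathcal T$ and invoke the compact embedding $\ell^\infty_{\sigma_1}\hookrightarrow\ell^\infty_{\sigma_2}$ of Lemma~\ref{lm 2.1} to apply Schauder's fixed point theorem, giving $w$ and hence $\mathbf u_\alpha$ with asymptotics \eqref{ep-1-ab}.

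Part~$(iii)$ is where the absorption sign pays off. Given $\alpha_1<\alpha_2$, the difference $w=\mathbf u_{\alpha_1}-\mathbf u_{\alpha_2}$ solves $-\Delta w+c\,w=0$ with $c(x)=\kappa e^{\kappa\xi(x)}>0$ (mean value theorem), and $w(x)=\frac{\alpha_2-\alpha_1}{2\pi}\ln|x|+o(\ln|x|)\to+\infty$; the discrete maximum principle then forces $w>0$, giving strict monotonicity, and the same comparison with difference tending to $0$ gives uniqueness of $\mathbf u_\alpha$ for each $\alpha$ (so that the map $\alpha\mapsto\mathbf u_\alpha$ is well defined). Local continuity and the one-sided limits follow from monotone convergence together with this uniqueness and the stability of the fixed point.

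The hard part will be part~$(ii)$, the critical case $\alpha=\alpha_0$, where $\kappa\alpha_0/2\pi=2$ and $|x|^{-2}$ just fails to be summable on $\Z^2$: the construction above degenerates because $\sum_y e^{\kappa\alpha_0\Phi_0(y)}$ diverges logarithmically and the error exponent in \eqref{ep-1-ab} vanishes. I would obtain $\mathbf u_{\alpha_0}$ as the monotone limit $\lim_{\alpha\to\alpha_0^+}\mathbf u_\alpha$, for which the main difficulty is a uniform upper barrier guaranteeing the limit is finite and of mass $\beta-\alpha_0$. The refined asymptotics \eqref{ep-1-0-ab}, with the extra $-\frac{2}{\kappa}\ln\ln|x|$, reflect exactly the logarithmic correction needed to make $e^{\kappa\mathbf u_{\alpha_0}}\sim|x|^{-2}(\ln|x|)^{-2}$ summable; I would establish it by constructing sub- and supersolutions of the form $-\frac{2}{\kappa}\ln|x|-\frac{2}{\kappa}\ln\ln|x|\pm C$ and matching the discrete Laplacian against the absorption term near infinity. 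This borderline asymptotic matching, rather than the subcritical existence or the comparison arguments, is the principal obstacle.
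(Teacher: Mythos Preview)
Your decomposition $\mathbf u_\alpha=\alpha\Phi_0+d(w)+w$, the mass normalisation fixing $d(w)$, the comparison arguments for part~$(iii)$, and the barrier ansatz $-\frac{2}{\kappa}\ln|x|-\frac{2}{\kappa}\ln\ln|x|\pm C$ for part~$(ii)$ all match the paper. The gap is in part~$(i)$: Schauder's theorem will not close here without a smallness assumption on $\kappa$. Producing a ball in $\ell^\infty_{\sigma_1}$ invariant under $\mathcal T$ leads to an inequality of the shape $b_\sigma(\beta-\alpha)\big(1+C_\alpha e^{2\kappa r_0}\big)\le r_0$, exactly the obstruction encountered in the source case of Theorem~\ref{teo 1}; it is solvable only for $\kappa$ below a threshold depending on $\alpha$, whereas Theorem~\ref{teo 1-ab} is asserted for every $\kappa>0$.

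The paper's remedy is to replace Schauder by Schaefer's (Leray--Schauder) theorem. Continuity and compactness of $\mathcal T$ are as you describe, but the input one needs is an \emph{a priori} bound on $\{v:v=t\,\mathcal T(v),\ t\in[0,1]\}$, and this is where the absorption sign is first exploited---already in part~$(i)$, not only in part~$(iii)$. If such a $v$ attains a positive maximum at $x_2$ with $\Delta v(x_2)<0$, the equation $-\Delta v=t\big(g-Ke^{\kappa v+c_v}\big)$ forces $K(x_2)e^{\kappa v(x_2)+c_v}<g(x_2)$, hence $\|e^{\kappa v+c_v}\|_{\ell^\infty}\le\|gK^{-1}\|_{\ell^\infty(\mathrm{supp}\,K)}=\beta-\alpha$; the case $v<0$ everywhere is handled separately via the parabolicity of $\Z^2$ (Theorem~\ref{thm:para}). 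This a priori estimate simultaneously removes the smallness restriction and delivers the bound $\mathbf d_{\kappa,\alpha,\beta}\le\frac1\kappa\ln(\beta-\alpha)-\frac{\gamma_0}{2}\alpha$. For part~$(ii)$ the paper reverses your order: it first builds $\mathbf u_{\alpha_0}$ by Perron's method between the barriers $u_d=\alpha_0\Phi_0-\frac{2}{\kappa}\Lambda_0+d$ (with $\Lambda_0(x)=\ln\ln(\tfrac12+|x|^2)$ and an explicit Taylor computation of $\Delta\Lambda_0$), and only afterwards identifies it with $\lim_{\alpha\to\alpha_0^+}\mathbf u_\alpha$; either order works, but the barrier construction is unavoidable in both.
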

 
 We remark that the constant ${\bf d}_{\alpha,\beta}\to-\infty$ as $\alpha \to \beta^-$. On the other hand, ${\bf d}_{\alpha,\beta}$ remains bounded, and the decay rate of $|x|^{\frac{4\pi-\alpha\kappa}{\alpha\kappa+2\pi}}\big(\ln |x|\big)^{\frac{2\pi }{\alpha\kappa+2\pi}}$ vanishes for $\alpha\to \alpha_0^+$.  Inspired by the continuous model,  when $\alpha=\alpha_0$  the solution $u_{\alpha_0}$ is called \emph{extremal solutions}.  Note that in this critical case, by  \eqref{ep-1-0-ab}, the solution exhibits a double logarithmic asymptotic behavior, which also appears in the continuous setting. Here, we obtain {\it a layer structure} of the set of solutions $\{{\bf u}_\alpha\}_{\alpha\in[\alpha_0,\beta)}$.  
  \smallskip
  
  {
  
 Finally,   we  add a remark to address the dependence of the problems on the parameters $\varepsilon, \kappa$ and $\beta.$ 

 \begin{remark}
 (1) For any $\varepsilon \in (-1,0) \cup (0,1)$, define the rescaling $v = \frac{1}{|\varepsilon|} u$. Then the original Kazdan–Warner type problem (\ref{eq 1.0})  
transforms into  
$$
-\Delta v = \operatorname{sgn}(\varepsilon) \, e^{\tilde{\kappa} v} + \tilde{\beta} \, \delta_0 \quad \text{in } \mathbb{Z}^2,
$$  
where $\tilde{\kappa} = |\varepsilon| \kappa$ and $\tilde{\beta} = \frac{\beta}{|\varepsilon|}$. This rescaled problem falls within the scope of our main results.  

(2) It is significantly more challenging—both analytically and structurally to consider the non-topological solutions of (\ref{eq 1.0})  —if either the constant $\kappa$ is replaced by a nonnegative vertex-dependent function $\kappa: V \to [0,+\infty)$, or the point source $\beta \delta_0$ is replaced by a general nonnegative function $\beta: V \to [0,+\infty)$. We leave these as open questions for future research.
\end{remark}
 }

The structure of this paper is as follows. In Section 2, we investigate the fundamental properties, including the comparison principle, embedding results, and related estimates. In Section 3, we establish the existence of solutions in the source case using fixed-point theory.    Finally, Section 4 focuses on the absorption case, where we employ the Leray-Schauder theorem to prove the existence of  finite-energy solutions and utilize the method of super and sub-solutions to obtain the extremal solution.

    \setcounter{equation}{0}
\section{ Preliminaries  }

\subsection{Comparison principle and Embedding results}
 
Let $G=(V,E)$ be a (possibly infinite) simple, locally finite, and undirected graph. Two vertices $x,y$ are called neighbours, denoted by $x\sim y,$ if there is an edge connecting $x$ and $y.$ A subset $\Omega\subset V$ is called connected if for any $x,y\in \Omega,$ there is a path $\{x_i\}_{i=0}^n\subset \Omega$ from $x$ to $y,$ i.e. $x_i\sim x_{i+1}$ for any $0\leq i\leq n-1$ and $x_0=x,x_n=y.$ The combinatorial distance between two vertices $x$ and $y,$ $d(x,y),$ is the length of the shortest path connecting $x$ and $y$. We usually write
$x\to \infty$ if $d(x,x_0)\to \infty$ for a fixed vertex $x_0.$  
For $\Omega\subset V,$ we denote by $\delta\Omega:=\{y\in V\setminus \Omega: \exists x\in \Omega \ \mathrm{s.t.}\ y\sim x\}$ the boundary of $\Omega.$ We write $\overline{\Omega}:=\Omega\cup\delta\Omega.$  Moreover, we use the following notations: $|x-\bar x|_{Q}=|x_1-\bar x_1|+|x_2-\bar x_2|$,   $|x-\bar x|:=\sqrt{(x_1-\bar x_1)^2+(x_2-\bar x_2)^2}$ $$ Q_r(x_0)=\big\{x\in\Z^2:\, |x-x_0|_{_Q}\leq r  \big\},\qquad B_r(x_0)=\big\{x\in\Z^2:\, |x-x_0| \leq r \big\}.$$
For any $\Omega\subset V$ and any $u:\overline{\Omega}\to \R,$ the Laplacian of $u$ is defined as  $$\Delta u(x):=\sum_{y\sim x}\big(u(y)-u(x)\big)\quad\ \text{ for all }x\in\Omega.$$
 
 The following maximum principle is well known in the continuous setting, for which we give the proof in the discrete setting.
 \begin{theorem}\label{thm:max}
     For a graph $G=(V,E)$ and a (possibly infinite) connected subset $\Omega\subset V$   verifying
   either $\delta \Omega\not=\emptyset$ or $\Omega$ is unbounded,  
     if  $u: \overline{\Omega}\to \R$ satisfies
\begin{equation}\label{eq 2.1 cm}
\left\{
\begin{array}{lll}
-\Delta u+c u     \geq  0   \quad
   &{\rm in}\ \  \Omega , \\[1mm]
 \phantom{ ----  }
u\geq 0 \quad &{\rm   in}\ \ \,    \delta\Omega, \\[1mm]
\liminf\limits_{d(x,0)\to \infty, x\in \Omega}u(x)\geq 0,
 \end{array}
 \right.
\end{equation} where $c:\Omega\to[0,\infty),$
then $u\geq 0$ in $\Omega$.  Furthermore, $u\equiv 0$ in $\Omega$ or $u>0$ in $\Omega$. 
 \end{theorem}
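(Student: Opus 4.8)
The plan is to argue by contradiction using a discrete minimum-principle propagation argument, exploiting that $-\Delta+c$ with $c\geq 0$ is diagonally dominant at each vertex. Writing the inequality $-\Delta u+cu\geq 0$ at a vertex $x$ explicitly, it reads
\[
\big(\deg(x)+c(x)\big)u(x)\;\geq\;\sum_{y\sim x}u(y),
\]
so the value at $x$ dominates the sum over its neighbours whenever $u(x)\leq 0$. First I would set $m:=\inf_{\Omega}u$ and suppose, towards a contradiction, that $m<0$.

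The first step is to locate a vertex where the infimum is attained, which is the only place the hypotheses are used in an essential way beyond the algebraic inequality. Since $\liminf_{d(x,0)\to\infty}u(x)\geq 0>m$, there is a radius $R$ with $u(x)>\tfrac{m}{2}>m$ for every $x\in\Omega$ with $d(x,0)>R$. Hence the infimum can only be approached inside the ball $\{x\in\Omega:\ d(x,0)\leq R\}$, which is finite by local finiteness of $G$; therefore $m$ is attained at some $x_0\in\Omega$.

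The second step is the propagation. Evaluating the inequality at $x_0$ and using $u(y)\geq m$ for $y\in\Omega$ together with $u(y)\geq 0>m$ for $y\in\delta\Omega$, I get $\sum_{y\sim x_0}u(y)\geq \deg(x_0)\,m$, while the left-hand side equals $\big(\deg(x_0)+c(x_0)\big)m$. Comparing the two forces $c(x_0)\,m\geq 0$; since $m<0$ and $c(x_0)\geq 0$ this yields $c(x_0)=0$ and, moreover, equality throughout the chain, which forces $u(y)=m$ for every neighbour $y$ of $x_0$ and rules out any neighbour lying in $\delta\Omega$ (such a $y$ would satisfy $u(y)\geq 0>m$). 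Thus every neighbour of $x_0$ again lies in $\Omega$ and realizes the infimum $m$. Iterating this along paths and using connectedness of $\Omega$, I conclude $u\equiv m$ on $\Omega$ and that no vertex of $\Omega$ is adjacent to $\delta\Omega$.

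Finally I would discard this using the structural hypothesis on $\Omega$. If $\delta\Omega\neq\emptyset$, then by definition some vertex of $\Omega$ is adjacent to a boundary vertex, contradicting the conclusion just obtained; if instead $\Omega$ is unbounded, then $u\equiv m<0$ contradicts $\liminf_{d(x,0)\to\infty}u\geq 0$. Either way $m\geq 0$, i.e. $u\geq 0$ on $\Omega$. For the strong alternative I would rerun the propagation at a hypothetical zero $x_0$ with $u(x_0)=0$: now every $u(y)\geq 0$, so $0=\big(\deg(x_0)+c(x_0)\big)u(x_0)\geq\sum_{y\sim x_0}u(y)\geq 0$ forces $u(y)=0$ on all neighbours, and connectedness propagates the zero set to all of $\Omega$. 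Hence either $u\equiv 0$ or $u>0$ on $\Omega$. I expect the main obstacle to be precisely the first step: on an unbounded $\Omega$ the infimum need not a priori be attained, and it is the $\liminf$ condition that confines it to a finite ball, producing a genuine minimizing vertex to seed the propagation.
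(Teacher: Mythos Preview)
Your proposal is correct and follows essentially the same approach as the paper: locate a vertex realizing the negative infimum via the $\liminf$ condition and local finiteness, then use the differential inequality at that vertex to propagate the minimum to all neighbours, and derive a contradiction from connectedness together with the structural hypothesis on $\Omega$. Your write-up is in fact more explicit than the paper's (which compresses the propagation step to ``$\Delta u(x)\leq c(x)u(x)\leq 0$; since $u(x)=\min u$, this proves the claim''), but the logic is identical.
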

\begin{proof}
 Without loss of generality, we prove the result for an infinite subset $\Omega.$ Since $\Omega$ is connected, so is $\overline{\Omega}.$ Suppose that the first assertion is not true, i.e. there exists $x_0\in\Omega$ such that $u(x_0)<0.$ Since $\liminf\limits_{x\to \infty, x\in \Omega}u(x)\geq 0
$ and $u|_{\delta\Omega}\geq 0,$ then $\inf_{x\in \overline{\Omega}}u<0$ and $$A:=\{x\in \overline\Omega:u(x)=\inf_{x\in\overline\Omega}u\}\neq\emptyset,\quad A\subset \Omega.$$ For any $x\in A,$ we claim that $y\in A$ for any $y\sim x, y\in \overline\Omega.$ By the equation,
    $$\Delta u(x)\leq c(x)u(x)\leq 0.$$ Since $u(x)=\min_{x\in \overline\Omega}u,$ this proves the claim. By the connectedness of $\overline\Omega,$ $A=\overline\Omega,$ which contradicts $A\subset \Omega.$ This proves the first assertion.

    For the second assertion, if there exists $x_0\in\Omega$ such that $u(x_0)=0,$ then by the same argument above, one can show that $u\equiv 0$ on $\overline\Omega$.     This proves the result. 
\end{proof}

\begin{remark}
When $V$ is finite, connected and $\Omega=V$, the comparison principle holds when the boundary conditions  are replaced by $u(x_0)\geq0$ for some $x_0\in\Omega$. 
\end{remark}
 
 {
  
  \begin{corollary}\label{cr com}
     For a graph $G=(V,E)$ and a (possibly infinite) connected subset $\Omega\subset V$   verifying
   either $\delta \Omega\not=\emptyset$ or $\Omega$ is unbounded and   
     if  $u, v: \overline{\Omega}\to \R$ satisfy
\begin{equation}\label{eq 2.1 cm+}
\left\{
\begin{array}{lll}
-\Delta u+f(x,u)     \geq  -\Delta v+f(x,v),   \quad
   &{\rm in}\ \  \Omega , \\[1mm]
 \phantom{ -----  }
u-v\geq 0 \quad &{\rm   in}\ \ \,    \delta\Omega, \\[1mm]
\liminf\limits_{d(x,0)\to \infty, x\in \Omega}\big(u(x)-v(x)\big)\geq 0,
 \end{array}
 \right.
\end{equation}
  where $f(x,t):\Omega\times \R \to[0,\infty)$ and  $\partial_t f(x,t)\geq 0$ for any $x\in \Omega$. 
Then $u\geq v$ in $\Omega$.    
 \end{corollary}
\begin{proof} Let $w=u-v$, then 
$$-\Delta w +c(x) w\geq 0\quad {\rm in}\ \, \Omega, $$
where 
$$c(x)=\left\{
\begin{array}{lll}
 \frac{f(x,u(x))-f(x,v(x)) }{u(x)-v(x)}    \quad
   &{\rm if}\ \ u(x)\not=v(x), \\[2mm]
 \phantom{   }
 \partial_t f(x,u(x))  \quad &{\rm if}\ \ u(x)=v(x). 
 \end{array}
 \right.
$$
which is a nonnegative function in $\Omega$. 
 We now apply Theorem \ref{thm:max} to obtain that $w\geq0$ in $\Omega$. 
\end{proof}
}
 
  Given $\Omega\subset \Z^2$, without any confusion,  we use the notation 
 $$\int_{ \Omega} f(x)dx =\sum_{x\in \Omega} f(x)\quad{\rm for}\ \  f: \Omega\to \R. $$
 For the integer lattice $\Z^2,$ we denote by $\ell^\infty(\Z^2)$ the space of bounded functions on $\Z^2$ with the norm $\|\cdot\|_{\ell^\infty},$ and for any $\sigma\in \R,$ we define the weighted $\ell^\infty_\sigma$ 
 \begin{equation}\label{eq:wsob}
     \ell^\infty_\sigma(\Z^2):=\big\{f:\Z^2\to\R: \|f\|_{\ell^\infty_\sigma}<\infty\big\}, 
 \end{equation}
 where 
$$\|f\|_{\ell^\infty_\sigma}:=\sup_{x\in \Z^2}\big(|f(x)|(1+|x|)^\sigma\big)$$
 is the weighted $\ell^\infty_\sigma$ norm. Note that  $\ell^\infty(\Z^2)=\ell^\infty_0(\Z^2).$

For $x=(x_1,x_2)\in \R^2,$ we write $|x|_p:=(|x_1|^p+|x_2|^p)^{\frac1p}$ for $p\in [1,\infty)$ and $|x|_\infty:=\max\{|x_1|,|x_2|\}.$  We always write $|x|=|x|_2.$
We denote by $B_r(x):=\{x\in \R^2: |x|\leq r\}$ the closed disc of radius $r$ centered at $x,$ and by $Q_r(x):=\{x\in \R^2: |x|_\infty\leq \frac{r}{2}\}$ the square of side length $r$ centered at $x.$ For $x=0,$ we write $B_r=B_r(0)$ for simplicity.

 \smallskip


 \begin{lemma}\label{lm 2.1}
 For $\sigma_1,\sigma_2\in \R$ with  $\sigma_1>\sigma_2,$ 
$$\ell^\infty_{\sigma_1}(\Z^2)\subset \ell^\infty_{\sigma_2}(\Z^2),\quad \mathrm{and} $$
$$\|u\|_{\ell^\infty_{\sigma_2}}\leq \|u\|_{\ell^\infty_{\sigma_1}},\quad \forall\ u\in \ell^\infty_{\sigma_1}(\Z^2).$$
Moreover, the embedding $\ell^\infty_{\sigma_1}(\Z^2)\hookrightarrow \ell^\infty_{\sigma_2}(\Z^2)$ is compact.

 \end{lemma}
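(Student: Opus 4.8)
The first two assertions are immediate, so I would dispose of them first. Since $1+|x|\geq 1$ for all $x\in\Z^2$ and $\sigma_1>\sigma_2$, the elementary inequality $(1+|x|)^{\sigma_2}\leq (1+|x|)^{\sigma_1}$ holds pointwise, whence $|f(x)|(1+|x|)^{\sigma_2}\leq |f(x)|(1+|x|)^{\sigma_1}$ for every $x$. Taking the supremum over $x\in\Z^2$ gives $\|f\|_{\ell^\infty_{\sigma_2}}\leq \|f\|_{\ell^\infty_{\sigma_1}}$, and the inclusion $\ell^\infty_{\sigma_1}(\Z^2)\subset \ell^\infty_{\sigma_2}(\Z^2)$ follows at once.

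For compactness I would run a diagonal-plus-tail argument. First, take an arbitrary sequence $\{u_n\}$ with $\|u_n\|_{\ell^\infty_{\sigma_1}}\leq M$. For each fixed vertex $x$ one has $|u_n(x)|\leq M(1+|x|)^{-\sigma_1}$, so the scalar sequence $\{u_n(x)\}_n$ is bounded; since $\Z^2$ is countable, a diagonal extraction yields a subsequence (not relabelled) converging pointwise to some $u:\Z^2\to\R$. Letting $n\to\infty$ in $|u_n(x)|(1+|x|)^{\sigma_1}\leq M$ shows $\|u\|_{\ell^\infty_{\sigma_1}}\leq M$, so the limit lies in $\ell^\infty_{\sigma_1}(\Z^2)$.

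The step I expect to be the crux is the passage from pointwise convergence to convergence in the weaker weighted norm, and this is exactly where the strict inequality $\sigma_1>\sigma_2$ enters. Given $\epsilon>0$, I would split $\Z^2$ into $\{|x|>R\}$ and $\{|x|\leq R\}$. On the outer region, using $\sigma_2-\sigma_1<0$,
$$|u_n(x)-u(x)|(1+|x|)^{\sigma_2}\leq \big(\|u_n\|_{\ell^\infty_{\sigma_1}}+\|u\|_{\ell^\infty_{\sigma_1}}\big)(1+|x|)^{\sigma_2-\sigma_1}\leq 2M(1+R)^{\sigma_2-\sigma_1},$$
which can be made smaller than $\epsilon/2$ by choosing $R$ large, \emph{uniformly in $n$}. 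On the remaining set $\{|x|\leq R\}$, which contains only finitely many vertices and on which the weight $(1+|x|)^{\sigma_2}$ is bounded, pointwise convergence upgrades to uniform convergence, so the corresponding supremum is below $\epsilon/2$ for all sufficiently large $n$. Adding the two contributions gives $\|u_n-u\|_{\ell^\infty_{\sigma_2}}<\epsilon$ for all large $n$, so the extracted subsequence converges in $\ell^\infty_{\sigma_2}(\Z^2)$ and the embedding is compact. The only genuine content lies in the tail estimate on $\{|x|>R\}$: the decaying weight factor $(1+|x|)^{\sigma_2-\sigma_1}$ furnishes equismallness of the tails independently of $n$, which is precisely what a stronger weight buys and what fails when $\sigma_1=\sigma_2$.
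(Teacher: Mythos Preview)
Your proof is correct and follows essentially the same approach as the paper: both dispose of the norm inequality by the pointwise bound $(1+|x|)^{\sigma_2}\leq (1+|x|)^{\sigma_1}$, then prove compactness by extracting a pointwise convergent subsequence via diagonalization and splitting $\Z^2$ into a finite ball (where pointwise convergence is uniform) and its complement (where the factor $(1+|x|)^{\sigma_2-\sigma_1}$ provides a uniform tail bound).
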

\noindent{\bf Proof. }  The first assertion is obvious from the definition of space $\ell^\infty_{\sigma}(\Z^2)$. We prove the second assertion, i.e. the compactness of the embedding $\ell^\infty_{\sigma_1}(\Z^2)\hookrightarrow \ell^\infty_{\sigma_2}(\Z^2)$. Let $\{\varphi_n\}_{n\in\N}\subset \ell^\infty_{\sigma_1}(\Z^2)$ such that $\|\varphi_n\|_{\ell^\infty_{\sigma_1}}\leq 1.$
Note that for any $r\geq 1,$ $$\sup_n\max_{B_r\cap \Z^2}|\varphi_n|\leq C(r,\sigma_1)<\infty.$$ Hence, up to a subsequence, still denoted by $\varphi_n,$ there is a function $\varphi$ such that $\varphi_n\to \varphi$ pointwise on $\Z^2$ as $n\to\infty.$ One easily sees that
$$\|\varphi\|_{\ell^\infty_{\sigma_1}}\leq 1.$$ We need to prove that $\varphi_n\to \varphi$ in $\ell^\infty_{\sigma_2}(\Z^2)$ as $n\to\infty.$ This will prove the compactness.

For any $\epsilon>0$,  there exists $N_0\gg1$ such that for $|x|>N_0,$
\begin{align*}
 \Big( |\varphi_n(x)|+ |\varphi(x)|\Big) (1+|x|)^{\sigma_2}&= \Big( |\varphi_n(x)|+ |\varphi(x)|\Big)(1+|x|)^{\sigma_1} (1+|x|)^{\sigma_2-\sigma_1}
  \\& \leq \Big(\|\varphi_n\|_{\ell^\infty_{\sigma_1}}+\|\varphi\|_{\ell^\infty_{\sigma_1}}\Big)(1+N_0)^{\sigma_2-\sigma_1}   
   \\& \leq 2 \epsilon. 
\end{align*}
Moreover, for sufficiently large $n,$ \begin{align*}
  |\varphi_n(x)-\varphi(x)| &\leq  (1+N_0)^{-\sigma_2} \epsilon,\  \text{$\forall\  x\in B_{N_0}\cap \Z^2$}. 
\end{align*}
Therefore, we obtain that 
\begin{align*}
\sup_{x\in\Z^2} \Big( |\varphi_n(x)-\varphi(x)| (1+|x|)^{\sigma_2}\Big)&\leq (1+N_0)^{\sigma_2} \sup_{x\in  B_{N_0}\cap \Z^2} \Big( |\varphi_n(x)-\varphi(x)|\Big)
\\&\qquad + \sup_{x\in \Z^2\setminus B_{N_0}} \Big(\big( |\varphi_n(x)|+ |\varphi(x)|\big) (1+|x|)^{\sigma_2} \Big)
  \\& \leq  3\epsilon.
\end{align*}
This proves the convergence in $\ell^\infty_{\sigma_2}(\Z^2),$ and yields the result.   \hfill$\Box$\medskip

The following is an important property for the analysis on the graph $\Z^2,$ which follows from the parabolicity or the recurrence of the simple random walk on $\Z^2;$ see e.g. \cite{Gri18}.
\begin{theorem}\label{thm:para}
    Let $u:\Z^2\to \R$ satisfy $\Delta u\leq 0$ and $u\geq 0$ on $\Z^2.$ Then $u$ is constant.
\end{theorem}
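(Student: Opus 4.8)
The plan is to prove Theorem~\ref{thm:para} by combining parabolicity of $\Z^2$ with a Liouville-type argument. The hypothesis gives a nonnegative superharmonic function, so the natural strategy is to show it must in fact be harmonic, and then invoke the recurrence of the simple random walk to conclude it is constant. First I would reduce to the harmonic case. Given $u \geq 0$ with $\Delta u \leq 0$, I set $f := -\Delta u \geq 0$ and observe that $u$ is a nonnegative superharmonic function dominating the potential $\Phi_0 * f$ built from $f$ against the (recurrent) Green's structure on $\Z^2$. On a recurrent graph, any nonnegative superharmonic function that is not harmonic would force an infinite potential, contradicting finiteness of $u$; so the first step is to argue $f \equiv 0$, i.e.\ $\Delta u \equiv 0$.

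The cleanest route avoids appealing to Green's functions (which on $\Z^2$ are unbounded) and instead uses the probabilistic meaning of parabolicity directly. Let $(X_n)_{n\geq 0}$ be the simple random walk on $\Z^2$ started at a fixed vertex $x_0$. Since $\Delta u \leq 0$, the process $M_n := u(X_n)$ is a nonnegative supermartingale. By the supermartingale convergence theorem, $M_n$ converges almost surely to a finite limit $M_\infty \geq 0$. On the other hand, $\Z^2$ is recurrent: the walk visits every vertex infinitely often almost surely. Hence along the trajectory the sequence $u(X_n)$ revisits the value $u(y)$ infinitely often for every vertex $y$, which forces all these values to coincide with the common limit $M_\infty$. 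Therefore $u$ is constant on the (connected) graph $\Z^2$.

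The main obstacle is making the ``revisits every vertex infinitely often'' step rigorous and quantitative rather than heuristic. More precisely, I must ensure that the almost-sure convergence of the supermartingale is genuinely incompatible with $u$ being nonconstant. The argument is: fix two vertices $y, z$; by recurrence the walk visits each infinitely often with probability one, so almost surely the subsequences $u(X_{n_k}) \to u(y)$ and $u(X_{m_j}) \to u(z)$ both extract from a convergent sequence, whence $u(y) = M_\infty = u(z)$. Since $y, z$ are arbitrary and $\Z^2$ is connected, $u$ is constant. I would state explicitly the two inputs from the theory of random walks: the supermartingale property of $u(X_n)$ (immediate from $\Delta u(x) = \sum_{y \sim x}(u(y) - u(x)) \leq 0$, i.e.\ $\frac14\sum_{y\sim x} u(y) \leq u(x)$, so $\mathbb{E}[u(X_{n+1}) \mid X_n] \leq u(X_n)$) and the recurrence of $\Z^2$ (the classical Pólya theorem, which is exactly the parabolicity cited from \cite{Gri18}).

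An alternative, purely analytic proof worth mentioning as a fallback relies on the strong maximum principle together with parabolicity: one can show that a bounded-below superharmonic function on a parabolic graph attains its infimum, whereupon Theorem~\ref{thm:max}-style reasoning propagates the minimum to all of $\Z^2$; but since $u$ need not be bounded above this requires more care, and the supermartingale argument is more robust. I would therefore present the probabilistic proof as the primary one, keeping the statement short by citing \cite{Gri18} for recurrence and the standard supermartingale convergence theorem.
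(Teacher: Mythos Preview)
Your proof is correct. The supermartingale argument is standard and sound: $u(X_n)$ is a nonnegative supermartingale because $\Delta u\le 0$ gives $\mathbb{E}[u(X_{n+1})\mid X_n]\le u(X_n)$; Doob's theorem yields almost-sure convergence to some $M_\infty$; and P\'olya's recurrence of the simple random walk on $\Z^2$ forces $u(y)=M_\infty$ for every vertex $y$, so $u$ is constant.

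As for comparison with the paper: there is nothing to compare. The paper does not prove Theorem~\ref{thm:para} at all; it simply records the statement and cites \cite{Gri18}, noting that it follows from parabolicity/recurrence of $\Z^2$. Your probabilistic argument is precisely one of the standard ways to extract the Liouville property from recurrence, and would be an appropriate proof to include if one wanted the paper to be self-contained on this point. The analytic alternative you sketch (nonnegative superharmonic functions on a parabolic graph attain their infimum, then the strong minimum principle propagates) is also viable and is in fact the more common formulation in the graph-Laplacian literature; either route is acceptable here.
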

 
\subsection{Some useful estimates}
The convolution of two functions $f,g: \Z^2\to\R$ is defined as 
$$\big(f\ast g\big)(x)= \int_{\Z^2} f(y-x) g(y)dy \quad {\rm for}\ \, x\in\Z^2,$$
whenever it makes sense.

We recall the asymptotic behavior of fundamental solution of 
 \begin{equation}\label{eq 2.1}
\left\{
\begin{array}{lll}
-\Delta u = \delta_0 \quad
   &{\rm in}\ \  \Z^2, \\[2mm]
 \phantom{ \   }
  u(0)=0,
 \end{array}
 \right.
\end{equation} which is Green's function on $\Z^2.$
It was proven by \cite[Theorem 7.3]{Ke} that the problem (\ref{eq 2.1}) has a unique solution 
  $\Phi_0$   satisfying 
 \begin{equation}\label{fund-1}
  \Phi_0(0)=0,\quad \Phi_0(x)= -\frac1{2\pi}\ln |x|-\frac{\gamma_0}{2}+O(|x|^{-1})\quad {\rm as}\ \ x\to\infty,
  \end{equation}
 where $\gamma_0=\frac{1}{\pi}(\gamma_E+\frac12\ln 2)$ with the Euler constant $\gamma_E.$   
 Moreover, we have $\Phi_0<0$ on $\Z^2\setminus\{0\}.$ Hence, there exists a universal constant $c_1\geq 1$ such that for any $x\in \Z^2,$
 \begin{equation}\label{eq:q1}-\frac1{2\pi} \ln(1+ |x|)-c_1\leq   \Phi_0(x) \leq  -\frac1{2\pi} \ln(1+ |x|)+c_1,\quad \mathrm{and}\end{equation}

 \begin{equation}\label{fund-2}
  -c_1 \ln(1+ |x|) \leq   \Phi_0(x) \leq  -\frac1{c_1} \ln(1+ |x|). 
  \end{equation}

\begin{proposition}\label{pr 2.1}
Let $f\in \ell^\infty_m(\Z^2)$ with  $m>2$ satisfy that 
\begin{align}\label{sst-1}
\int_{\Z^2} f(x)dx =0. 
\end{align}
  
Then there exists a universal constant $c_0>1$ such that 
 \begin{align}\label{sst-2}
 \big|\big(\Phi_0\ast f\big)(x)\big|\leq \frac{c_0^m }{(m-2)^4} \|f\|_{\ell^\infty_m } (e+|x|)^{\frac{2-m}{m+1}} \big(\ln(e+|x|)\big)^{\frac{1}{m+1}},\quad \forall\ x\in\Z^2.   
\end{align}

\end{proposition}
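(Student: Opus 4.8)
The plan is to exploit the cancellation encoded in the hypothesis $\int_{\Z^2}f\,dx=0$, which is precisely what removes the leading logarithmic growth of the Green's function. Since $\sum_{y}f(y)=0$, I would first rewrite, for every $x$,
$$(\Phi_0\ast f)(x)=\sum_{y\in\Z^2}\Phi_0(x-y)f(y)=\sum_{y\in\Z^2}\big[\Phi_0(x-y)-\Phi_0(x)\big]f(y),$$
using the symmetry $\Phi_0(x-y)=\Phi_0(y-x)$. Without the subtraction, the term $\Phi_0(x-y)$ for $y$ near the origin has size $\sim\frac1{2\pi}\ln|x|$, and this is the obstruction to decay; the mean-zero condition kills exactly this constant baseline, so the effective kernel becomes the difference $\Phi_0(x-y)-\Phi_0(x)$, which I will show is small whenever $|y|\ll|x|$.

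Next I would fix a free scale $\rho\in[1,|x|/2]$, to be optimized at the end, and split the sum into the near-origin part $A=\{|y|\le\rho\}$ and the far part $B=\{|y|>\rho\}$. On $A$ both $x-y$ and $x$ lie in the asymptotic regime of \eqref{fund-1}, so the refined expansion gives $|\Phi_0(x-y)-\Phi_0(x)|\le C(1+|y|)/|x|$; bounding $|f|\le\|f\|_{\ell^\infty_m}$ and counting the $O(\rho^2)$ lattice points of $A$ then yields
$$\Big|\sum_{y\in A}\big[\Phi_0(x-y)-\Phi_0(x)\big]f(y)\Big|\le \frac{C\rho^3}{|x|}\,\|f\|_{\ell^\infty_m}.$$
(Deliberately discarding the decay of $f$ here keeps the constant independent of $m$; retaining it would produce a sum $\sum_{|y|\le\rho}(1+|y|)^{1-m}$ whose constant degenerates at $m=3$.) On $B$ I would use $|f(y)|\le\|f\|_{\ell^\infty_m}(1+|y|)^{-m}$ together with the tail estimate $\sum_{|y|>\rho}(1+|y|)^{-m}\le C(m-2)^{-1}\rho^{2-m}$. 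The key observation is that for $\rho<|y|$ with $|x-y|$ comparable to $|x|$ one has $|\Phi_0(x-y)-\Phi_0(x)|=O(1)$, so the bulk of $B$ contributes $\le C(m-2)^{-1}\rho^{2-m}\|f\|_{\ell^\infty_m}$ with no logarithm; only the region $y\approx x$ (where the difference is as large as $\ln|x|$ but $|f|\lesssim|x|^{-m}$) and the far tail $|y|\gg|x|$ generate an extra term of size $C(m-2)^{-2}|x|^{2-m}\ln(e+|x|)\|f\|_{\ell^\infty_m}$.

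Finally I would balance the two principal terms $\rho^3/|x|$ and $(m-2)^{-1}\rho^{2-m}$ by choosing $\rho\sim\big(|x|/(m-2)\big)^{1/(m+1)}$, which produces exactly the decay exponent $\frac{2-m}{m+1}$; the leftover logarithmic term $|x|^{2-m}\ln(e+|x|)$ is then absorbed into the stated factor $(\ln(e+|x|))^{1/(m+1)}$ and the generous constant $(m-2)^{-4}$, using $\sup_{t\ge1}t^{2-m}\ln t\le C(m-2)^{-1}$. The main obstacle I anticipate is not the structure of the argument but the bookkeeping of the constant: one must track the $m$-dependence uniformly over all $m>2$, verify that the optimal $\rho$ remains in $[1,|x|/2]$ (handling the degenerate ranges of $|x|$ and $m$ by direct estimates, where the factor $c_0^m$ enters), and check that each tail and logarithmic sum costs at most the claimed powers of $(m-2)^{-1}$, so that the whole constant can be folded into $c_0^m/(m-2)^4$. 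A secondary technical point is the careful use of the error term $O(|x|^{-1})$ in \eqref{fund-1} on $A$ and near $y\approx x$, since the cruder two-sided bounds \eqref{eq:q1}–\eqref{fund-2} alone do not deliver the required smallness of the difference kernel.
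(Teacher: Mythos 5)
Your proposal is correct, and its skeleton coincides with the paper's proof: both exploit the mean-zero hypothesis through the difference kernel $\Phi_0(x-y)-\Phi_0(x)$, both need the refined expansion \eqref{fund-1} (you rightly note that \eqref{eq:q1}--\eqref{fund-2} alone are insufficient), both split at a scale optimized to give the exponent $\tfrac{2-m}{m+1}$, and both handle small $|x|$ and the $m$-dependence of constants by brute force, folding everything into $c_0^m(m-2)^{-4}$ (the paper's Case~1). In fact your decomposition is algebraically identical to the paper's $I_1+I_2+II+III$, since $I_2=\Phi_0(x)\int_{B_R}f=-\Phi_0(x)\int_{\Z^2\setminus B_R}f$ makes $I_2+II+III$ exactly your far-region sum of $[\Phi_0(x-y)-\Phi_0(x)]f(y)$. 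Where you genuinely differ is in the estimates and the balance: (i) on the near region the paper keeps the decay of $f$ but bounds the kernel difference uniformly by $CR/|x|$, getting $\tfrac{C^m}{m-2}\tfrac{R}{|x|}$ (which also sidesteps the $m=3$ degeneracy you worried about), whereas you discard the decay and pay $\rho^2$ from point counting; (ii) on the far region you use the difference kernel directly and observe it is $O(1)$ on the bulk $\{|x-y|\sim|x|\}$, removing the logarithm from the dominant far term, while the paper bounds $|\Phi_0(x)|$ and $|\Phi_0(x-y)|$ separately and carries logs in $I_2$, $II$, $III$; (iii) hence the balances differ --- the paper solves $R/|x|=R^{-m}|x|^2\ln|x|$, i.e.\ $R=|x|^{3/(m+1)}(\ln|x|)^{1/(m+1)}$, you solve $\rho^3/|x|\sim(m-2)^{-1}\rho^{2-m}$, i.e.\ $\rho\sim\big((m-2)^{-1}|x|\big)^{1/(m+1)}$ --- yet both land on $|x|^{(2-m)/(m+1)}$, and your leftover terms $|x|^{2-m}\ln|x|$ are absorbed exactly as the paper absorbs its subordinate terms via \eqref{eq:cc1}. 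Your organization buys a log-free main term and a more transparent use of the cancellation; the paper's cruder term-by-term bounds buy a single inequality \eqref{eq:cc1} that dominates all lower-order terms at once. The remaining work is the bookkeeping you describe, and it closes without obstruction.
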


  For any $a\in \R,$ we use the notation  $\lceil a\rceil:=\min\{n\in \Z: n\geq a\}.$ \begin{lemma}\label{lm 2-1-1}
 Let $\sigma >2,$ and $\rho\in\R.$  Then for any $r\geq 4,$ 
\begin{align}\label{mm-00}
   \int_{  \Z^2  \setminus B_r}  |y|^{-\sigma} (\ln |y|)^{-\rho} dy \leq  \pi2^{2\sigma+2|\rho|-1}  \varpi_0(\sigma, \rho) r^{2-\sigma}  (\ln r)^{-\rho}, 
  \end{align}
 where 
 \begin{equation}\label{epp 1}
\varpi_0(\sigma,\rho)=
 \left\{
\begin{array}{lll}
 \frac{\lceil-\rho\rceil! \sigma^{\lceil-\rho\rceil} }{ (\sigma-2)^{\lceil-\rho\rceil+1} }, \quad
   &{\rm if}\ \  \sigma>2,\ \rho<0, \\[2mm]
  (\sigma-2)^{-1},\quad
   &{\rm if}\ \  \sigma>2,\ \rho\geq0. \\[2mm]
 \end{array}
 \right.
\end{equation}
For the case $\sigma>2$ and $\rho=-1,$ we have for any $r\geq 4e^{\frac{2}{\sigma-2}},$
$$\int_{  \Z^2  \setminus B_r}  |y|^{-\sigma} \ln |y| dy\leq \frac{\pi 2^{2\sigma+3}}{\sigma-2}r^{2-\sigma}\ln r.$$
 \end{lemma}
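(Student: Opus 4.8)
The plan is to reduce the lattice sum to a continuous integral over $\R^2$ and then evaluate that integral by elementary means. Throughout, write $g(x):=|x|^{-\sigma}(\ln|x|)^{-\rho}$, which is positive and smooth on $\{|x|>r\}$ since $r\geq4$ forces $\ln|x|>\ln 4>1$. To each lattice point $y\in\Z^2$ with $|y|>r$ I attach the unit square $S_y:=y+[-\tfrac12,\tfrac12)^2$, of area $|S_y|=1$, and I compare the value $g(y)$ with the integral of $g$ over $S_y$.

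First I would establish the pointwise comparison. For $x\in S_y$ with $|y|>r\geq4$ one has $|x|\leq|y|+\tfrac{\sqrt2}{2}\leq 2|y|$ and $|x|\geq|y|-\tfrac{\sqrt2}{2}\geq\tfrac{|y|}{2}$; taking logarithms and using $\ln|y|\geq\ln 4=2\ln 2$ gives $\tfrac12\ln|y|\leq\ln|x|\leq 2\ln|y|$. Combining the two bounds yields $g(x)\geq 2^{-\sigma-|\rho|}g(y)$ for every $x\in S_y$, regardless of the sign of $\rho$ (for $\rho\geq0$ the logarithmic factor is controlled at $|x|=2|y|$, for $\rho<0$ at $|x|=|y|/2$). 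Since $|S_y|=1$, this gives $g(y)\leq 2^{\sigma+|\rho|}\int_{S_y}g\,dx$. Because the squares $S_y$ are pairwise disjoint and $\bigcup_{|y|>r}S_y\subseteq\R^2\setminus B_{\rho_0}$ with $\rho_0:=r-\tfrac{\sqrt2}{2}>e$, summation produces
$$\int_{\Z^2\setminus B_r}g\,dy\;\leq\;2^{\sigma+|\rho|}\int_{\R^2\setminus B_{\rho_0}}g\,dx\;=\;2^{\sigma+|\rho|}\,2\pi\int_{\rho_0}^\infty t^{1-\sigma}(\ln t)^{-\rho}\,dt,$$
where the last equality is polar coordinates. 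Note $\rho_0>e$ guarantees $\ln\rho_0>1$, which I will use below.

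Next I would bound the one-dimensional integral $J:=\int_{\rho_0}^\infty t^{1-\sigma}(\ln t)^{-\rho}\,dt$, splitting on the sign of $\rho$. For $\rho\geq0$ the factor $(\ln t)^{-\rho}$ is decreasing, so $J\leq(\ln\rho_0)^{-\rho}\int_{\rho_0}^\infty t^{1-\sigma}\,dt=(\sigma-2)^{-1}\rho_0^{2-\sigma}(\ln\rho_0)^{-\rho}$, matching $\varpi_0=(\sigma-2)^{-1}$. For $\rho<0$, set $s:=-\rho>0$ and $n:=\lceil s\rceil$, and integrate by parts $n$ times; the boundary terms give the recursion leading to $J\leq\tfrac{\rho_0^{2-\sigma}}{\sigma-2}\sum_{j=0}^{n}\tfrac{(s)_j}{(\sigma-2)^j}(\ln\rho_0)^{s-j}$, where $(s)_j=s(s-1)\cdots(s-j+1)$ and the tail integral $\int_{\rho_0}^\infty t^{1-\sigma}(\ln t)^{s-n}\,dt$ is bounded using $s-n\leq0$. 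Since $s>n-1$ all factors of $(s)_j$ are positive and $(s)_j\leq n!/(n-j)!$, and $\ln\rho_0\geq1$ gives $(\ln\rho_0)^{s-j}\leq(\ln\rho_0)^{s}$; after the index change $i=n-j$ the coefficient sum becomes $\tfrac{n!}{(\sigma-2)^n}\sum_{i=0}^n\tfrac{(\sigma-2)^i}{i!}\leq\tfrac{n!}{(\sigma-2)^n}(\sigma-1)^n\leq\tfrac{n!\,\sigma^n}{(\sigma-2)^n}$, using $1/i!\leq\binom{n}{i}$ and the binomial theorem. This yields exactly $J\leq\varpi_0\,\rho_0^{2-\sigma}(\ln\rho_0)^{-\rho}$. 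In both cases I finish by re-expanding $\rho_0=r-\tfrac{\sqrt2}{2}\geq\tfrac r2$: since $2-\sigma<0$ one gets $\rho_0^{2-\sigma}\leq2^{\sigma-2}r^{2-\sigma}$, while $(\ln\rho_0)^{-\rho}\leq2^{|\rho|}(\ln r)^{-\rho}$ (trivially for $\rho<0$, and via $\ln(r/2)\geq\tfrac12\ln r$ for $\rho\geq0$). Collecting the powers of $2$ from the comparison step and these two estimates gives precisely $2^{\sigma+|\rho|}\cdot2^{\sigma-2}\cdot2^{|\rho|}=2^{2\sigma+2|\rho|-2}$, hence the factor $\pi\,2^{2\sigma+2|\rho|-1}\varpi_0$ after the $2\pi$.

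Finally, for the sharper $\rho=-1$ statement I would not specialize the general bound (which would force an unwanted $\sigma/(\sigma-2)$), but instead redo the single integration by parts directly: $\int_{\rho_0}^\infty t^{1-\sigma}\ln t\,dt=\tfrac{\rho_0^{2-\sigma}\ln\rho_0}{\sigma-2}\bigl(1+\tfrac{1}{(\sigma-2)\ln\rho_0}\bigr)$, and the hypothesis $r\geq 4e^{2/(\sigma-2)}$ ensures $\rho_0\geq e^{2/(\sigma-2)}$, so the correction factor is $\leq\tfrac32$; tracking constants as above then gives the stated $\tfrac{\pi2^{2\sigma+3}}{\sigma-2}r^{2-\sigma}\ln r$. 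I expect the one genuinely delicate point to be the $\rho<0$ case: carrying out the iterated integration by parts and bounding the resulting falling-factorial/truncated-exponential sum so that it collapses to exactly $\varpi_0=\tfrac{\lceil-\rho\rceil!\,\sigma^{\lceil-\rho\rceil}}{(\sigma-2)^{\lceil-\rho\rceil+1}}$ requires the two nonobvious inequalities $(s)_j\leq n!/(n-j)!$ and $\sum_{i\le n}(\sigma-2)^i/i!\leq\sigma^n$; the lattice-to-continuum comparison is routine but must be done with enough care to produce the precise constant $2^{2\sigma+2|\rho|}$.
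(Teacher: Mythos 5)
Your proof is correct and follows essentially the same route as the paper: a unit-square lattice-to-continuum comparison producing the factor $2^{\sigma+|\rho|}$, reduction by polar coordinates to $\int t^{1-\sigma}(\ln t)^{-\rho}dt$, monotonicity of $(\ln t)^{-\rho}$ for $\rho\geq 0$, and iterated integration by parts for $\rho<0$ (your falling factorials $(s)_j$ coincide with the paper's generalized factorials $(-\rho)!/(-\rho-m)!$), with the same self-improving or direct treatment of the borderline case $\rho=-1$ under $r\geq 4e^{2/(\sigma-2)}$. The only notable difference is that you supply a justification, via $\sum_{i\leq n}(\sigma-2)^i/i!\leq(\sigma-1)^n\leq\sigma^n$, for the final combinatorial inequality that the paper asserts without proof.
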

 \noindent{\bf Proof. }  
One easily shows that for $|x|\geq r\geq 4$ and any $y\in Q_{1}(x),$
\begin{equation}\label{eq:a1}
\frac12 \leq  \frac{|y|}{|x|}\leq 2,\qquad   \frac12  \leq  \frac{\ln |y|}{\ln |x|}\leq  2. 
 \end{equation}
Set $A:=\cup_{x\in \Z^2\setminus B_r}Q_{1}(x)$ with $r\geq 4$.  Define a function $h:A\to \R$ as

$$h(y)=\sum_{x\in \Z^2\setminus B_r}|x|^{-\sigma} (\ln |x|)^{-\rho} \mathds{1}_{Q_{1}(x)}(y),$$

where $\mathds{1}_{Q_{1}(x)}(\cdot)$ is the indicator function on $Q_{1}(x).$
Hence, by \eqref{eq:a1}
\begin{align*} 
0\leq \sum_{  x\in \Z^2  \setminus B_r}  |x|^{-\sigma} (\ln |x|)^{-\rho} =\int_{A}h(y)d\mathcal{L}^2(y)
 \leq 2^{\sigma+|\rho|}\int_{A}|y|^{-\sigma} (\ln |y|)^{-\rho}d\mathcal{L}^2(y),
\end{align*} where $\mathcal{L}^2$ is the Lebesgue measure on $\R^2.$
Since $\R^2\setminus B_{r+1}\subset A\subset \R^2\setminus B_{r-1},$
$$ \int_{A}|y|^{-\sigma} (\ln |y|)^{-\rho}d\mathcal{L}^2(y)\leq \int_{\R^2\setminus B_{r-1}}|y|^{-\sigma} (\ln |y|)^{-\rho}d\mathcal{L}^2(y).$$
The results are reduced to the estimates of 
$$\int_{\R^2\setminus B_{r}}|y|^{-\sigma} (\ln |y|)^{-\rho}d\mathcal{L}^2(y)=2\pi\int_{r}^\infty t^{1-\sigma} (\ln t)^{-\rho} dt.$$
 
Let $r>1.$ For the case that $\sigma>2$ and $\rho\geq 0,$  we have that 
$$\int_{r}^\infty t^{1-\sigma} (\ln t)^{-\rho} dt\leq (\ln r)^{-\rho}\int_{r}^\infty t^{1-\sigma}  dt=\frac{1}{\sigma-2}r^{2-\sigma}(\ln r)^{-\rho}.$$ 
 For the case that $\sigma>2$ and $\rho\in[-1,0),$ we derive that  
 \begin{align*}
\int_{r}^\infty t^{1-\sigma} (\ln t)^{-\rho} dt&= \frac{1}{\sigma-2}r^{2-\sigma}(\ln r)^{-\rho}-\frac{\rho}{\sigma-2}\int_{r}^\infty t^{1-\sigma} (\ln t)^{-\rho-1} dt 
\\&\leq  \frac{1}{\sigma-2}r^{2-\sigma}(\ln r)^{-\rho} +\frac{-\rho}{(\sigma-2)^2}r^{2-\sigma}(\ln r)^{-\rho-1} 
\\&\leq  \big(\frac{1}{\sigma-2} +\frac{-\rho}{(\sigma-2)^2}\big) r^{2-\sigma}(\ln r)^{-\rho}   
\end{align*}
For the case $\sigma>2$ and $\rho=-1,$ we have for $r\geq e^{\frac{2}{\sigma-2}}$
\begin{eqnarray*}
\int_{r}^\infty t^{1-\sigma} \ln t dt&=& \frac{1}{\sigma-2}r^{2-\sigma}\ln r+\frac{1}{\sigma-2}\int_{r}^\infty t^{1-\sigma} dt\\
&\leq& \frac{1}{\sigma-2}r^{2-\sigma}\ln r+\frac12\int_{r}^\infty t^{1-\sigma} \ln t dt.
\end{eqnarray*} This yields that
$$\int_{r}^\infty t^{1-\sigma} \ln t dt\leq \frac{2}{\sigma-2}r^{2-\sigma}\ln r.$$

We recall the notation for $b\geq 0,$ $b!=b\cdot(b-1)\cdots(b-[b]+1)$ for $b>1$ and $b!=1$ if $b\in[0,1].$
For the case $\sigma>2$ and $\rho\in[-n,1-n),$ $\lceil -\rho\rceil=n.$ Then we have
 \begin{align*}
&\quad\ \int_{r}^\infty t^{1-\sigma} (\ln t)^{-\rho} dt
\\&= \frac{1}{\sigma-2}r^{2-\sigma}(\ln r)^{-\rho}-\frac{\rho}{\sigma-2}\int_{r}^\infty t^{1-\sigma} (\ln t)^{-\rho-1} dt 
\\&=\sum^{n-1}_{m=0} \frac{ (-\rho)! }{(-\rho-m)!  (\sigma-2)^{m+1}} r^{2-\sigma}(\ln r)^{-\rho-m} +\frac{(-\rho)! }{(\sigma-2)^n}\int_{r}^\infty t^{1-\sigma} (\ln t)^{-\rho-n} dt  
\\&\leq  \sum^{n-1}_{m=0} \frac{ (-\rho)! }{(-\rho-m)! (\sigma-2)^{m+1}} r^{2-\sigma}(\ln r)^{-\rho-m} +\frac{(-\rho)! }{(\sigma-2)^{n+1}} r^{2-\sigma}(\ln r)^{-\rho-n}
\\&\leq  \big(\sum^{n}_{m=0} \frac{ n! }{(n-m)! (\sigma-2)^{m+1}} \big) r^{2-\sigma}(\ln r)^{-\rho} 
\\&\leq    \frac{n! \sigma^{n} }{ (\sigma-2)^{n+1} }r^{2-\sigma}(\ln r)^{-\rho}.
\end{align*}

Combining all above estimates, we prove the result. \hfill$\Box$\medskip 

 \begin{remark} 
 
By the same proof in the above proposition, 
there is a universal constant $C_1>0$
such that $$C_1^{-\sigma} \int_{\R^2}  (1+|x|)^{-\sigma} d\mathcal{L}^2(x)\leq\int_{\Z^2}  (1+|x|)^{-\sigma} dx\leq C_1^\sigma \int_{\R^2}  (1+|x|)^{-\sigma} d\mathcal{L}^2(x).$$ Hence,
there exists a universal constant $C>0$ such that for $\sigma>2$
 \begin{equation}\label{eq:general1} 
\frac{C^{-\sigma}}{\sigma-2}\leq \int_{\Z^2}  (1+|x|)^{-\sigma} dx\leq \frac{C^\sigma}{\sigma-2}.\end{equation}
 \end{remark}


 \noindent{\bf Proof of Proposition \ref{pr 2.1}. }  
 Without loss of generality, we assume $\|f\|_{\ell^\infty_m }=1.$ Set 
 $$r_0:=\left(e^{10m}+\frac{1}{m-2}\right)^{\frac{3}{m-2}}.$$ Let $x\in \Z^2.$ We divide into the following cases.
 
 \textbf{Case 1.} $|x|\leq r_0.$
 \begin{eqnarray*}|\Phi_0\ast f(x)|&\leq& C \int_{y\in\Z^2}\ln(1+|y-x|)(1+|y|)^{-m}dy\\
 &\leq &C \left(\int_{\Z^2\cap B_{2r_0}}\ln(4r_0)(1+|y|)^{-m}dy+\int_{\Z^2\setminus B_{2r_0}}\ln(3|y|)|y|^{-m}dy\right).
 \end{eqnarray*}
  By \eqref{eq:general1} and Lemma~\ref{lm 2-1-1}, there exists a constant $C>1$ such that
 $$|\Phi_0\ast f(x)|\leq\frac{C^m}{m-2}\ln r_0.$$ Hence for any $|x|\leq r_0,$
 \begin{eqnarray*}|\Phi_0\ast f(x)|&\leq& \frac{C^m}{m-2}\ln r_0(e+r_0)^{\frac{m-2}{m+1}}(e+|x|)^{\frac{2-m}{m+1}} \big(\ln(e+|x|)\big)^{\frac{1}{m+1}}.\end{eqnarray*}
 The result follows from choosing large $\widetilde{C}$ such that $\frac{C^m}{m-2}\ln r_0(e+r_0)^{\frac{m-2}{m+1}}\leq {\frac{\widetilde{C}^m}{(m-2)^4}}.$
 
  
  \textbf{Case 2.} $|x|> r_0.$ Set 
  $$R:= |x|^{\theta_1}\big(\ln(e+|x|)\big)^{\theta_2}$$
   with 
   $$\theta_1=\frac{3}{m+1},\quad \theta_2=\frac{1}{m+1}.$$
 First, we show that \begin{equation}\label{eq:cc1}\frac{\ln|x|}{|x|^{m-2}}\leq \frac{1}{20^{m+1}}\leq 1.\end{equation} Since $|x|\mapsto \frac{\ln|x|}{|x|^{m-2}}$ is non-increasing for $|x|>r_0,$ it suffices to prove that
 $$\frac{\ln r_0}{r_0^{m-2}}\leq e^{-4(m+1)},$$
which is equivalent to
 $$\ln \ln r_0-(m-2) \ln r_0+4(m+1)\leq 0.$$ In fact,
 \begin{align*}
 \ln \ln r_0-(m-2) \ln r_0+4(m+1)&\leq \ln 3-\ln(e^{10m}+\frac{1}{m-2})+4(m+1)
 \\&\leq \ln 3+4-6m
  \\& \leq 0.
 \end{align*}
 This yields the result.


Moreover, we prove that $R\leq \frac{1}{10}|x|.$ In fact, 
$$\frac{R}{|x|}\leq 2|x|^{\theta_1-1}(\ln|x|)^{\theta_2}=2\left(\frac{\ln|x|}{|x|^{m-2}}\right)^{\frac{1}{m+1}}\leq\frac{1}{10}.$$
 
 Next, we write
 \begin{eqnarray*}\Phi_0\ast f(x)&=&\left(\int_{\Z^2\cap B_R}+\int_{\Z^2\cap (B_{4|x|}(x)\setminus B_R)}+\int_{\Z^2\setminus B_{4|x|}(x)}\right) \Phi_0(x-y)f(y)dy\\
 &=:&I+II+III.
 \end{eqnarray*}

For the term $I,$ 
 \begin{eqnarray*} I&=&\int_{\Z^2\cap B_R}\left(\Phi_0(x-y)-\Phi_0(x)\right)f(y)dy+\int_{\Z^2\cap B_R}\Phi_0(x)f(y)dy\\
 &=:&I_1+I_2.
 \end{eqnarray*}
 For the term $I_1,$ by \eqref{fund-1}, there is a constant $C$
 such that
 $$\left|\Phi_0(x)+\frac1{2\pi}\ln |x|+\frac{\gamma_0}{2}\right|\leq \frac{C}{1+|x|},\quad \forall x\in \Z^2.$$ Hence, by $R\leq \frac{1}{10}|x|,$ for any $y\in \Z^2\cap B_R,$ 
 $$|\Phi_0(x-y)-\Phi_0(x)|\leq \frac1{2\pi}|\ln|x-y|-\ln |x||+\frac{C}{1+|x-y|}+\frac{C}{1+|x|}\leq C\left(|\ln|x-y|-\ln |x||+\frac{1}{|x|}\right).$$
By the mean value theorem, for some $\xi$ between $|x|$ and $|x-y|,$ 
$$|\ln|x-y|-\ln |x||=\frac{1}{\xi}||x-y|-|x||\leq \frac{10}{9}\frac{R}{|x|}.$$
 This implies that
 $$|I_1|\leq C\frac{R}{|x|}\int_{\Z^2\cap B_R}|f(y)|dy\leq \frac{C^m}{m-2}\frac{R}{|x|}.$$
 
 For the term $I_2,$ by $\int_{\Z^2}f=0$ and (\ref{mm-00}) with $\rho=0$,  \begin{eqnarray*}|I_2|&=&|\Phi_0(x)|\left|\int_{\Z^2\cap B_R}f(y)dy\right|=|\Phi_0(x)|\left|\int_{\Z^2\setminus B_R}f(y)dy\right|\\
 &\leq& C\ln|x|  \int_{\Z^2\setminus B_R}(1+|y|)^{-m}dy\leq \frac{C^m}{m-2}\ln|x| R^{2-m}.\end{eqnarray*}
 

 
 Now we consider the term $II,$
 \begin{eqnarray*}
 |II|&\leq&  R^{-m}\int_{\Z^2\cap (B_{4|x|}(x)\setminus B_R)} |\Phi_0(x-y)|dy\leq   R^{-m}\int_{\Z^2\cap B_{4|x|}} |\Phi_0(y)|dy\\
 &\leq& C  R^{-m}|x|^2\ln |x|.
 \end{eqnarray*}
 
 For the term $III,$ noting that $|y|\geq 3|x|$ for any $y\in \Z^2\setminus B_{4|x|}(x),$ we have
 \begin{eqnarray*}
 |III|&\leq&\int_{\Z^2\setminus B_{4|x|}(x)} |\Phi_0(x-y)f(y)|dy\leq C\int_{\Z^2\setminus B_{|x|}} \ln(3|y|) |y|^{-m}dy\\
 &\leq&\frac{C^m}{m-2}|x|^{2-m} \ln|x|.
 \end{eqnarray*}
 Combining all above estimates,
 we have
 \begin{eqnarray*}|\Phi_0\ast f(x)|&\leq& \frac{C^m}{m-2}\left(\frac{R}{|x|}+\ln|x| R^{2-m}+R^{-m}|x|^2\ln |x|+|x|^{2-m}\ln |x|\right)\\
 &\leq&\frac{C^m}{m-2}|x|^{\theta_1-1}(\ln|x|)^{\theta_2}\left(2+\left(\frac{\ln|x|}{|x|^{m-2}}\right)^{2\theta_2}+\left(\frac{\ln|x|}{|x|^{m-2}}\right)^{m\theta_2}\right)\\
 &\leq&\frac{C^m}{m-2}|x|^{\theta_1-1}(\ln|x|)^{\theta_2},\end{eqnarray*}
 where we have used \eqref{eq:cc1}. 
 
 This proves the result.
   \hfill$\Box$
 \medskip

The following are corollaries of Proposition \ref{pr 2.1}. 
\begin{corollary}\label{cr 2.1}
Let $f\in \ell^\infty_m(\Z^2)$ with $m>2$ satisfy (\ref{sst-1}). 
  Then for any $\tau\in(0,\frac{m-2}{m+1}),$ $\Phi_0\ast f\in \ell^\infty_{\tau}(\Z^2)$  and 
 \begin{align}\label{sst-3}
\|\Phi_0\ast f\|_{\ell^\infty_\tau } \leq b_{m,\tau} \|f\|_{\ell^\infty_m }.    
\end{align}
where 
$$b_{m,\tau}=\frac{c_0^m }{(m-2)^4}  \big(\frac1{ m-2-\tau(m+1)} \big)^{\frac1{m+1}} $$
and $c_0>0$ is given in Proposition \ref{pr 2.1}.
\end{corollary}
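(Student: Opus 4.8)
The plan is to deduce the corollary directly from the pointwise bound of Proposition~\ref{pr 2.1} by a single one-variable optimization, with no further analytic input. By definition of the weighted norm I would write
$$\|\Phi_0\ast f\|_{\ell^\infty_\tau} = \sup_{x\in\Z^2}\big(|(\Phi_0\ast f)(x)|(1+|x|)^\tau\big)$$
and insert the estimate \eqref{sst-2}. Since $1+|x|\leq e+|x|$ and $\tau>0$, I can replace $(1+|x|)^\tau$ by $(e+|x|)^\tau$ at no cost, so that after factoring out $\frac{c_0^m}{(m-2)^4}\|f\|_{\ell^\infty_m}$ the entire problem reduces to bounding the scalar quantity
$$\sup_{x\in\Z^2}(e+|x|)^{\tau+\frac{2-m}{m+1}}\big(\ln(e+|x|)\big)^{\frac{1}{m+1}}.$$

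First I would set $t=e+|x|\geq e$, together with $\gamma:=\tau-\frac{m-2}{m+1}$ and $\mu:=\frac{1}{m+1}$, so the supremum above is dominated by $\sup_{t\geq e} t^{\gamma}(\ln t)^{\mu}$. The hypothesis $\tau<\frac{m-2}{m+1}$ is precisely what forces $\gamma<0$, which makes this supremum finite; this is the only role played by the upper restriction on $\tau$. Substituting $s=\ln t\geq 1$ converts the function into $e^{\gamma s}s^{\mu}$, whose logarithmic derivative $\gamma+\mu/s$ vanishes at $s^\ast=\mu/|\gamma|$, giving the maximum value $e^{-\mu}\mu^{\mu}|\gamma|^{-\mu}$. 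If $s^\ast<1$ the maximum on $[1,\infty)$ is instead attained at $s=1$ with value $e^{\gamma}<1$, which is even smaller, so the bound $e^{-\mu}\mu^{\mu}|\gamma|^{-\mu}$ holds in all cases.

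The remaining step is bookkeeping of constants. Writing $|\gamma|=\frac{m-2-\tau(m+1)}{m+1}$ and $\mu=\frac{1}{m+1}$, the powers of $m+1$ cancel:
$$\mu^{\mu}|\gamma|^{-\mu}=(m+1)^{-\frac{1}{m+1}}\Big(\frac{m+1}{m-2-\tau(m+1)}\Big)^{\frac{1}{m+1}}=\Big(\frac{1}{m-2-\tau(m+1)}\Big)^{\frac{1}{m+1}}.$$
Since $e^{-\mu}\leq 1$, the scalar supremum is bounded by $\big(\frac{1}{m-2-\tau(m+1)}\big)^{\frac{1}{m+1}}$, and multiplying back by $\frac{c_0^m}{(m-2)^4}\|f\|_{\ell^\infty_m}$ yields exactly $b_{m,\tau}\|f\|_{\ell^\infty_m}$; in particular $\Phi_0\ast f\in\ell^\infty_\tau(\Z^2)$. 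I do not expect a genuine obstacle: all the analytic content already sits in \eqref{sst-2}, and what remains is elementary calculus. The only point requiring care is to use the true optimizer $s^\ast=\mu/|\gamma|$ rather than a convenient but lossy choice, since it is exactly this sharp maximization that produces the cancellation of the $(m+1)$ powers and hence the stated constant $b_{m,\tau}$.
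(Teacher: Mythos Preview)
Your proposal is correct and follows essentially the same route as the paper: insert the pointwise bound \eqref{sst-2}, pass from $(1+|x|)^\tau$ to $(e+|x|)^\tau$, and then maximize $(e+|x|)^{\tau+\frac{2-m}{m+1}}(\ln(e+|x|))^{1/(m+1)}$ to obtain the factor $e^{-1/(m+1)}\big(\frac{1}{m-2-\tau(m+1)}\big)^{1/(m+1)}$. The paper simply records this value as a ``direct calculation'', whereas you carry out the substitution $s=\ln t$ explicitly and also handle the boundary case $s^\ast<1$; the argument and resulting constant are identical.
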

 \noindent{\bf Proof. } By Proposition \ref{pr 2.1}, we have that 
 \begin{align*}
\|\Phi_0\ast f\|_{\ell^\infty_\tau } & \leq \sup_{x\in\Z^2}\Big(\big|\big(\Phi_0\ast f\big)(x) \big| (e+|x|)^{\tau}\Big)  \\ &\leq \frac{c_0^m}{(m-2)^4}\|f\|_{\ell^\infty_m }(e+|x|)^{\frac{2-m}{m+1}+\tau} \big(\ln(e+|x|)\big)^{\frac{1}{m+1}}
\\&\leq \frac{c_0^m}{(m-2)^4} \|f\|_{\ell^\infty_m } \, \sup_{x\in\Z^2}\Big( (e+|x|)^{\frac{2-m}{m+1}+\tau} \big(\ln(e+|x|)\big)^{\frac{1}{m+1}}\Big). 
\end{align*}
 For $0<\tau<\frac{m-2}{m+1}$, direct calculation shows that 
 \begin{align*}
 \sup_{x\in\Z^2} (e+|x|)^{\frac{2-m}{m+1}+\tau} \big(\ln(e+|x|)\big)^{\frac{1}{m+1}}&\leq e^{-\frac{1}{m+1}}\big(\frac1{ m-2-\tau(m+1)} \big)^{\frac1{m+1}}
  \\& \leq\big(\frac1{ m-2-\tau(m+1)} \big)^{\frac1{m+1}}.\end{align*}
This proves the result.   \hfill$\Box$

 \begin{corollary}\label{cr 2.2}
Let $f\in \ell^\infty_{m}(\Z^2)$ with $m>2$ satisfy that 
\begin{align}\label{sst-1-1}
\beta_f:=\frac{1}{2\pi} \int_{\Z^2} f(x)dx >0. 
\end{align}  
Then there exists a universal constant $C>1$ such that
 \begin{align}\label{sst-3-0}
 \Big|\big(\Phi_0\ast f\big)(x) + \beta_f \ln(1+|x|)+\pi\gamma_0\beta_f\Big|\leq \frac{C^m}{(m-2)^4} \big(\|f\|_{\ell^\infty_m }+\beta_f\big) (e+|x|)^{\frac{2-m}{m+1}}\big( \ln(e+|x|)\big)^{\frac{1}{m+1}}.
\end{align}
\end{corollary}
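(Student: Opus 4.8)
The plan is to reduce the estimate to Proposition~\ref{pr 2.1} by subtracting off the exact multiple of the Dirac mass that accounts for the nonzero total integral of $f$. Since $\beta_f=\frac{1}{2\pi}\int_{\Z^2}f\,dx$, I would set
\[
g:=f-2\pi\beta_f\,\delta_0,
\]
so that $\int_{\Z^2}g\,dx=\int_{\Z^2}f\,dx-2\pi\beta_f=0$. Because $\delta_0$ is supported at the origin with $\|\delta_0\|_{\ell^\infty_m}=1$, we have $g\in\ell^\infty_m(\Z^2)$ and $\|g\|_{\ell^\infty_m}\le\|f\|_{\ell^\infty_m}+2\pi\beta_f$. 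Thus $g$ satisfies hypothesis \eqref{sst-1} of Proposition~\ref{pr 2.1}.

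Next I would isolate the singular part explicitly. By the uniqueness of $\Phi_0$ and the reflection symmetry $x\mapsto-x$ of $\Z^2$ (under which $-\Delta$, $\delta_0$, and the normalization $\Phi_0(0)=0$ are all invariant), $\Phi_0$ is even, so
\[
(\Phi_0\ast\delta_0)(x)=\int_{\Z^2}\Phi_0(y-x)\delta_0(y)\,dy=\Phi_0(-x)=\Phi_0(x).
\]
Linearity of the convolution then yields the decomposition $(\Phi_0\ast f)(x)=(\Phi_0\ast g)(x)+2\pi\beta_f\,\Phi_0(x)$. Applying Proposition~\ref{pr 2.1} to $g$ bounds the first term by $\frac{c_0^m}{(m-2)^4}\big(\|f\|_{\ell^\infty_m}+2\pi\beta_f\big)(e+|x|)^{\frac{2-m}{m+1}}\big(\ln(e+|x|)\big)^{\frac{1}{m+1}}$, which already has the required form.

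It then remains to compare $2\pi\beta_f\Phi_0(x)$ with $-\beta_f\ln(1+|x|)-\pi\gamma_0\beta_f$. Setting $\Psi(x):=2\pi\Phi_0(x)+\ln(1+|x|)+\pi\gamma_0$, the quantity on the left of \eqref{sst-3-0} equals $(\Phi_0\ast g)(x)+\beta_f\Psi(x)$. The estimate $\big|\Phi_0(x)+\frac1{2\pi}\ln|x|+\frac{\gamma_0}{2}\big|\le\frac{C}{1+|x|}$ coming from \eqref{fund-1}, together with $|\ln(1+|x|)-\ln|x||=\ln(1+|x|^{-1})\le|x|^{-1}$ for $x\ne0$ and the explicit value $\Psi(0)=\pi\gamma_0$, gives a universal bound $|\Psi(x)|\le C'(1+|x|)^{-1}$. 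Finally I would absorb this into the target decay via the elementary inequality
\[
(1+|x|)^{-1}=\frac{e+|x|}{1+|x|}\,(e+|x|)^{-1}\le e\,(e+|x|)^{\frac{2-m}{m+1}}\big(\ln(e+|x|)\big)^{\frac{1}{m+1}},
\]
valid since $\frac{e+|x|}{1+|x|}\le e$ and $(e+|x|)^{\frac{3}{m+1}}\big(\ln(e+|x|)\big)^{\frac{1}{m+1}}\ge1$. Adding the two contributions and enlarging the constant gives \eqref{sst-3-0}.

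The only delicate point is the bookkeeping of the $m$-dependence of the constant: the contribution from $\Psi$ carries a universal factor $C'e$ with no $(m-2)^{-4}$, so I must check that $C'e\le\frac{C^m}{(m-2)^4}$ for a single universal $C$. This holds because $\inf_{m>2}\frac{C^m}{(m-2)^4}\to\infty$ as $C\to\infty$ (the quantity blows up both as $m\to2^+$ and as $m\to\infty$), so a large enough universal $C\ge c_0$ dominates simultaneously the Proposition~\ref{pr 2.1} constant and the $\Psi$-contribution. Everything else is a routine triangle-inequality combination.
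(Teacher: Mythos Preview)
Your argument is correct and is essentially the paper's own proof: you subtract $2\pi\beta_f\delta_0$ to kill the total mass, apply Proposition~\ref{pr 2.1} to the difference, and then use the asymptotics \eqref{fund-1} of $\Phi_0$ to identify the logarithmic main term. The paper carries this out in one line (``together with \eqref{fund-1}, implies \eqref{sst-3-0}''), whereas you have spelled out the bound on $\Psi(x)=2\pi\Phi_0(x)+\ln(1+|x|)+\pi\gamma_0$ and the absorption of the $(1+|x|)^{-1}$ remainder into the target profile, as well as the uniform-in-$m$ bookkeeping; these details are all fine.
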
 
\noindent{\bf Proof. }  Let 
$$f_1=f-\big(\int_{\Z^2} f(x)dx\big) \delta_0.$$ 
Then $\int_{\Z^2} f_1 dx=0$, $f_1\in \ell^\infty_{m}(\Z^2)$ with $m>2$ and we apply Proposition \ref{pr 2.1} to obtain
that 
\begin{align*}
\big|\big(\Phi_0\ast f_1\big)(x)   \big|&\leq \frac{c_0^m}{(m-2)^4}\|f_1\|_{\ell^\infty_m }(1+|x|)^{\frac{2-m}{m+1}} \big(\ln(e+|x|)\big)^{\frac{1}{m+1}}
\\&\leq \frac{c_0^m}{(m-2)^4}\left(\|f\|_{\ell^\infty_m }+\int_{\Z^2} f\right)(1+|x|)^{\frac{2-m}{m+1}}\big(\ln(e+|x|)\big)^{\frac{1}{m+1}} 
\end{align*}
and 
$$\big(\Phi_0\ast f_1\big)(x)  =\big(\Phi_0\ast f\big)(x) - \left(\int_{\Z^2} f\right)\,  \Phi_0(x) \quad{\rm for}\ \, x\in\Z^2,$$
which, together with (\ref{fund-1}),  implies (\ref{sst-3-0}). 
 \hfill$\Box$

 \setcounter{equation}{0}
 \section{Problem with source nonlinearity}
   
 \subsection{Bounded solution for modified model}
  
 Next   we  give  the existence result for the following   modified equation 
\begin{equation}\label{eq 2.2-IIs}
- \Delta u= K e^{\kappa u}-  g\quad
    {\rm in}\ \  \Z^2,
\end{equation}
where $K,g: \Z^2\to [0,\infty).$   We recall the well-known Schauder's fixed point theorem.    
\begin{theorem}\label{fixed-point}{\cite[Corollary 11.2]{GT}}
Let $D$ be a bounded convex subset of a Banach space $X$ with $0\in D.$ If $A:D\to D$ is a continuous and compact mapping. Then $A$ has a fixed point in $D.$
\end{theorem}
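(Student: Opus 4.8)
The plan is to reduce this infinite-dimensional fixed point statement to Brouwer's theorem in finite dimensions by means of the classical Schauder projection, exploiting that compactness of $A$ makes $\overline{A(D)}$ a compact subset of $X$ (I will also use that $D$ is closed, so that $D$ contains the finite convex hulls constructed below).

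First I would fix $\epsilon>0$ and, by compactness of $\overline{A(D)}$, cover it by finitely many balls $B(y_1,\epsilon),\dots,B(y_N,\epsilon)$ with centers $y_j\in A(D)\subseteq D$. I then define the Schauder projection $P_\epsilon$ onto $\mathrm{conv}\{y_1,\dots,y_N\}$ by
\[ P_\epsilon(y)=\frac{\sum_{j=1}^N \mu_j(y)\,y_j}{\sum_{j=1}^N \mu_j(y)},\qquad \mu_j(y)=\max\{0,\ \epsilon-\norm{y-y_j}\}. \]
For every $y\in\overline{A(D)}$ the denominator is strictly positive (some ball contains $y$), so $P_\epsilon$ is a well-defined continuous map into $\mathrm{conv}\{y_1,\dots,y_N\}$; moreover only indices with $\norm{y-y_j}<\epsilon$ contribute to the convex combination, which yields the uniform approximation bound $\norm{P_\epsilon(y)-y}\le\epsilon$.

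Next I would set $X_\epsilon:=\mathrm{span}\{y_1,\dots,y_N\}$ and $D_\epsilon:=D\cap X_\epsilon$, a compact convex subset of the finite-dimensional space $X_\epsilon$ which contains $\mathrm{conv}\{y_1,\dots,y_N\}$ (as $D$ is convex and each $y_j\in D$). Since $A$ maps $D_\epsilon$ into $D\subseteq$ domain and $P_\epsilon$ maps $\overline{A(D)}$ into $\mathrm{conv}\{y_1,\dots,y_N\}\subseteq D_\epsilon$, the composition $A_\epsilon:=P_\epsilon\circ A$ is a continuous self-map of the compact convex set $D_\epsilon$. Brouwer's fixed point theorem then produces $x_\epsilon\in D_\epsilon$ with $x_\epsilon=A_\epsilon(x_\epsilon)=P_\epsilon(A(x_\epsilon))$.

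Finally I would let $\epsilon=1/n\to 0$, obtaining $x_n$ with $x_n=P_{1/n}(A(x_n))$ and hence $\norm{x_n-A(x_n)}\le 1/n$ by the approximation bound. Because $A(x_n)\in\overline{A(D)}$, which is compact, a subsequence satisfies $A(x_n)\to z$; then $x_n\to z$ as well, continuity of $A$ gives $A(x_n)\to A(z)$, and passing to the limit yields $z=A(z)$ with $z\in D$. The main obstacle is Brouwer's fixed point theorem itself, which supplies the genuine topological content and must be established separately (e.g.\ via Sperner's lemma or topological degree); granting Brouwer, the only delicate points are the construction of $P_\epsilon$ and the verification of the uniform estimate $\norm{P_\epsilon(y)-y}\le\epsilon$.
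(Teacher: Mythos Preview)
The paper does not prove this theorem; it is quoted from Gilbarg--Trudinger as a standard result and used as a black box in Proposition~\ref{cr 3.1}. Your argument is the classical Schauder--projection proof and is correct.

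One small comment on the closedness hypothesis you add. Your stated reason, that closedness is needed so that $D$ contains the finite convex hulls $\mathrm{conv}\{y_1,\dots,y_N\}$, is not the right one: since each $y_j\in A(D)\subseteq D$ and $D$ is convex, the convex hull lies in $D$ automatically. Closedness is actually needed elsewhere: first, to make $D_\epsilon=D\cap X_\epsilon$ compact so that Brouwer applies (you could sidestep this by applying Brouwer directly on the polytope $\mathrm{conv}\{y_1,\dots,y_N\}$, which is always compact and is mapped to itself by $P_\epsilon\circ A$); second, and more essentially, to guarantee that the limit point $z=\lim A(x_{n_k})\in\overline{A(D)}\subseteq\overline{D}$ actually lies in $D$, so that $A(z)$ is defined and the continuity step $A(x_{n_k})\to A(z)$ is legitimate. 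In the paper's application $D=\bX_{r_0}$ is a closed ball, so the hypothesis is satisfied.
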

For a function $f:\Z^2\to \R,$ we write $f\gneqq 0$ if $f\geq 0$ and $f\not\equiv 0$ on $\Z^2.$

   \begin{proposition}\label{cr 3.1}
Assume $K,g\in \ell^\infty_{\tau_0}(\Z^2)$  with $\tau_0>2$,   $K,g\gneqq 0,$  and satisfy
  \begin{align}\label{req-1--12}
   \|   K\|_{\ell^\infty_{\tau_0}}  \leq  \frac{1 }{  2b_{  \tau_0 }  }\frac{r_0}{ e^{ \tilde c_0+2\kappa r_0 }},\qquad  \|g\|_{ \ell^\infty_{\tau_0}} \leq   \frac{  1}{2b_{  \tau_0 } }r_0
 \end{align}
 for  some  $r_0>0$, where $ \tilde c_0= \ln\Big(\frac{\int_{\Z^2}g(x)dx}{\int_{\Z^2}K(x)dx}\Big)$,  $b_{  \tau_0 }=b_{  \tau_0,\tau_2}$ is given in Corollary \ref{cr 2.1} with 
 $$m=\tau_0,\qquad  \tau=\tau_2:=\frac23 \frac{\tau_0-2}{\tau_0+1}. $$
  
 Then  the problem (\ref{eq 2.2-IIs})  has  a  solution $u_0$   such that    
  \begin{equation}\label{asymp-1}
  u_0(x)= u_{\infty}+O(|x|^{\frac{2-\tau_0}{\tau_0+1}}(\ln |x|)^{\frac{1 }{\tau_0+1}} )\quad {\rm as}\ \  x\to\infty, 
  \end{equation}
where   $u_{\infty}\in\R$ satisfies that 
   \begin{equation}\label{asymp-1-1}
     \Big| u_{\infty}-\frac1\kappa\ln\big(\frac{\int_{\Z^2}g(x)dx}{\int_{\Z^2}K(x)dx}\big)\Big|\leq  r_0.    \end{equation} 
     
    Furthermore,  we have that 
      \begin{equation}\label{asymp-1+1}
     \int_{\Z^2}K e^{\kappa u_0}dx = \int_{\Z^2} g dx.   
       \end{equation} 
  
 \end{proposition}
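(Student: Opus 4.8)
The plan is to solve \eqref{eq 2.2-IIs} by a Schauder fixed-point argument (Theorem~\ref{fixed-point}) carried out in the weighted space $X:=\ell^\infty_{\tau_2}(\Z^2)$, inverting $-\Delta$ through convolution with the Green's function $\Phi_0$. The guiding observation is that the prescribed identity \eqref{asymp-1+1} forces the source $Ke^{\kappa u}-g$ to have vanishing total mass, which is exactly the hypothesis \eqref{sst-1} required to apply Proposition~\ref{pr 2.1} and Corollary~\ref{cr 2.1}. Accordingly I look for a solution of the form $u_0=u_\infty+w$, with $u_\infty\in\R$ a constant and $w\to 0$ at infinity. For $w\in X$ with $\|w\|_{\ell^\infty_{\tau_2}}\leq r_0$ (so in particular $\|w\|_{\ell^\infty}\leq r_0$), I set
$$u_\infty(w):=\frac1\kappa\ln\frac{\int_{\Z^2}g\,dx}{\int_{\Z^2}Ke^{\kappa w}\,dx},\qquad f_w:=Ke^{\kappa u_\infty(w)}e^{\kappa w}-g.$$
Here $\int_{\Z^2}K\,dx,\ \int_{\Z^2}g\,dx\in(0,\infty)$ because $K,g\gneqq0$, $K,g\in\ell^\infty_{\tau_0}$, and \eqref{eq:general1}; the normalization defining $u_\infty(w)$ is precisely what makes $\int_{\Z^2}f_w\,dx=0$. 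Since $-\Delta\Phi_0=\delta_0$, convolution gives $-\Delta(\Phi_0\ast f_w)=f_w$, so the map $A(w):=\Phi_0\ast f_w$ has the property that any fixed point $w=A(w)$ satisfies $-\Delta w=Ke^{\kappa u_\infty(w)}e^{\kappa w}-g$, whence $u_0:=u_\infty(w)+w$ solves \eqref{eq 2.2-IIs}.

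Next I verify the hypotheses of Theorem~\ref{fixed-point} on the ball $D:=\{w\in X:\|w\|_{\ell^\infty_{\tau_2}}\leq r_0\}$. For the self-map property I use $e^{-\kappa r_0}\int_{\Z^2}K\leq\int_{\Z^2}Ke^{\kappa w}\leq e^{\kappa r_0}\int_{\Z^2}K$, which yields $|u_\infty(w)-\tfrac1\kappa\tilde c_0|\leq r_0$ — this is \eqref{asymp-1-1} — and therefore $e^{\kappa u_\infty(w)}e^{\kappa w(x)}\leq e^{\tilde c_0+2\kappa r_0}$ uniformly in $x$. Consequently
$$\|f_w\|_{\ell^\infty_{\tau_0}}\leq e^{\tilde c_0+2\kappa r_0}\|K\|_{\ell^\infty_{\tau_0}}+\|g\|_{\ell^\infty_{\tau_0}}\leq \frac{r_0}{b_{\tau_0}},$$
by the smallness hypothesis \eqref{req-1--12}, and Corollary~\ref{cr 2.1} (with $m=\tau_0$, $\tau=\tau_2$) gives $\|A(w)\|_{\ell^\infty_{\tau_2}}\leq b_{\tau_0}\|f_w\|_{\ell^\infty_{\tau_0}}\leq r_0$, so $A(D)\subset D$. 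For compactness I fix any $\tau_3\in(\tau_2,\frac{\tau_0-2}{\tau_0+1})$; Corollary~\ref{cr 2.1} bounds $A(D)$ uniformly in $\ell^\infty_{\tau_3}(\Z^2)$, and the compact embedding $\ell^\infty_{\tau_3}(\Z^2)\hookrightarrow\ell^\infty_{\tau_2}(\Z^2)$ from Lemma~\ref{lm 2.1} makes $A(D)$ precompact in $X$.

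For continuity, if $w_n\to w$ in $\ell^\infty_{\tau_2}$ then $w_n\to w$ uniformly (as $\tau_2>0$), so $e^{\kappa w_n}\to e^{\kappa w}$ uniformly and $\int_{\Z^2}Ke^{\kappa w_n}\to\int_{\Z^2}Ke^{\kappa w}$ by dominated convergence, giving $u_\infty(w_n)\to u_\infty(w)$. Hence $h_n:=e^{\kappa u_\infty(w_n)}e^{\kappa w_n}-e^{\kappa u_\infty(w)}e^{\kappa w}\to0$ uniformly, and since $f_{w_n}-f_w=Kh_n$ has zero mean we obtain $\|f_{w_n}-f_w\|_{\ell^\infty_{\tau_0}}\leq\|K\|_{\ell^\infty_{\tau_0}}\|h_n\|_{\ell^\infty}\to0$; applying Corollary~\ref{cr 2.1} to $f_{w_n}-f_w$ gives $A(w_n)\to A(w)$ in $X$. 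Theorem~\ref{fixed-point} then produces a fixed point $w\in D$. Setting $u_0=u_\infty(w)+w$, the asymptotics \eqref{asymp-1} follow from the pointwise estimate of Proposition~\ref{pr 2.1} applied to $w=\Phi_0\ast f_w$, the bound \eqref{asymp-1-1} was established above, and \eqref{asymp-1+1} is immediate since $\int_{\Z^2}Ke^{\kappa u_0}=e^{\kappa u_\infty(w)}\int_{\Z^2}Ke^{\kappa w}=\int_{\Z^2}g$ by the definition of $u_\infty(w)$. The hard part will be the self-map estimate $A(D)\subset D$: it is exactly there that the smallness constraints \eqref{req-1--12} on $\|K\|_{\ell^\infty_{\tau_0}}$ and $\|g\|_{\ell^\infty_{\tau_0}}$, together with the constant $\tilde c_0$ built from the mass ratio, must be calibrated against the operator norm $b_{\tau_0}$ of Corollary~\ref{cr 2.1} to close the iteration.
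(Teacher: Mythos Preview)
Your proposal is correct and follows essentially the same Schauder fixed-point scheme as the paper: define the map $w\mapsto\Phi_0\ast(Ke^{\kappa w+c_w}-g)$ with the normalizing constant $c_w=\kappa u_\infty(w)$ chosen so that the source has zero mass, then use Corollary~\ref{cr 2.1} for the self-map estimate and Lemma~\ref{lm 2.1} for compactness. The only cosmetic difference is the pair of weight indices: the paper takes the ball in $\ell^\infty_{\tau_1}$ with $\tau_1=\tfrac12\tfrac{\tau_0-2}{\tau_0+1}$ and gets compactness from the single estimate in $\ell^\infty_{\tau_2}$, whereas you take the ball in $\ell^\infty_{\tau_2}$ and introduce an auxiliary $\tau_3\in(\tau_2,\tfrac{\tau_0-2}{\tau_0+1})$ for compactness; both choices work and use the same constant $b_{\tau_0}=b_{\tau_0,\tau_2}$ in the decisive self-map bound.
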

 \noindent{\bf Proof. }   
     Let $\tau_1=\frac{1}{2} \frac{  \tau_0-2}{ \tau_0+1}>0.$ For $v\in \ell^\infty_{\tau_1}(\Z^2),$ define 
    $$\cT_0(v)=  \Phi_0\ast \big(   K e^{\kappa  v+c_v}-g\big),$$
    where
     $$c_v := \ln\big(\frac{\int_{\Z^2}g(x)dx}{\int_{\Z^2}K(x)e^{\kappa v(x)}dx}\big).  $$

Set $$\bX_{r_0}=\Big\{v\in \ell^\infty_{\tau_1}(\Z^2):\,  \|v\|_{\ell^\infty_{\tau_1}}\leq  r_0 \Big\}.$$ Note that $\bX_{r_0}$ is a bounded convex subset of $\ell^\infty_{\tau_1}(\Z^2).$
Note that for $v\in \bX_{r_0}$,  direct computation shows that 
 \begin{align}\label{req--0}
  \tilde  c_0 -  r_0\kappa  \leq c_v\leq   \tilde  c_0    + r_0\kappa,  
 \end{align} 
 where $\tilde  c_0:=\frac{\int_{\Z^2}g(x)dx}{\int_{\Z^2}K(x)dx}.$

For $\tau_2=\frac23\, \frac{  \tau_0-2}{ \tau_0+1}$ and $u\in \bX_{r_0},$ by Corollary \ref{cr 2.1},
\begin{align}\label{est-1}
\|\cT_0(u)\|_{\ell^\infty_{\tau_1}} \leq   \|\cT_0(u)\|_{\ell^\infty_{\tau_2}} &\leq \big\|  \Phi_0\ast \big(   K e^{\kappa  u+c_u}-g\big)   \big\|_{\ell^\infty_{\tau_2}}
\\&\leq   b_{  \tau_0,\tau_2}   \big\|   \big(   K e^{\kappa  u+c_u}-g\big)   \big\|_{\ell^\infty_{  \tau_0}}
\nonumber\\ &\leq    b_{  \tau_0}     \Big(   e^{ \tilde c_0+2\kappa r_0}   \|   K\|_{\ell^\infty_{  \tau_0}} + \| g\|_{\ell^\infty_{  \tau_0}}\Big)
\nonumber\\ &\leq  r_0,\nonumber
\end{align}
where the last inequality follows from \eqref{req-1--12}.
Hence, we have the mapping $$\cT_0:\bX_{r_0} \to \bX_{r_0}.$$
By Lemma \ref{lm 2.1}, $\ell^\infty_{\tau_2}(\Z^2)\hookrightarrow \ell^\infty_{\tau_1}(\Z^2)$ is compact, and hence $\cT_0$ is a compact mapping by \eqref{est-1}. 

Next, we show that $\cT_0$ is continuous. Let $\{v_n\}_{n=1}^\infty\cup \{v\}\subset \bX_{r_0}$ such that $v_n\to v$ in $\ell^\infty_{\tau_1}$ as $n\to \infty.$ For $\tau_1>0,$
$$\sup_n\|v_n\|_{\ell^\infty}+\|v\|_{\ell^\infty}\leq \sup_n\|v_n\|_{\ell^\infty_{\tau_1}}+\|v\|_{\ell^\infty_{\tau_1}}<\infty$$ and $\|v_n-v\|_{\ell^\infty}\to 0$  as $n\to \infty.$ By the dominated convergence theorem, $c_{v_n}\to c_v$ as $n\to \infty$
and 
 $e^{ c_{v_n}} \to e^{  c_v}$ as $n\to \infty$

 Moreover,
\begin{align*}
\|\cT_0(v_n)-\cT_0(v)\|_{\ell^\infty_{\tau_1}}&\leq b_{\tau_0} \|K e^{\kappa  v_n+c_{v_n}}-K e^{\kappa  v+c_{v}}\|_{\ell^\infty_{\tau_0}}\\
&\leq b_{\tau_0} \|K\|_{\ell^\infty_{\tau_0}}\|e^{\kappa  v_n+c_{v_n}}- e^{\kappa  v+c_{v}}\|_{\ell^\infty}
\\
&\leq b_{\tau_0} \|K\|_{\ell^\infty_{\tau_0}}\Big( \big|e^{c_{v_n}}- e^{c_{v}}\big|e^{\kappa  \|  v_n\|_{\ell^\infty}}  +e^{c_{v}}  e^{\kappa \|   {v_n}-    {v}\|_{\ell^\infty} }  \Big)
\\
&\to 0\quad{\rm as}\ \, n\to\infty.
\end{align*} 
This proves the continuity of $\cT_0.$
 
 By Schauder's fixed point theorem, Theorem~\ref{fixed-point}, we obtain a fixed point $v_0\in \bX_{r_0},$
 $$v_0=  \Phi_0\ast \big(   K e^{\kappa  v_0 +c_{v_0}}-g\big) $$
 and setting $u_0=  v_0 +\frac1\kappa c_{v_0}  $, 
 $$-\Delta u_0= K e^{\kappa  u_0}-  g \quad   {\rm in}\ \  \Z^2.  $$
This proves the existence of the solution to \eqref{eq 2.2-IIs}.  

The estimates of  (\ref{asymp-1}), (\ref{asymp-1-1}) and (\ref{asymp-1+1}) follow from Proposition~\ref{pr 2.1}, (\ref{est-1}) and 
the choice of $c_v$ in definition of $\cT_0$ . \hfill$\Box$ \medskip

\subsection{Source Case} 
We would apply Proposition \ref{cr 3.1} to obtain the solution of (\ref{eq 1.1}). \medskip


 
\noindent {\bf Proof of Theorem \ref{teo 1}. } Let $\kappa>0, \alpha>\beta\geq 0.$ We need to find a solution 
$u_\alpha$ having the asymptotics $u_\alpha\sim -\frac{\alpha}{2\pi}\ln(|x|) $
as $x\to\infty.$ We write
$$u:=\tilde u+  \alpha \Phi_{0}\quad   \text{ in $\Z^2$}.$$
It suffices to find a solution $\tilde u$ of 
 \begin{align}\label{eq:p12}
  - \Delta \tilde u=  K_{\alpha}  e^{\kappa \tilde u}- g_{\alpha,\beta} \quad
    {\rm in}\ \  \Z^2,
 \end{align}
where 
$$K_{\alpha} (x)=e^{ \alpha\kappa \Phi_{0}(x)},\qquad  g_{\alpha,\beta} =(\alpha -\beta)\delta_0. 
$$ 

In the following, we write $\sigma:=\frac{\alpha \kappa}{2\pi}.$ We always assume $\sigma>2.$ Instead of the pair of parameters $(\kappa,\alpha),$ we consider the pair $(\kappa, \sigma).$
By \eqref{eq:q1}, 
  \begin{align*} 
 e^{-2\pi c_1\sigma}   (1+|x|)^{-\sigma}\leq K_{\alpha}  (x) \leq  e^{2\pi c_1\sigma}  (1+|x|)^{-\sigma}.
 \end{align*} Hence, by \eqref{eq:general1}, there exists $C_2>1$ such that
  \begin{align*} 
  \frac{C_2^{-\sigma}}{\sigma-2}   \leq \int_{\Z^2} K_{\alpha}  (x)dx  \leq \frac{C_2^\sigma}{\sigma-2}  .
 \end{align*}
For
 $$\tilde c_{\alpha}=\ln\Big(\frac{\int_{\Z^2}g_{\alpha,\beta}(x)dx}{\int_{\Z^2}K_{\alpha} (x)dx}\Big),$$  
we have 
 $$e^{\tilde c_{\alpha}}\leq(\alpha-\beta)(\sigma-2)C_2^\sigma.$$

 By applying Proposition \ref{cr 3.1} with the setting 
   $$\tau_0=\sigma,\quad   K=K_{\alpha}\quad{\rm and}\quad  g=g_{\alpha,\beta},$$
  we need to verify  (\ref{req-1--12}), i.e.
  \begin{align}\label{req-1--012}
   \|   K_{\alpha}  \|_{\ell^\infty_{\sigma}}  \leq  \frac{1 }{2\kappa b_{\sigma} } \frac{r_0}{ e^{ \tilde c_\alpha+2\kappa r_0 }},\qquad  \|g_{\alpha,\beta} \|_{\ell^\infty_{\sigma}} \leq  \frac{ r_0}{2b_{\sigma}} 
 \end{align}
 for free parameter $r_0>0$. 
By choosing $r_0=2b_{\sigma} (\alpha-\beta),$ we only need to show that
\begin{equation}\label{eq:q2}
\|   K_{\alpha}  \|_{\ell^\infty_{\sigma}}  \leq\frac{\alpha-\beta}{\kappa}e^{-\tilde c_\alpha-4\kappa b_{\sigma}(\alpha-\beta)}.
\end{equation}
Note that for $\tau_2=\frac{2}{3}\frac{\sigma-2}{\sigma+1},$
\begin{align*} 
b_\sigma&= b_{\sigma,\tau_2}=\frac{c_0^\sigma}{(\sigma-2)^4}\left(\frac{3}{\sigma-2}\right)^{\frac{1}{\sigma+1}} \leq 3c_0^\sigma(\sigma-2)^{-4-(\sigma+1)^{-1}}.
\end{align*} 
Hence, by $\beta\geq 0,$
\begin{eqnarray*}\frac{\alpha-\beta}{\kappa}e^{-\tilde c_\alpha-4\kappa b_{\sigma}(\alpha-\beta)}&\geq& \frac{C_2^{-\sigma}}{\kappa(\sigma-2)}e^{-12c_0^\sigma\kappa (\alpha-\beta)(\sigma-2)^{-4-(\sigma+1)^{-1}}}\\
&\geq&\frac{C_2^{-\sigma}}{\kappa(\sigma-2)}e^{-24\pi c_0^\sigma \sigma (\sigma-2)^{-4-(\sigma+1)^{-1}}}\end{eqnarray*}
Since $\|   K_{\alpha}  \|_{\ell^\infty_{\sigma}}\leq e^{2\pi c_1\sigma},$ for \eqref{eq:q2},
it suffices to prove that
\begin{equation}\label{eq:qq1}\frac{1}{\kappa}\geq C_2^{\sigma}(\sigma-2)e^{2\pi c_1\sigma+24\pi c_0^\sigma \sigma (\sigma-2)^{-4-(\sigma+1)^{-1}}}=:h_0(\sigma).\end{equation}
Hence, for any fixed $\sigma>2,$ there exists sufficiently small $\kappa$ satisfying \eqref{req-1--012}, and we have a solution for $\beta<\alpha.$
Note that 
$$\lim_{\sigma\to 2}h_0(\sigma)=\lim_{\sigma\to\infty}h_0(\sigma)=\infty.$$
 In fact, we have the following.
\begin{enumerate}
\item[$(i)$] Let $\kappa^*:=\big(\min_{t\in(2,\infty)}h_0(t)\big)^{-1}$ and $a_0=\mathrm{argmin}_{(2,\infty)}h.$ For any $\kappa\in(0, \kappa^*],$ \eqref{eq:qq1} holds for $\sigma=a_0.$ In this case, $\alpha=\frac{2\pi a_0}{\kappa}.$  For any $\beta\in(0,\alpha],$ we have a solution $\tilde{u}$ to \eqref{eq:p12}, and hence a solution $u$ to \eqref{eq 1.1}.
\item[$(ii)$] For any $\epsilon\in(0,1),$ let 
$$\overline{\kappa}(\epsilon):=\big(\max_{t\in [2+\epsilon,2+\epsilon^{-1}]}h(t)\big)^{-1}\in (\kappa^*,+\infty).$$ Then for any $\kappa\in(0,\overline{\kappa}(\epsilon)]$ and any $\sigma\in[2+\epsilon,2+\epsilon^{-1}],$ we have \eqref{eq:qq1}. In this case,
$$\alpha=\frac{2\pi \sigma}{\kappa}\in [\frac{2\pi (2+\epsilon)}{\kappa},\frac{2\pi (2+\epsilon^{-1})}{\kappa}].$$ For any $\beta\in(0,\alpha],$ we have a solution  $\tilde{u}$ to \eqref{eq:p12}, and hence a solution  $u$ to \eqref{eq 1.1}.

\end{enumerate}

 Next, we estimate the solutions. Without loss of generality, we consider the second case in the above. By
Proposition \ref{cr 3.1}, the solution $\tilde{u}$ to \eqref{eq:p12} satisfies
$$\tilde{u}(x)= c+O(|x|^{\frac{4\pi- \kappa \alpha}{\kappa \alpha +2\pi}}\big(\ln |x|\big)^{\frac{2\pi }{\kappa \alpha+2\pi}}) \quad {\rm as}\ \ |x|\to+\infty,  $$
where
  \begin{align}\label{ess-1--0}
  \Big| c-\frac1\kappa \ln\big(\frac{\int_{\Z^2}g_{\alpha,\beta}(x)dx}{\int_{\Z^2}K_{\alpha}(x)dx}\big)\Big|\leq r_0. 
   \end{align}
For the solution of (\ref{eq 1.1}), $u=\tilde{u}+ \alpha \Phi_{0}$,  we have
$$
u(x)=-\frac{\alpha}{2\pi}\ln|x|+d_{\kappa,\alpha,\beta}+O(|x|^{\frac{4\pi-\alpha\kappa}{\alpha\kappa+2\pi}}\big(\ln |x|\big)^{\frac{2\pi }{\alpha\kappa+2\pi}}) 
\quad {\rm as}\ \,  |x|\to+\infty,
$$
where 
 \begin{align*} 
 d_{\kappa,\alpha,\beta}=c -\alpha\frac{\gamma_0}{2}.   
 \end{align*}
 Moreover,    from (\ref{asymp-1+1}) 
 \begin{align*}\label{ep-1-e}
\int_{\Z^2} e^{\kappa u} dx=\int_{\Z^2} K_{\alpha}  e^{\kappa \tilde u}dx = \int_{\Z^2} g_{\alpha,\beta} dx=\alpha-\beta. 
\end{align*}
This proves the results. \hfill$\Box$\medskip

\noindent{\bf Proof of Theorem~\ref{thm:main0}.}   By Theorem~\ref{teo 1}, for any $\epsilon\in(0,1)$, there exists $\overline{\kappa}=\overline{\kappa}(\epsilon)$ such that for $\kappa=\overline{\kappa},$ and any $\alpha\in[\frac{2\pi(2+\epsilon)}{\kappa},\frac{2\pi(2+\epsilon^{-1})}{\kappa}],$
there exists a solution $u_{\kappa,\alpha,0}$ to the Liouville equation \eqref{eq 1.1-0} satisfying 
\begin{align*}
u_{\kappa,\alpha,0}(x)=-\frac{\alpha}{2\pi}\ln|x|+d_{\kappa,\alpha,0}+O(|x|^{\frac{4\pi-\alpha\kappa}{\alpha\kappa+2\pi}}\big(\ln |x|\big)^{\frac{2\pi }{\alpha\kappa+2\pi}}) 
\quad {\rm as}\ \,  x\to\infty,
\end{align*}
where $d_{\kappa,\alpha,0}\in\R$ depends on $\kappa,\alpha$ such that $\int_{\Z^2} e^{\kappa u_{\kappa,\alpha,0}} =\alpha<\infty.$

For any $a>4\pi,$ there exists $\epsilon\in(0,1)$ such that
$a\in[2\pi(2+\epsilon),2\pi(2+\epsilon^{-1})].$ For this $\epsilon,$
choose $\kappa=\overline{\kappa}(\epsilon),$ and $\alpha=\frac{a}{\kappa}.$ 

Set $u_a:=\kappa u_{\kappa,\alpha,0}+\log \kappa.$ 
One easily checks that $u_a$ satisfies $-\Delta u_a=e^{u_a}.$ Other assertions follow directly, with $c_a=\kappa d_{\kappa,\alpha,0}+\log\kappa.$
 \hfill$\Box$\medskip

       \setcounter{equation}{0}
 \section{Absorption case}
 
 \subsection{Bounded solutions of the modified model}

In this  subsection, we try to find the bounded solution of 
\begin{equation}\label{eq 2.2-ab}
- \Delta u+ K e^{\kappa  u}=g\quad
    {\rm in}\ \  \Z^2,
\end{equation}
where $K,g:\Z^2\to \R$ with $K\geq 0.$  

Here, we would like to note that \cite{N82,CN91} established the existence and uniqueness of the classical solution to the model  
\begin{equation}\label{eq 2.2-kkkk}  
- \Delta u + K e^{u} = 0 \quad \text{in } \mathbb{R}^2  
\end{equation}  
under the assumption that $ K(x) \sim |x|^{-l} $ near infinity, where $ l > 2 $. Further results on Eq.(\ref{eq 2.2-kkkk}) can be found in \cite{CL97,L00} and the references therein.

In order to find a solution of (\ref{eq 2.2-ab}),  we need to use the well-known Schaefer's fixed point theorem.     

\begin{theorem}\label{Schaefer fixed-point}{\cite[Theorem 4 in Chapter 9.2]{SE}}
Let  $\bX$  be a  Banach space $\bX$ and  $\mathcal{A}:\bX\to \bX$ be a continuous and compact mapping. 
 Assume further that the set  
 $$\big\{u\in \bX: u=t \mathcal{A}(u)\, \text{ for   $t\in[0,1]$}\big\}$$
 is bounded. 
 Then $\mathcal{A}$ has a fixed point in $\bX.$
\end{theorem}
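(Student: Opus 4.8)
The plan is to deduce this form of Schaefer's theorem from Schauder's theorem (Theorem~\ref{fixed-point}) by a radial truncation of $\mathcal{A}$. By hypothesis the set $S:=\{u\in\bX: u=t\mathcal{A}(u)\ \text{for some}\ t\in[0,1]\}$ is bounded; moreover $0\in S$ (take $t=0$), so $S\neq\emptyset$ and I may fix a radius $M>0$ with $\|u\|<M$ for every $u\in S$. Let $D:=\{u\in\bX:\|u\|\leq M\}$, which is a bounded convex subset of $\bX$ containing $0$, precisely the setting required by Theorem~\ref{fixed-point}.

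First I would introduce the radial retraction $r:\bX\to D$ given by $r(v)=v$ if $\|v\|\leq M$ and $r(v)=\frac{M}{\|v\|}v$ if $\|v\|>M$. This map is continuous (the two defining formulas agree on the sphere $\{\|v\|=M\}$, and each is continuous on its closed domain, so the pasting is continuous) and carries $\bX$ into $D$. Setting $\mathcal{B}:=r\circ\mathcal{A}$ then gives a map $\mathcal{B}:D\to D$. Since $\mathcal{A}$ is continuous and compact and $r$ is continuous, $\mathcal{B}$ is continuous; it is compact because $\mathcal{A}(D)$ is relatively compact (as $D$ is bounded) and $r$ is continuous, so $\mathcal{B}(D)\subseteq r\big(\overline{\mathcal{A}(D)}\big)$ is relatively compact. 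Applying Theorem~\ref{fixed-point} to $\mathcal{B}$ on $D$ produces a fixed point $u_\ast\in D$ with $u_\ast=\mathcal{B}(u_\ast)=r\big(\mathcal{A}(u_\ast)\big)$.

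The decisive step is to upgrade $u_\ast$ to a genuine fixed point of $\mathcal{A}$, which I would do by a dichotomy on $\|\mathcal{A}(u_\ast)\|$. If $\|\mathcal{A}(u_\ast)\|\leq M$, then $r$ acts as the identity at $\mathcal{A}(u_\ast)$ and $u_\ast=\mathcal{A}(u_\ast)$, as desired. If instead $\|\mathcal{A}(u_\ast)\|>M$, then
\begin{equation*}
u_\ast=\frac{M}{\|\mathcal{A}(u_\ast)\|}\,\mathcal{A}(u_\ast)=t_\ast\,\mathcal{A}(u_\ast),\qquad t_\ast:=\frac{M}{\|\mathcal{A}(u_\ast)\|}\in(0,1),
\end{equation*}
so $u_\ast\in S$ and hence $\|u_\ast\|<M$; but the same identity forces $\|u_\ast\|=M$, a contradiction. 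Thus the second alternative is impossible, and $u_\ast=\mathcal{A}(u_\ast)$.

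The main obstacle is essentially structural rather than computational: one must verify that the truncated map $\mathcal{B}=r\circ\mathcal{A}$ retains continuity and compactness from $\mathcal{A}$, and then recognize that the a priori boundedness of $S$ is exactly what excludes the boundary case $\|\mathcal{A}(u_\ast)\|>M$. No delicate estimates enter; the entire content lies in choosing the truncation radius $M$ from the boundedness hypothesis and running the dichotomy, so that Schaefer's theorem becomes a clean consequence of Schauder's theorem.
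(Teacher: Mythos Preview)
The paper does not supply a proof of this statement; it is quoted verbatim from \cite[Theorem~4 in Chapter~9.2]{SE} as a tool. Your argument is correct and is the standard derivation of Schaefer's theorem from Schauder's theorem via radial retraction, so there is nothing to compare against in the paper itself.
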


   \begin{proposition}\label{pr 3.1-ab}
Let    $K,g\in \ell^\infty_{\tau_0}(\Z^2)$ with $\tau_0>2$,   $K\gneqq 0$,  and $\int_{\Z^2 } g(x)dx >0.$ Suppose that there exists $C_0$ such that 
\begin{equation}\label{con 0-ab}
g(x)\leq C_0 K(x),\quad\forall\,  x\in \Z^2,
\end{equation}
 then
  the problem (\ref{eq 2.2-ab})  admits   a  unique bounded  solution $w_0$, which has the asymptotic behavior
   $$w_0(x)=w_\infty+O\big(|x|^{\frac{2-\tau_0}{\tau_0+1}} (\ln |x|)^{\frac{1}{\tau_0+1}}\big)\quad {\rm as}\ \, |x|\to+\infty,  $$
where $w_\infty \in\R.$ 

Moreover, we have that 
$$ \int_{\Z^2}  K e^{\kappa w_0 }dx= \int_{\Z^2} g  dx,\qquad w_\infty\leq \frac1{\kappa} \ln \big(\| g  K^{-1}\|_{\ell^\infty(\mathrm{supp} K)}\big).$$

 \end{proposition}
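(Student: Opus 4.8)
The plan is to recast \eqref{eq 2.2-ab} as a fixed-point problem for the Green's-function convolution operator, exactly along the lines of the source case (Proposition \ref{cr 3.1}), but to replace the smallness hypothesis used there by an \emph{a priori} bound coming from the favourable (absorption) sign of the nonlinearity, so that Schaefer's theorem (Theorem \ref{Schaefer fixed-point}) applies in place of Schauder's. The guiding observation is that any bounded solution $w$ with $w\to w_\infty$ must satisfy the balance law $\int_{\Z^2}Ke^{\kappa w}\,dx=\int_{\Z^2}g\,dx$; otherwise $\Phi_0\ast(g-Ke^{\kappa w})$ would carry a logarithmic term and fail to be bounded. I would therefore build this constraint into the iteration. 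With $\tau_1=\frac12\frac{\tau_0-2}{\tau_0+1}$ and $\tau_2=\frac23\frac{\tau_0-2}{\tau_0+1}$, for $v\in\ell^\infty_{\tau_1}(\Z^2)$ set
$$c_v:=\ln\Big(\frac{\int_{\Z^2}g\,dx}{\int_{\Z^2}Ke^{\kappa v}\,dx}\Big),\qquad \cT(v):=\Phi_0\ast\big(g-Ke^{\kappa v+c_v}\big),$$
which is well defined since $\int_{\Z^2}g>0$ and $K\gneqq0$, and which is normalized so that $\int_{\Z^2}(g-Ke^{\kappa v+c_v})\,dx=0$. A fixed point $v_0=\cT(v_0)$ then gives a bounded solution $w_0:=v_0+\frac1\kappa c_{v_0}$ of \eqref{eq 2.2-ab}, and the identity $\int_{\Z^2}Ke^{\kappa w_0}=\int_{\Z^2}g$ is immediate from the choice of $c_{v_0}$.

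Next I would verify the hypotheses of Theorem \ref{Schaefer fixed-point} on $\bX=\ell^\infty_{\tau_1}(\Z^2)$. Continuity and compactness of $\cT$ follow essentially verbatim from the source case: Corollary \ref{cr 2.1} (with $m=\tau_0$, $\tau=\tau_2$) shows $\cT$ maps bounded sets into bounded subsets of $\ell^\infty_{\tau_2}(\Z^2)$, the compact embedding $\ell^\infty_{\tau_2}(\Z^2)\hookrightarrow\ell^\infty_{\tau_1}(\Z^2)$ of Lemma \ref{lm 2.1} yields compactness, and dominated convergence controls $c_v$ to give continuity. The decisive new point is the \emph{a priori} bound on the set $\{v:\,v=t\cT(v),\ t\in[0,1]\}$. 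For such $v$, writing $w_t:=v+\frac1\kappa c_v$ one has $-\Delta w_t+tKe^{\kappa w_t}=tg$ together with $\int_{\Z^2}Ke^{\kappa w_t}=\int_{\Z^2}g$. Using $g\le C_0K$ and the absorption sign, a maximum-principle argument yields the \emph{uniform} bound $\sup_{\Z^2}w_t\le\frac1\kappa\ln C_0$: at a finite maximum $x_0$ with $K(x_0)>0$ one gets $e^{\kappa w_t(x_0)}\le C_0$ at once, while if $K(x_0)=0$ the relation $-\Delta w_t(x_0)=tg(x_0)\le0$ forces the maximum to propagate along edges until it meets $\mathrm{supp}\,K$ (nonempty since $K\gneqq0$), the convergence $w_t\to$ const at infinity guaranteeing that the maximum is attained on a finite, neighbour-closed set. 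This bound gives $\|Ke^{\kappa w_t}\|_{\ell^\infty_{\tau_0}}\le C_0\|K\|_{\ell^\infty_{\tau_0}}$, whence Corollary \ref{cr 2.1} furnishes $\|v\|_{\ell^\infty_{\tau_1}}\le t\,b_{\tau_0}\big(\|g\|_{\ell^\infty_{\tau_0}}+C_0\|K\|_{\ell^\infty_{\tau_0}}\big)$, a bound independent of $t$. Schaefer's theorem then produces the desired fixed point.

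The remaining conclusions are read off as before. The asymptotics follow by applying Proposition \ref{pr 2.1} to $v_0=\Phi_0\ast(g-Ke^{\kappa w_0})$, giving $w_0(x)=w_\infty+O\big(|x|^{(2-\tau_0)/(\tau_0+1)}(\ln|x|)^{1/(\tau_0+1)}\big)$ with $w_\infty=\frac1\kappa c_{v_0}$; the stated estimate for $w_\infty$ then follows by taking $C_0$ to be the optimal constant $\|gK^{-1}\|_{\ell^\infty(\mathrm{supp}\,K)}$ in the upper bound for $\sup_{\Z^2}w_t$ (equivalently, by comparison with the constant supersolution). For uniqueness, given two bounded solutions $w_1,w_2$, the difference $\phi=w_1-w_2$ satisfies $-\Delta\phi+c\phi=0$ with $c=\kappa Ke^{\kappa\xi}\ge0$ by the mean value theorem; applying the comparison principle (Theorem \ref{thm:max}) to $\pm\phi$ forces the limits at infinity to coincide, and then the balance law $\int_{\Z^2}K(e^{\kappa w_1}-e^{\kappa w_2})=0$ together with the strong maximum principle gives $\phi\equiv0$.

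The step I expect to be the main obstacle is the uniform a priori bound $\sup_{\Z^2}w_t\le\frac1\kappa\ln C_0$: one must run the discrete maximum principle carefully across the degeneracy $\{K=0\}$ and control the limiting value at infinity, since—unlike the source case—there is no smallness of $K$ and $g$ to fall back on, and this is precisely the point where the absorption structure is essential.
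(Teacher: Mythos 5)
Your overall architecture coincides with the paper's: the same normalized operator $\cT(v)=\Phi_0\ast\big(g-Ke^{\kappa v+c_v}\big)$ with the same constant $c_v$, the same spaces $\ell^\infty_{\tau_1},\ell^\infty_{\tau_2}$, continuity/compactness borrowed from the source case, Schaefer's theorem, and essentially the same uniqueness scheme. The problem lies exactly where you predicted it would: the a priori bound on $\{v: v=t\cT(v)\}$, and your proposed mechanism there does not work in all cases. You argue that since $w_t=v+\frac1\kappa c_v$ converges to a constant at infinity, its maximum is attained at some finite vertex, and you then run the maximum principle (with propagation across $\{K=0\}$) at that vertex. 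But convergence at infinity does \emph{not} guarantee attainment: in the case $v<0$ everywhere on $\Z^2$, one has $w_t<\frac1\kappa c_v$ at every vertex while $w_t\to\frac1\kappa c_v$, so $\sup w_t$ equals the limiting value and is attained nowhere. In that case there is no vertex at which to evaluate $\Delta w_t\le 0$, and what must be bounded is precisely $e^{c_v}$, i.e.\ the limiting value itself; no pointwise maximum argument reaches it. One cannot close the estimate crudely either: bounding $e^{c_v}=\int g/\int Ke^{\kappa v}$ via $e^{\kappa v}\ge e^{-\kappa\|v\|_{\ell^\infty_{\tau_1}}}$ feeds an exponential of the unknown norm back into the fixed-point inequality, which does not yield boundedness of the set.

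The paper devotes a separate argument (its Case 2) to exactly this configuration, and it uses an ingredient absent from your proposal: the parabolicity/recurrence of $\Z^2$ (Theorem \ref{thm:para}). Concretely, for $v<0$ everywhere one shows that the set ${\bf F}=\{x:\Delta v(x)<0\}$ must be \emph{infinite}: if it were finite (or empty), the truncation $v_a=\max\{v,-a\}$ with $a=-\max_{\bf F}v>0$ would be a bounded subharmonic function tending to $0$ at infinity, hence constant by Theorem \ref{thm:para}, hence $\equiv 0$, a contradiction. One then picks $y_i\in{\bf F}$ with $y_i\to\infty$; at each $y_i$ the equation gives $K(y_i)e^{\kappa v(y_i)+c_v}<g(y_i)\le C_0K(y_i)$, forcing $K(y_i)>0$ and $e^{\kappa v(y_i)+c_v}\le\|gK^{-1}\|_{\ell^\infty(\mathrm{supp}K)}$, and letting $i\to\infty$ (using $v(y_i)\to 0$) bounds $e^{c_v}$ itself. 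This "escape to infinity along points where $\Delta v<0$" is the missing idea; your propagation-across-$\{K=0\}$ argument is fine for the attained-maximum case (the paper's Case 1 does the same thing slightly differently, choosing a maximizer adjacent to a non-maximizer to get strict inequality), but it cannot substitute for Case 2. A secondary, smaller issue: in the uniqueness part, applying Theorem \ref{thm:max} to $\pm\phi$ does not by itself force the two limits at infinity to coincide; the paper again invokes Theorem \ref{thm:para} (a bounded function with $\Delta\phi=c\phi\ge 0$ and $\phi>0$ must be constant) to derive the contradiction $K\equiv 0$ before concluding, and your appeal to a "balance law" for arbitrary bounded solutions would itself require justification.
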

\noindent{\bf Proof. } {\it Existence:}  Without loss of generality, we may assume $g\not\equiv K$ in $\Z^2$, otherwise, $u\equiv 0$ is the solution.

Set $\tau_1=\frac12 \frac{\tau_0-2}{\tau_0+1},\tau_2=\frac23 \frac{\tau_0-2}{\tau_0+1}.$
For any $v\in \ell^\infty_{\tau_1}(\Z^2),$ define
$$\cT_1(v)=\Phi_0\ast\big(g-K e^{\kappa  v+c_v }\big),$$
where 
$$c_v=  \ln\Big(\frac{\int_{\Z^2}g(x)dx}{\int_{\Z^2}K(x)e^{\kappa v(x)}dx}\Big),$$
which is finite by the assumptions that  $\int_{\Z^2 } g(x)dx>0$,  $K\gneqq 0,$ and $K,g\in\ell^\infty_{\tau_0}(\Z^2).$ \smallskip

Next we need to show $\cT_1$ has a fixed point.\smallskip 

{\it Step 1.} We show that  $\cT_1: \ell^\infty_{\tau_1}(\Z^2)\to \ell^\infty_{\tau_1}(\Z^2)$ and it is continuous and compact. 
These follow from the same arguments as in the proof of Proposition~\ref{cr 3.1}. Note that the compactness follows from the norm estimate of $\|\mathcal{T}_0(v)\|_{\ell_{\tau_2}^\infty(\Z^2)}$ and
 $$\cT_1: \ell^\infty_{\tau_1}(\Z^2)\to \ell^\infty_{\tau_2}(\Z^2)\subset \ell^\infty_{\tau_1}(\Z^2),$$
 where the latter embedding is compact by Lemma \ref{lm 2.1}.  \smallskip

{\it Step 2.} We prove that  
$${\bf A}:=\Big\{v\in \ell^\infty_{\tau_1}(\Z^2): v=t\cT_1(v)\, \text{ for  $t\in[0,1]$}\Big\}\ \ \text{ is bounded.}$$  For any $v\in {\bf A},$ 
there exists $t\in [0,1]$ such that
$v=t\cT_1(v),$ which implies that
\begin{align}\label{eq 2.3-ab}
-\Delta v=t\big(g-K e^{\kappa  v+c_{v}}\big)\quad {\rm in}\ \, \Z^2.
\end{align}
Note that (\ref{sst-3})
\begin{align}
\|v\|_{\ell^\infty_{\tau_1} } &=t \|\Phi_0\ast\big(g-K e^{\kappa  v+c_{v}}\big) \|_{\ell^\infty_{\tau_1} } \nonumber
 \\& \leq b_{\tau_1,\tau_0} \, \|\big(g-K e^{\kappa  v+c_{v}}\big) \|_{\ell^\infty_{\tau_0} } \nonumber
 \\& \leq b_{\tau_1,\tau_0}  \, \Big(\| g\|_{\ell^\infty_{\tau_0} }+ \|K\|_{\ell^\infty_{\tau_0}} \|e^{\kappa  v+c_{v}} \|_{\ell^\infty} \Big). \label{bound-1-ab}
 \end{align}  
 
 It suffices to prove that $\displaystyle\sup_{v\in {\bf A}}\|e^{\kappa  v+c_{v}} \|_{\ell^\infty}<\infty.$ Without loss of generality, we may assume $v\not\equiv 0$ and $t\in (0,1].$ For $v\in \ell^\infty_{\tau_1}(\Z^2)$,   $v(x)\to 0$ as $|x|\to \infty.$
We divide it into cases.\smallskip

{\it Case 1.} There exists $x_0\in \Z^2$ such that $v(x_0)\geq 0.$ Hence the maximum of $v$ is attained at some vertex $x_1.$ Since $v\not\equiv 0,$ $\displaystyle {\bf B}:=\{x\in \Z^2:v(x)=\sup_{x\in \Z^2}v\}\neq \Z^2.$ Hence there exists $x_2\in {\bf B}$ such that there is a neighbor of $x_2$ which is not in ${\bf B}.$ This implies that $\Delta v(x_2)<0.$ By  the equation (\ref{eq 2.3-ab}), 
\begin{align}  \label{bound-2-ab}
K(x_2) e^{\kappa v(x_2)+c_{v}}< g(x_2),
\end{align} which by \eqref{con 0-ab} implies $K(x_2)>0.$
Hence, 
$$\|e^{\kappa  v+c_{v}} \|_{\ell^\infty}\leq e^{\kappa v(x_2)+c_{v}}\leq \| g  K^{-1}\|_{\ell^\infty(\mathrm{supp} K)}.$$
By letting $|x|\to\infty,$ one has $e^{c_v}\leq \| g  K^{-1}\|_{\ell^\infty(\mathrm{supp} K)}.$\smallskip

{\it Case 2.} $v<0$ on $\Z^2.$ We claim that 
$${\bf F}:=\big\{x\in \Z^2: \Delta v(x)<0\big\}$$
is an infinite set if $F$ is not empty. 

We argue by contradiction and suppose that $F$ is a finite set,  $a:=-\max_{\bf F} v>0.$ Consider the function 
$$v_a:=\max\big\{v, -a\big\}.$$ Since $\Delta v\geq 0$ on $\Z^2\setminus {\bf F}$ and $v_a\equiv -a$ on ${\bf F},$ one can show that $\Delta v_a\geq 0$ on $\Z^2$ and $v_a$ is  bounded  on $\Z^2.$ By Theorem~\ref{thm:para}, $v_a$ is constant. Since $v_a(x)\to 0$ as $|x|\to \infty,$ $v_a\equiv 0,$ which is a contradiction. This proves the claim.
Since $F$ is infinite, we can find a sequence $\{y_i\}_{i=1}^\infty\subset {\bf F}$ such that $y_i\to \infty.$ For each $y_i,$
\begin{align*}
K(y_i) e^{\kappa v(y_i)+c_{v}}< g(y_i). 
\end{align*} By the same argument before, $K(y_i)>0$ and 
$$e^{\kappa v(y_i)+c_{v}}\leq \| g  K^{-1}\|_{\ell^\infty(\mathrm{supp} K)}.$$ Passing to the limit, $i\to \infty,$ since $v<0,$
$$\|e^{\kappa  v+c_{v}} \|_{\ell^\infty}\leq e^{c_{v}}\leq \| g  K^{-1}\|_{\ell^\infty(\mathrm{supp} K)}.$$
Combining the above cases, $\sup_{v\in {\bf A}}\|e^{\kappa  v+c_{v}} \|_{\ell^\infty}<\infty,$ and ${\bf A}$ is a bounded set.

By Theorem \ref{Schaefer fixed-point}, $\cT_1$ has a fixed point in $v_0 \in \ell^\infty_{\tau_1}(\Z^2)$ such that
$$v_0= \Phi_0\ast\big(g-K e^{\kappa  v_0+c_{v_0}}\big) \quad \text{ in $\Z^2$},$$
then we have that 
\begin{align}\label{eq 2.3--ab}
-\Delta v_0= g-K e^{\kappa v_0+c_{v_0}} \quad {\rm in}\ \, \Z^2.  
\end{align}
Set $u=\frac1{\kappa}  c_{v_0}+v_0.$ Then $u$ is the solution of (\ref{eq 2.2-ab}), $u(x)=  \frac1{\kappa} c_{v_0}+O(|x|^{\frac{2-\tau_0}{\tau_0+1}}(\ln |x|)^{\frac{1}{\tau_0+1}} )$ as $x\to+\infty$ by Proposition \ref{pr 2.1} satisfying

$$c_{v_0}\leq \ln \big(\| g  K^{-1}\|_{L^\infty(\mathrm{supp} K)}\big)$$
and 
$$ \int_{\Z^2}  K e^{\kappa u }dx= \int_{\Z^2} g  dx. $$

\smallskip

 \noindent {\it Uniqueness. }  Let $u_1,u_2$ be two solutions of  (\ref{eq 2.2-ab}) satisfying
  $$u_i(x)=a_i+O(|x|^{\frac{2-\tau_0}{\tau_0+1}}(\ln |x|))^{\frac{1}{\tau_0+1}}\big) \quad {\rm as} \ \, |x|\to+\infty, $$
    where $a_i\in\R$ for $i=1,2.$ 
Then $w=u_1-u_2$ satisfies the equation
\begin{equation}\label{eq:diff}-\Delta w+cw=0,\quad \Z^2,\end{equation} where 
$$c(x):=\left\{\begin{array}{ll}
   K(x)\frac{e^{\kappa u_1}-e^{\kappa u_2}}{u_1-u_2}(x),  & u_1(x)\neq u_2(x), \\[1.5mm]
   0,  &  \mathrm{otherwsie}.
\end{array}\right.$$
Since $c\geq 0,$ one can apply the maximum principle for $w$. We claim that $a_1=a_2.$
 Suppose that it is not true, say $a_1>a_2.$ Then $\displaystyle \lim_{|x|\to \infty}w>0.$ By the maximum principle, Theorem~\ref{thm:max}, $w>0$ on $\Z^2.$ Hence $\Delta w=cw\geq 0$ on $\Z^2.$ Since $w$ is bounded, by Theorem~\ref{thm:para}, $w$ is constant, i.e. $w\equiv b$ for some positive constant $b.$ Moreover, by the equation $$0=K(e^{\kappa u_1}-e^{\kappa u_2})=e^{\kappa u_2}(e^b-1).$$ This is a contradiction, which proves the claim.

 Now we prove that $u_1=u_2.$ Since $a_1=a_2,$ $\displaystyle \lim_{|x|\to \infty}w(x)=0.$ By the maximum principle, Theorem~\ref{thm:max}, $w\equiv 0.$ This proves the uniqueness.

 \subsection{ Non-topological solutions }
 
 Recall that $\Phi_0$ be the fundamental solution of $\Delta$ in $\Z^2$ satisfying 
    $\Phi_0(0)=0$,  $\Phi_0<0$ in $\Z^2\setminus\{0\}$ and   
$$
  \Phi_0(x)= -\frac1{2\pi}\ln |x|-\frac{\gamma_0}{2}+O(|x|^{-1})\quad {\rm as}\ \ x\to\infty.
$$

\begin{proposition}\label{pr 3-1-ab}
Assume that  $\kappa>0$ and  $\beta>\frac{4\pi}{\kappa}$. 
 
  Then  for any $\alpha\in  \big(\frac{4\pi}{\kappa}, \beta\big)$ problem (\ref{eq 1.1-ab})
has a solution ${\bf u}_\alpha$ satisfying 
\begin{align}\label{ep-1-ab}
{\bf u}_{\alpha}(x)=-\frac{\alpha}{2\pi}\ln|x|+{\bf d}_{\alpha,\beta}+O(|x|^{\frac{4\pi-\alpha\kappa}{\alpha\kappa+2\pi}}\big(\ln |x|\big)^{\frac{2\pi}{\alpha\kappa+2\pi}}) 
\quad {\rm as}\ \, |x|_{_Q}\in\N\to+\infty,
\end{align}
where ${\bf d}_{\alpha,\beta}\in\R$ depends on $\alpha,\beta$.  The solution is unique under the restriction of the asymptotic behavior 
$$u (x)=-\frac{\alpha}{2\pi}\ln|x|+O(1)\quad {\rm as}\ \, |x|_{_Q}\in\N\to+\infty.$$

Moreover, the mapping $\alpha\mapsto {\bf u}_\alpha$ is strictly decreasing, 
$${\bf d}_{\alpha,\beta}\leq \frac1\kappa \ln(\beta-\alpha)-\frac{\gamma_0}{2}\alpha$$ 
and
\begin{align}\label{ep-1-e-ab}
\int_{\Z^2} e^{\kappa {\bf u}_\alpha} dx=\beta-\alpha. 
\end{align}

   \end{proposition}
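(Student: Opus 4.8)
The plan is to reduce Problem~\eqref{eq 1.1-ab} to the modified model~\eqref{eq 2.2-ab}, for which existence, uniqueness, asymptotics and the energy identity have already been established in Proposition~\ref{pr 3.1-ab}. First I would strip off the logarithmic singularity by writing $u=\tilde u+\alpha\Phi_0$ in $\Z^2$. Since $-\Delta\Phi_0=\delta_0$ and $\Phi_0(0)=0$, this transforms~\eqref{eq 1.1-ab} into
$$-\Delta\tilde u+K_\alpha e^{\kappa\tilde u}=g_{\alpha,\beta}\quad{\rm in}\ \Z^2,\qquad K_\alpha(x):=e^{\kappa\alpha\Phi_0(x)},\quad g_{\alpha,\beta}:=(\beta-\alpha)\delta_0,$$
and the weight $\alpha$ is chosen precisely so that the prescribed asymptotics of $u$ become boundedness of $\tilde u$. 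The key preliminary step is to verify that $K_\alpha,g_{\alpha,\beta}$ meet the hypotheses of Proposition~\ref{pr 3.1-ab} with $\tau_0=\sigma:=\frac{\kappa\alpha}{2\pi}$. This is where $\alpha>\frac{4\pi}{\kappa}$ enters, being exactly equivalent to $\sigma>2$. From~\eqref{eq:q1} one gets $e^{-2\pi c_1\sigma}(1+|x|)^{-\sigma}\le K_\alpha(x)\le e^{2\pi c_1\sigma}(1+|x|)^{-\sigma}$, so $K_\alpha\in\ell^\infty_\sigma(\Z^2)$ and $K_\alpha>0$ everywhere; likewise $g_{\alpha,\beta}\in\ell^\infty_\sigma(\Z^2)$ with $\int_{\Z^2}g_{\alpha,\beta}=\beta-\alpha>0$; and since $K_\alpha(0)=1$, the domination~\eqref{con 0-ab} holds with $C_0=\beta-\alpha$.

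Granting this, Proposition~\ref{pr 3.1-ab} supplies a unique bounded $\tilde u$ with $\tilde u(x)=w_\infty+O\big(|x|^{\frac{2-\sigma}{\sigma+1}}(\ln|x|)^{\frac{1}{\sigma+1}}\big)$. Inserting $\sigma=\frac{\kappa\alpha}{2\pi}$ converts the exponents into $\frac{2-\sigma}{\sigma+1}=\frac{4\pi-\kappa\alpha}{\kappa\alpha+2\pi}$ and $\frac{1}{\sigma+1}=\frac{2\pi}{\kappa\alpha+2\pi}$. I would then set ${\bf u}_\alpha:=\tilde u+\alpha\Phi_0$ and feed in the expansion~\eqref{fund-1} of $\Phi_0$; since $\frac{4\pi-\kappa\alpha}{\kappa\alpha+2\pi}>-1$, the $O(|x|^{-1})$ tail of $\Phi_0$ is absorbed into the displayed error, giving~\eqref{ep-1-ab} with ${\bf d}_{\alpha,\beta}=w_\infty-\frac{\gamma_0}{2}\alpha$. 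The upper bound on ${\bf d}_{\alpha,\beta}$ follows from the bound on $w_\infty$ in Proposition~\ref{pr 3.1-ab}, together with $\mathrm{supp}\,K_\alpha=\Z^2$ and $\|g_{\alpha,\beta}K_\alpha^{-1}\|_{\ell^\infty}=\beta-\alpha$ (attained at the origin, where $K_\alpha=1$), yielding $w_\infty\le\frac1\kappa\ln(\beta-\alpha)$. The energy identity~\eqref{ep-1-e-ab} is immediate from $e^{\kappa{\bf u}_\alpha}=K_\alpha e^{\kappa\tilde u}$, so $\int_{\Z^2}e^{\kappa{\bf u}_\alpha}=\int_{\Z^2}g_{\alpha,\beta}=\beta-\alpha$; and uniqueness under the restriction $u(x)=-\frac{\alpha}{2\pi}\ln|x|+O(1)$ is precisely the uniqueness of bounded $\tilde u$ in Proposition~\ref{pr 3.1-ab}, read through the substitution.

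The remaining and genuinely new point is the strict monotonicity of $\alpha\mapsto{\bf u}_\alpha$, which I would obtain by a comparison argument. Fixing $\frac{4\pi}\kappa<\alpha_1<\alpha_2<\beta$ and putting $w={\bf u}_{\alpha_1}-{\bf u}_{\alpha_2}$, the two copies of~\eqref{eq 1.1-ab} carry the \emph{same} mass $\beta\delta_0$, which therefore cancels upon subtraction; hence $w$ solves $-\Delta w+cw=0$ on all of $\Z^2$ (the origin included), with
$$c(x)=\begin{cases}\dfrac{e^{\kappa{\bf u}_{\alpha_1}(x)}-e^{\kappa{\bf u}_{\alpha_2}(x)}}{{\bf u}_{\alpha_1}(x)-{\bf u}_{\alpha_2}(x)}\ge0,& {\bf u}_{\alpha_1}(x)\ne{\bf u}_{\alpha_2}(x),\\[1mm] 0,&\text{otherwise.}\end{cases}$$
By~\eqref{ep-1-ab}, $w(x)=\frac{\alpha_2-\alpha_1}{2\pi}\ln|x|+O(1)\to+\infty$, so $\liminf_{x\to\infty}w\ge0$; the maximum principle Theorem~\ref{thm:max} on $\Omega=\Z^2$ (connected, unbounded, $\delta\Omega=\emptyset$) then gives $w\ge0$, and the accompanying dichotomy forces $w>0$ since $w\not\equiv0$, i.e. ${\bf u}_{\alpha_1}>{\bf u}_{\alpha_2}$. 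I expect the main obstacle to be structural rather than analytic: the whole scheme hinges on the equivalence $\alpha>\frac{4\pi}\kappa\Leftrightarrow\sigma>2$, which simultaneously guarantees $K_\alpha\in\ell^\infty_\sigma(\Z^2)$ and the negativity of the decay exponent $\frac{4\pi-\kappa\alpha}{\kappa\alpha+2\pi}$, while the comparison step must handle the origin carefully so that the cancellation of $\beta\delta_0$ makes the homogeneous equation hold on the entire lattice.
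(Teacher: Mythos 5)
Your proposal is correct and follows essentially the same route as the paper: the substitution ${\bf u}_\alpha=\tilde u+\alpha\Phi_0$ reducing \eqref{eq 1.1-ab} to the modified absorption model with $K_\alpha=e^{\kappa\alpha\Phi_0}$, $g_{\alpha,\beta}=(\beta-\alpha)\delta_0$ and $\tau_0=\frac{\kappa\alpha}{2\pi}>2$, then invoking Proposition~\ref{pr 3.1-ab} for existence, uniqueness, the bound $w_\infty\le\frac1\kappa\ln(\beta-\alpha)$ and the energy identity, and the maximum principle (Theorem~\ref{thm:max} with \eqref{eq:diff}) for strict monotonicity. Your write-up merely makes explicit two points the paper leaves terse — the verification of hypothesis \eqref{con 0-ab} via $K_\alpha(0)=1$, and the detailed comparison argument showing $w={\bf u}_{\alpha_1}-{\bf u}_{\alpha_2}\to+\infty$ forces $w>0$ — both of which are consistent with the paper's intended argument.
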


\noindent {\bf Proof. }  We first prove the existence of the solution.
We shall find a solution 
${\bf u}_\alpha$ with $\alpha\in  \big(\frac{4\pi}{\kappa}, \beta\big)$  having the asymptotic ${\bf u}_\alpha\sim -\frac{\alpha}{2\pi}\ln(|x|) $
as $x\to\infty.$  Assume that
$${\bf u}_\alpha=\tilde u+  \alpha \Phi_0\quad   \text{ in $\Z^2$}.$$ Then $\tilde u$ is a solution of 
 \begin{align}\label{req-1--ab-0}
  - \Delta \tilde u+  K_{\alpha}  e^{\kappa \tilde u}= g_{\alpha,\beta} \quad
    {\rm in}\ \  \Z^2,
 \end{align}
where 
$$K_{\alpha} (x)=e^{ \alpha\kappa \Phi_{0}(x)},\qquad  g_{\alpha,\beta}=(\beta-\alpha)\delta_0. $$
Note that $K_{\alpha}  (0)=1$, 
 \begin{align*} 
 0<K_{\alpha}  (x)=  e^{\alpha \kappa \Phi_0(x)}\leq  c (1+|x|)^{- \frac{\kappa \alpha}{2\pi}} \quad
    {\rm for}\ \  x\in\Z^2,
 \end{align*}
where  $c>0$.  \medskip
 
   Next we shall obtain a solution by applying Proposition \ref{pr 3.1-ab} with the setting $\tau_0=\frac{\kappa \alpha}{2\pi}$,  $K=K_{\alpha }$ and $g=g_{\alpha,\beta},$ with $K,g\in \ell^\infty_{\tau_0}(\Z^2).$ Note that
$$\| g_{\alpha,\beta}  K_\alpha^{-1}\|_{\ell^\infty(\mathrm{supp} K)} = \frac{g_{\alpha,\beta}(0)}{K_{\alpha}(0)} =\beta-\alpha. $$

 Therefore,  for  any $\alpha\in  \big(\frac{4\pi}{\kappa}, \beta\big)$, 
Proposition \ref{pr 3.1-ab} shows that problem (\ref{req-1--ab-0}) has a unique solution $v_\alpha$ such that 
$$v_\alpha= c_\alpha+O(|x|^{\frac{4\pi- \kappa \alpha }{ \kappa \alpha+2\pi}}\big(\ln |x|\big)^{\frac{2\pi }{ \kappa \alpha +2\pi}}) \quad {\rm as}\ \ x\to\infty,  $$
where
  \begin{align}\label{ess-1--0-ab}
  c_\alpha\leq \frac1\kappa \ln(\beta-\alpha). 
   \end{align}

Let $${\bf u}_\alpha=v_\alpha+ \alpha  \Phi_0, $$
then ${\bf u}_\alpha$ is a solution  problem (\ref{eq 1.1-ab})  having 
$$
{\bf u}_\alpha(x)=-\frac{\alpha}{2\pi}\ln|x|+{\bf d}_{\alpha,\beta}+O(|x|^{\frac{4\pi-\alpha\kappa}{\alpha\kappa+2\pi}}\big(\ln |x|\big)^{\frac{2\pi}{\alpha\kappa+2\pi}}) 
\quad {\rm as}\ \, |x|_{_Q}\in\N\to+\infty,
$$
where 
 \begin{align*} 
 {\bf d}_{\alpha,\beta}=c_\alpha -\alpha \frac{\gamma_0}{2}\leq \frac1\kappa \ln(\beta-\alpha)-\frac{\gamma_0}{2}\alpha.   
 \end{align*}
 Moreover, we see that 
 \begin{align*}
\int_{\Z^2} e^{\kappa  {\bf u}_\alpha} dx=\int_{\Z^2} K_{\alpha}  e^{\kappa \tilde u}dx = \int_{\Z^2} g_{\alpha,\beta} dx=\beta-\alpha. 
\end{align*}
The uniqueness follows by Proposition \ref{pr 3.1-ab} and the decreasing monotonicity $\alpha\mapsto {\bf u}_\alpha$ follows from 
the maximum principle, Theorem~\ref{thm:max}, and \eqref{eq:diff}.    \hfill$\Box$

\subsection{Extremal solutions}

\begin{proposition}\label{pr 3-2-ab}
  Let $\kappa>0$ and  $\beta>\frac{4\pi}{\kappa}$, 
 then  problem (\ref{eq 1.1-ab})
has a solution ${\bf u}_0$ satisfying (\ref{ep-1-0-ab}). 
 \end{proposition}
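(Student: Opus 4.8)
\medskip

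The plan is to construct $\mathbf{u}_0$ directly by the method of sub- and supersolutions, using barriers built from the double-logarithmic profile
$$\Psi(x):=-\frac2\kappa\ln|x|-\frac2\kappa\ln\ln|x|,\qquad |x|\ \text{large},$$
so that the resulting solution is squeezed between $\underline u:=\Psi-C_2$ and $\overline u:=\Psi+C_1$ and hence satisfies \eqref{ep-1-0-ab}. The point of the profile is an approximate-solution computation: for the continuum Laplacian one has $\Delta_{\R^2}\Psi=\tfrac2\kappa|x|^{-2}(\ln|x|)^{-2}$ and $e^{\kappa(\Psi+c)}=e^{\kappa c}|x|^{-2}(\ln|x|)^{-2}$, whence
$$-\Delta_{\R^2}(\Psi+c)+e^{\kappa(\Psi+c)}=\Big(e^{\kappa c}-\tfrac2\kappa\Big)\,|x|^{-2}(\ln|x|)^{-2}.$$
Since the difference between the graph Laplacian $\Delta$ and $\Delta_{\R^2}$ applied to $\Psi$ is $O(|x|^{-4})$, of lower order than the critical scale $|x|^{-2}(\ln|x|)^{-2}$, I would conclude that $\overline u$ is a supersolution of the exterior equation $-\Delta u+e^{\kappa u}=0$ once $C_1>\frac1\kappa\ln\frac2\kappa$, and $\underline u$ a subsolution for $C_2$ large, on $\Z^2\setminus B_{R_0}$ with $R_0$ large. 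After a finite local modification inside $B_{R_0}$ and at the origin — raising $\overline u$ and lowering $\underline u$ near $0$ so that $-\Delta\overline u(0)+e^{\kappa\overline u(0)}\ge\beta\ge-\Delta\underline u(0)+e^{\kappa\underline u(0)}$ while keeping $\underline u\le\overline u$ and the exterior estimates intact — these become genuine global sub/supersolutions of \eqref{eq 1.1-ab}.

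Next I would solve \eqref{eq 1.1-ab} between the barriers by monotone iteration. Working with the massive operator $-\Delta+\lambda$ for $\lambda\ge\kappa\,e^{\kappa\sup\overline u}$ (so that $t\mapsto\lambda t-e^{\kappa t}$ is nondecreasing on the relevant range) and its order-preserving resolvent on $\Z^2$, the scheme $u_0=\overline u$, $(-\Delta+\lambda)u_{n+1}=\lambda u_n-e^{\kappa u_n}+\beta\delta_0$ produces a decreasing sequence with $\underline u\le u_{n+1}\le u_n\le\overline u$; the lower bound is preserved at every step because $\underline u$ is a subsolution, and it is here — rather than in a comparison against the solution itself — that the delicate behavior at infinity is bypassed, since the massive resolvent decays. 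The limit $\mathbf{u}_0$ solves \eqref{eq 1.1-ab} and obeys $\underline u\le\mathbf{u}_0\le\overline u$, which is exactly \eqref{ep-1-0-ab}. The energy \eqref{ep-1-e-ab} follows since $e^{\kappa\mathbf{u}_0}\asymp|x|^{-2}(\ln|x|)^{-2}$ is summable; integrating \eqref{eq 1.1-ab} over $Q_R$ and evaluating the boundary flux from the leading term $-\frac{\alpha_0}{2\pi}\ln|x|$ gives $\int_{Q_R}e^{\kappa\mathbf{u}_0}\to\beta-\alpha_0$.

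To place $\mathbf{u}_0$ within the family of Proposition~\ref{pr 3-1-ab}, I would note that comparing each $\mathbf{u}_\alpha$, $\alpha>\alpha_0$, with $\mathbf{u}_0$ via Theorem~\ref{thm:max} on $\Z^2$ gives $\mathbf{u}_\alpha\le\mathbf{u}_0$: their difference solves $-\Delta w+cw=0$ with $c\ge0$ and tends to $-\infty$ at infinity, since $\mathbf{u}_\alpha\sim-\frac{\alpha}{2\pi}\ln|x|$ decays strictly faster than $\mathbf{u}_0$ (as $\frac{\alpha}{2\pi}>\frac2\kappa$). This yields $\lim_{\alpha\downarrow\alpha_0}\mathbf{u}_\alpha\le\mathbf{u}_0$ and is the first half of the identification $\mathbf{u}_0=\lim_{\alpha\downarrow\alpha_0}\mathbf{u}_\alpha$ recorded in Theorem~\ref{teo 1-ab}$(iii)$.

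The main obstacle is precisely the critical nature of the problem at $\alpha=\alpha_0$: the absorption term decays like $|x|^{-2}(\ln|x|)^{-2}$, exactly at the threshold $m=2$ at which the Green's-function estimate Proposition~\ref{pr 2.1} (valid only for $m>2$) and the convolution $\Phi_0\ast e^{\kappa\mathbf{u}_0}$ cease to converge; this is why the fixed-point construction used for $\alpha>\alpha_0$ must be replaced by barriers and monotone iteration. The two genuinely technical points I expect to absorb the work are the sharp discrete-Laplacian estimate of the non-smooth profile $\Psi$ — proving the correction is $O(|x|^{-4})$ and therefore does not overturn the sign of $e^{\kappa c}-\frac2\kappa$ — and the local surgery of the barriers near the origin that incorporates the point source $\beta\delta_0$ while maintaining the ordering $\underline u\le\overline u$.
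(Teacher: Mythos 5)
Your overall architecture — a double-logarithmic barrier $\Psi=-\frac2\kappa\ln|x|-\frac2\kappa\ln\ln|x|$, a sign computation showing the absorption term $(e^{\kappa c}-\frac2\kappa)|x|^{-2}(\ln|x|)^{-2}$ flips sign with the additive constant, a discrete-vs-continuum Laplacian error of lower order, and a monotone iteration squeezed between the barriers — is essentially the paper's strategy: the paper uses $\Lambda_0(x)=\ln\ln(\tfrac12+|x|^2)$, computes its discrete Laplacian by a fourth-order Taylor expansion to get two-sided bounds of size $|x|^{-2}(\ln|x|)^{-2}$, sets $u_d=\alpha_0\Phi_0-\frac2\kappa\Lambda_0+d$, and runs an exhaustion-by-balls iteration between a subsolution and the supersolution $u_{d_1}$. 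Your massive-resolvent iteration in place of the Dirichlet exhaustion, and your continuum-profile-plus-$O(|x|^{-4})$-correction in place of the paper's direct discrete computation, are acceptable variants of routine steps.

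The genuine gap is the subsolution surgery at the origin, which you defer to "lowering $\underline u$ near $0$" — and that specific device cannot work. A subsolution of \eqref{eq 1.1-ab} must satisfy $\Delta u\geq e^{\kappa u}>0$, i.e.\ \emph{strict} subharmonicity, at every vertex other than $0$. Your profile $\Psi$ is singular at $|x|\le 1$, so near the origin it must be cut off or flattened; but at any vertex where the modified profile is locally constant one has $\Delta\underline u=0<e^{\kappa\underline u}$, violating the subsolution inequality \emph{no matter how negative the additive constant is} (lowering $\underline u$ uniformly shrinks $e^{\kappa\underline u}$ but never below $0$). Pointwise lowering does not help either: decreasing $\underline u$ at a vertex $x$ increases $\underline u(y)-$neighbors' deficit, i.e.\ it decreases $\Delta\underline u(y)$ at each $y\sim x$, so the violation is merely pushed outward through the finite region. (Note also that on $\Z^2$ no nonconstant function bounded above is subharmonic everywhere, by Theorem~\ref{thm:para}, so there is no hope of a globally "nice" explicit subharmonic correction.) The missing idea, which is exactly how the paper closes this step, is to take the \emph{maximum of the exterior barrier with a genuine solution}: the paper sets $w_0=\max\{\mathbf{u}_{\tilde\alpha},u_{d_2}\}$ where $\mathbf{u}_{\tilde\alpha}$, $\tilde\alpha=\frac{4\pi+\beta\kappa}{2\kappa}\in(\alpha_0,\beta)$, is the already-constructed solution of Proposition~\ref{pr 3-1-ab}. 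Since a maximum of subsolutions is a subsolution on a graph, and since $\mathbf{u}_{\tilde\alpha}$ dominates near the origin (where $u_{d_2}$ fails) while $u_{d_2}$ dominates at infinity (because $\mathbf{u}_{\tilde\alpha}\sim-\frac{\tilde\alpha}{2\pi}\ln|x|$ decays strictly faster), this $w_0$ is a global subsolution with the correct double-logarithmic lower asymptotics. Without this (or an equivalent device), your iteration has no valid lower barrier, and the lower half of the two-sided bound \eqref{ep-1-0-ab} is not established; your supersolution surgery (a min with a large constant $\geq\frac1\kappa\ln\beta$) is fine, since constants \emph{are} global supersolutions, but the asymmetry between the two sides is precisely where the work lies. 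Your closing flux argument for the energy identity is also not justified as stated — an $O(1)$ error in \eqref{ep-1-0-ab} does not control discrete boundary fluxes — but that identity is not part of this proposition (the paper proves it separately, in Lemma~\ref{lm 3.1-subsec-1-ab}, by monotone convergence along $\alpha\downarrow\alpha_0$).
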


In the critical case, we need to involve special functions to construct super and sub solutions. To this end,
we let $Q_n=\big\{ x\in\Z^2:\, |x|_{_Q}\leq n \big\}$ and 
 $$\Lambda_{ 0}(x)=
  \begin{cases}
\ln\ln(\frac12+|x|^2)\ \,\, &\text{ for } \  |x|\geq e^2\\[1mm]
 0\ \,\, &\text{ for } \  |x|< e^2.
\end{cases}
  $$ 
  Then 
   \begin{align*} 
 \int_{Q_n} (-\Delta) \Lambda_{ 0}(x)dx= -\int_{\delta Q_n} \frac{\partial\Lambda_{ 0}}{\partial n} (x)dx, 
 \end{align*}
 where $$\frac{\partial\Lambda_{ 0}}{\partial n} (x)=\sum_{y\in Q_n:y\sim x}\left(\Lambda_{ 0}(x)-\Lambda_{ 0}(y)\right).$$ Hence
    \begin{align*} 
 \big| \int_{\delta Q_n} \frac{\partial\Lambda_{ 0}}{\partial n} (x)dx\big|  dx
& \leq c\frac{n}{(\frac12+n^2)\ln(\frac12+n^2)}   \big|\delta Q_n  \big| 
 \\&\leq c\frac{4n^2}{(\frac12+n^2)\ln(\frac12+n^2)} \to 0\quad{\rm as}\ \, n\to+\infty. 
  \end{align*}

 As a consequence, 
  \begin{align}  
 \int_{\Z^2} (-\Delta) \Lambda_{ 0}(x)dx =\lim_{n\to+\infty} \int_{Q_n} (-\Delta) \Lambda_{ 0}(x)dx=0. 
 \end{align}
Let $\varphi_0(t)=\ln\ln (\frac12+t)$, then 
$\varphi_0'(t)=\frac1{(\frac12+t)\ln (\frac12+t)}$, 
$$\varphi_0''(t)=-\frac1{(\frac12+t)^2\ln (\frac12+t)}-\frac1{(\frac12+t)^2\big(\ln (\frac12+t)\big)^2},$$
$$\varphi_0'''(t)=\frac1{(\frac12+t)^3\ln (\frac12+t)}+\frac3{(\frac12+t)^3\big(\ln (\frac12+t)\big)^2}+\frac2{(\frac12+t)^3\big(\ln (\frac12+t)\big)^3}, $$
and
 \begin{align*} 
 \varphi_0^{(4)}(t)=&-\Big(\frac6{(\frac12+t)^4\ln  (\frac12+t) }+\frac{11}{(\frac12+t)^4\big(\ln  (\frac12+t)\big)^2}+\frac9{(\frac12+t)^4\big(\ln (\frac12+t)\big)^3}
 \\[1mm]&\qquad +\frac6{(\frac12+t)^4\big(\ln  (\frac12+t) \big)^4}\Big),
  \end{align*} 
then
 \begin{align*} 
 \Delta  \Lambda_{ 0}(x) &=\sum_{y\sim x} \big(\Lambda_{ 0}(y)-\Lambda_{ 0}(x)\big)
 \\&=\sum_{y\sim x}\Big(\varphi_0'(|x|^2) (|y|^2-|x|^2)+\frac12 \varphi_0''(|x|^2)(|y|^2-|x|^2)^2+\frac16 \varphi_0'''(|x|^2)(|y|^2-|x|^2)^3 
 \\&\qquad  + \frac1{24} \varphi_0^{(4)}(|  x_y|^2)(|y |^2-|x |^2)^4 \Big) 
 \\&= \frac{4}{(\frac12+|x|^2)\ln(e+|x|^2)} -\frac{4|x|^2+2}{(\frac12+|x|^2)^2\ln(e+|x|^2)}-\frac{4|x|^2+2}{ (\frac12+|x|^2)^2\big(\ln(\frac12+|x|^2)\big)^2} 
 \\&\qquad +\frac13\frac{2+12|x|^2}{(\frac12+|x|^2)^3\ln(e+|x|^2)}
 +  \frac{2+12|x|^2}{ (\frac12+|x|^2)^3\big(\ln(\frac12+|x|^2)\big)^2} 
 \\&\qquad +\frac23 \frac{2+12|x|^2}
 { (\frac12+|x|^2)^3\big(\ln(\frac12+|x|^2)\big)^3}  +W_0(x)
 \\&= -\frac{1}{ (\frac12+|x|^2) \big(\ln(\frac12+|x|^2)\big)^2}  \frac{6|x|^2+1}{2|x|^2+1}
 \bigg(\frac{2|x|^2+1}{6|x|^2+1}-\frac13 \frac{ \ln(\frac12+|x|^2)}{  \frac12+|x|^2}- \frac{1}{  \frac12+|x|^2}
  \\&\qquad\qquad -\frac23   \frac{1}{  (\frac12+|x|^2)\ln(\frac12+|x|^2) }\bigg)
   +W_0(x)
    \end{align*}
where for $|x|\geq 10$, we have that 
$$\frac13 -\frac1{10}<\frac{2|x|^2+1}{6|x|^2+1}-\frac13 \frac{ \ln(\frac12+|x|^2)}{  \frac12+|x|^2}- \frac{1}{  \frac12+|x|^2}
 -\frac23   \frac{1}{  (\frac12+|x|^2)\ln(\frac12+|x|^2) } <\frac13$$
and 
\begin{align*}  
|W_0( x)| &= \Big|\sum_{y\sim x} \big(\frac1{24} \varphi_0^{(4)}(|  x_y|^2)(|y |^2-|x |^2)^4\big)\Big|
\\&\leq   \frac1{24}  \frac{2^5 ( x_1^4+x_2^4)+24(x_1^2+x_2^2)+4 }{(\frac12+|x_y|^2)^4\ln   (\frac12+|x_y|^2) }
\\& \leq \frac1{6} \frac{ 8 |x|^4 +6 |x|^2 +1}{(|x|^2-\frac12)^4 \ln ( |x|^2-\frac12) }
\\& = \frac1{3} \Big(\frac{  4}{(|x|^2-\frac12)^2 \ln ( |x|^2-\frac12) }+ \frac{ 7}{(|x|^2-\frac12)^3 \ln ( |x|^2-\frac12) }+\frac{ 3}{(|x|^2-\frac12)^4 \ln ( |x|^2-\frac12) }\Big)
\\&<   \frac{  5}{(|x|^2-\frac12)^2 \ln ( |x|^2-\frac12) } 
\\&<   \frac{  10 }{(|x|^2+\frac12)^2 \ln ( |x|^2+\frac12) }, 
   \end{align*}
 since
   $$ x_y\in \big\{z\in \R^2:\,  z=x+t(y-x)\quad{\rm for}\  t\in[0,1]\big\}.$$ Here we have that $ |x|-1\leq |x_y|\leq |x|+1.$
  
   As a consequence, there exists $m_0\geq 10$ such that   for $|x|\geq m_0$
   \begin{equation}\label{test-1}
   - \frac{2}{ (\frac12+|x|^2)  \big(\ln (\frac12+|x|^2)\big)\big)^2}\leq \Delta  \Lambda_{0}(x) \leq -\frac12 \frac{1}{ (\frac12+|x|^2)  \big(\ln ( \frac12+|x|^2 )\big)^2} 
   \end{equation}
    and   for $|x|\leq m_0$ there exists $d_0>0$ such that 
    \begin{equation}\label{test-2}
    | \Delta  \Lambda_{0}(x)|\leq d_0.
       \end{equation}

\medskip

\noindent{\bf Proof of Proposition  \ref{pr 3-2-ab}. }  {\it Existence of the extremal solution. }  In the extremal case $\alpha_0 =\frac{4\pi}\kappa$ , we shall find a solution 
${\bf u}_{ 0}$   having the asymptotic 
$${\bf u}_{ 0}\sim -\frac{2}\kappa\ln |x| -\frac{2}{\kappa} \Lambda_{0}(x)+O(1) \quad  \text{as $|x|_{_Q}\in\N\to+\infty$}. $$

We shall obtain solutions of (\ref{eq 1.1-ab}) via Perron's method by construct suitable  sub solutions. 
To this end, we denote 
$$u_d= \alpha_0 \Phi_0-\frac{2}{\kappa} \Lambda_{0}+\frac{d}{\kappa}  \quad   \text{ in  $\Z^2$,}$$
then 
 \begin{align}\label{req-1--ab}
  - \Delta  u_d+ e^{\kappa u_d}-\beta \delta_0=  K_{d} - g_0 \quad
    {\rm in}\ \  \Z^2,
 \end{align}
where  $\alpha_0=\frac{4\pi}{\kappa}$, 
$$K_{ d} (x)=e^{d} e^{ \alpha_0\kappa \Phi_0(x) -2  \Lambda_0(x) } ,\qquad  g_0=(\beta-\alpha_0 )\delta_0+\frac{2}{\kappa} (-\Delta) \Lambda_{0} \quad{\rm in}\ \Z^2. $$
 
 Note that 
  $$ g_0= \frac{2}{\kappa}(- \Delta) \Lambda_{0}  \quad {\rm in}\ \, \Z^2\setminus\{0\},$$ 
 then (\ref{test-1}) implies that   
 \begin{align*}  
\frac1c \frac{1}{ (\frac12+|x|^2) \big(\ln(\frac12+|x|^2)\big)^2}  \leq  g_0(x)\leq c \frac{1}{ (\frac12+|x|^2) \big(\ln(\frac12+|x|^2)\big)^2}\quad \text{for  $|x|\geq m_0$,}
 \end{align*}
 (\ref{test-2}) leads to  
  $$|g_0(x)|\leq c\qquad \text{for  \ $|x|\leq m_0$}$$
and
$$
\int_{\Z^2}g_0(x)dx=\beta  -\frac{4\pi}{\kappa} >0.
$$

 Note that   
$$
 K_{d} (x)=e^d   e^{  \alpha_0 \kappa \Phi_0(x)-2 \Lambda_0(x)},
 $$
   then for some $c\geq 1$ independent of $d$ such that 
 $$
\frac1c e^d    (1+|x|)^{-2} \big(\ln(e+|x|)\big)^{-2} \leq  K_{ d}  (x)\leq c e^d    (1+|x|)^{-2} \big(\ln(e+|x|)\big)^{-2}.  
$$

Then there exists $d_1>0 $ such that 
$$ - \Delta  u_{d_1}+ e^{ \kappa u_{d_1}} -\beta \delta_0\geq 0. 
$$
and there exists  $d_2<0$ such that 
$$- \Delta  u_{d_2}+  e^{ \kappa u_{d_2}} -\beta \delta_0\leq 0\quad {\rm for}\ |x|\geq m_0$$
and 
$$u_{d_2}(x)\leq u_{\frac{4\pi+\beta\kappa}{2\kappa}}(x)\quad {\rm for}\ \ |x|\leq m_0,  $$
where $u_{\frac{4\pi+\beta\kappa}{2\kappa}}$ is the solution derived in Proposition \ref{pr 3-1-ab} with $\frac{4\pi+\beta\kappa}{2\kappa}\in (\frac{4\pi}{\kappa},\beta)$.
Obviously, $u_{d_1}> u_{d_2}$ in $\Z^2$ and there exists an integer $n_0>m_0$ such that 
$$ u_{d_2}(x)> u_{\frac{4\pi+\beta\kappa}{2\kappa}}(x)\quad {\rm for}\ \, |x|\geq n_0. $$ 

 Now we let 
 $$w_0=\max\big\{u_{\frac{4\pi+\beta\kappa}{2\kappa}} ,u_{d_2}\big\}\quad {\rm in}\ \, \Z^2.$$
 We claim that $w_0$ is a sub-solution of (\ref{eq 1.1-ab}). 
 In fact, let 
 $${\bf E}_+=\big\{x\in\Z^2:\,  u_{d_2}(x)\leq u_{\frac{4\pi+\beta\kappa}{2\kappa}}(x) \big\},  $$
 then 
 $$B_{m_0}(0)\subset {\bf E}_+\subset B_{n_0}(0). $$
  For $x\in \Z^2\setminus {\bf E}_+$, $w_0(x) =u_{d_2}(x)$, 
   \begin{align*}
 -\Delta w_0(x)  &= \sum_{y\sim x} \big( u_{d_2}(x)-w_0(y)\big) 
 \\[1mm]& \leq  \sum_{y\sim x} \big( u_{d_2}(x)-u_{d_2}(y)\big)
=-\Delta u_{d_2}(x) 
 \\[1mm]& \leq -e^{\kappa u_{d_2}(x)}+\beta\delta_0
   \\[1mm]&  =-e^{\kappa w_0(x)}+\beta\delta_0
 \end{align*}
 and for $x\in {\bf E}_+$, $w_0(x) =u_{\frac{4\pi+\beta\kappa}{2\kappa}}(x)$, 
   \begin{align*}
 -\Delta w_0(x)  &= \sum_{y\sim x} \big( u_{\frac{4\pi+\beta\kappa}{2\kappa}}(x)-w_0(y)\big) 
 \\[1mm]& \leq  \sum_{y\sim x} \big( u_{\frac{4\pi+\beta\kappa}{2\kappa}} (x)-u_{\frac{4\pi+\beta\kappa}{2\kappa}} (y)\big)
=-\Delta u_{\frac{4\pi+\beta\kappa}{2\kappa}}(x)  
 \\[1mm]& \leq -e^{\kappa u_{\frac{4\pi+\beta\kappa}{2\kappa}}(x)}+\beta\delta_0
  =-e^{\kappa w_0(x)}+\beta\delta_0.
 \end{align*}
 Therefore, $w_0$ is a sub solution of (\ref{eq 1.1-ab}). By selecting a sufficiently large $d_1$ such that  
$u_{\frac{4\pi+\beta\kappa}{2\kappa}} \leq u_{d_1}$ in $B_{n_0}(0)$, we further obtain that $w_0 \leq u_{d_1}$ in $\Z^2$.

Let 
  $w_n\ (n\geq 1)$ be the solution of  non-homogeneous problem {
  \begin{equation}\label{eq 2.1-ab}
\left\{
\begin{array}{lll}
- \Delta  u + L u =Lw_{n-1} -e^{\kappa w_{n-1}} + \beta \delta_0,  \quad
   &{\rm in}\ \ B_{n+n_0}(0), \\[2mm]
 \phantom{ -- }
u=w_0,  \quad  &{\rm   in} \ \  \Z^2\setminus B_{n+n_0}(0), 
 \end{array}
 \right.
\end{equation}
where   $n\in\N$ and $L>0$ is such that  the function $t\mapsto Lt -e^{\kappa t}$ is increasing for $t\leq \max_{x\in\Z^d}u_{d_1}(x)$. }

 
 Since $w_1=  w_0$ in $\Z^d\setminus B_{1+n_0}(0)$ and $w_0, u_{d_1}$ are  sub-solution and super-solution  of (\ref{eq 2.1-ab}) with $n=1$ respectively, then we apply  Theorem \ref{thm:max}
 with $c=L$  to  obtain that $w_0\leq  w_1\leq u_{d_1}$ in $B_{n_0+1}(0)$ and
 $$u_{d_2}\leq w_0\leq  w_1\leq u_{d_1}\quad {\rm in}\ \,  \Z^2. $$ 
Iteratively,  we obtain that 
 $$u_{d_2}\leq w_0\leq  w_1\leq \cdots\leq w_n\leq \cdots \leq  u_{d_1}\quad  {\rm   in} \ \  \Z^2.$$
 Denote 
 $${\bf u}_0:= \lim_{n\to+\infty} w_n\quad  {\rm   in} \ \  \Z^2. $$
  Then ${\bf u}_0$ is a solution of (\ref{eq 1.1-ab}) satisfying 
  $$u_{d_2}\leq {\bf u}_0\leq u_{d_1} \quad  {\rm   in} \ \  \Z^2 $$
i.e.
  $${\bf u}_0= \alpha_0 \Phi_0-\frac{2}{\kappa} \Lambda_0+O(1).$$
The proof ends.  \hfill$\Box$\smallskip

  \subsection{Properties of ${\bf u}_\alpha$}
  \begin{lemma}\label{lm 3.1-subsec-1-ab}
  Let ${\bf u}_0$ be the extremal solution of (\ref{eq 1.1-ab}) derived from Proposition  \ref{pr 3-2-ab} and
  $${\bf u}_{\alpha_0}=\lim_{\alpha\to\alpha_0^+} {\bf u}_\alpha\quad{\rm in}\ \, \Z^2,$$
  then
${\bf u}_0={\bf u}_{\alpha_0}$ in $\Z^2$  
  and
  $$\int_{\Z^2} e^{\kappa {\bf u}_0}dx=\beta-\frac{4\pi}{\kappa}. $$
  \end{lemma}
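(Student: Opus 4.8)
The plan is to realize ${\bf u}_{\alpha_0}$ as a genuine solution through a monotone limit, to compute its energy by monotone convergence, and then to identify it with the extremal solution ${\bf u}_0$ via a Liouville-type argument for the difference.

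First I would record the inequality ${\bf u}_{\alpha_0}\le {\bf u}_0$. Fix $\alpha\in(\alpha_0,\beta)$ and set $v={\bf u}_0-{\bf u}_\alpha$. Subtracting the two copies of \eqref{eq 1.1-ab} (the masses $\beta\delta_0$ cancel) gives $-\Delta v+c\,v=0$ on $\Z^2$ with $c\ge0$ exactly as in \eqref{eq:diff}. Comparing the double-logarithmic asymptotics \eqref{ep-1-0-ab} of ${\bf u}_0$ with the single-logarithmic asymptotics of ${\bf u}_\alpha$ from Proposition~\ref{pr 3-1-ab}, and using $\alpha>\alpha_0$, one finds $v(x)=\tfrac{\alpha-\alpha_0}{2\pi}\ln|x|+O(\ln\ln|x|)\to+\infty$, so $\liminf_{x\to\infty}v\ge0$; the maximum principle Theorem~\ref{thm:max}, applied on the unbounded set $\Omega=\Z^2$, yields $v\ge0$, i.e. ${\bf u}_\alpha\le {\bf u}_0$. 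Since $\alpha\mapsto{\bf u}_\alpha$ is decreasing (Proposition~\ref{pr 3-1-ab}), the limit ${\bf u}_{\alpha_0}=\lim_{\alpha\to\alpha_0^+}{\bf u}_\alpha$ is an \emph{increasing} pointwise limit bounded above by ${\bf u}_0$, hence finite; passing to the limit in the equation (which at each vertex involves only finitely many values) shows that ${\bf u}_{\alpha_0}$ solves \eqref{eq 1.1-ab}. As $e^{\kappa {\bf u}_\alpha}\uparrow e^{\kappa {\bf u}_{\alpha_0}}$, the monotone convergence theorem and the identity $\int_{\Z^2}e^{\kappa {\bf u}_\alpha}=\beta-\alpha$ of Proposition~\ref{pr 3-1-ab} give $\int_{\Z^2}e^{\kappa {\bf u}_{\alpha_0}}=\lim_{\alpha\to\alpha_0^+}(\beta-\alpha)=\beta-\tfrac{4\pi}{\kappa}$.

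Next I would prove ${\bf u}_0={\bf u}_{\alpha_0}$ through $w:={\bf u}_0-{\bf u}_{\alpha_0}\ge0$. Subtracting the equations gives $\Delta w=e^{\kappa {\bf u}_0}-e^{\kappa {\bf u}_{\alpha_0}}\ge0$, so $w$ is a nonnegative subharmonic function on $\Z^2$. For its growth, ${\bf u}_{\alpha_0}\ge{\bf u}_\alpha$ gives $0\le w\le {\bf u}_0-{\bf u}_\alpha=\tfrac{\alpha-\alpha_0}{2\pi}\ln|x|+O(\ln\ln|x|)$ for every fixed $\alpha\in(\alpha_0,\beta)$; letting $\alpha\to\alpha_0^+$ forces $\limsup_{x\to\infty}\tfrac{w(x)}{\ln|x|}\le0$, that is $w=o(\ln|x|)$. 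The crucial step is then a discrete Liouville theorem: a nonnegative subharmonic function on $\Z^2$ of sublogarithmic growth is constant. Granting this, $\Delta w\equiv0$, hence $e^{\kappa {\bf u}_0}=e^{\kappa {\bf u}_{\alpha_0}}$ and ${\bf u}_0={\bf u}_{\alpha_0}$ (the constant being $0$). The identity $\int_{\Z^2}e^{\kappa {\bf u}_0}=\beta-\tfrac{4\pi}{\kappa}$ then follows from the previous paragraph, completing the proof.

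For the Liouville step I would run a discrete Hadamard three-circle argument. On a dyadic annulus $Q_{2^{k+1}}\setminus Q_{2^k}$ I compare the subharmonic $w$ with a harmonic function of the form $a+b\,\Phi_0$ (harmonic off the origin, with $\Phi_0(x)=-\tfrac1{2\pi}\ln|x|+O(|x|^{-1})$ by \eqref{fund-1}) matching the boundary maxima, and apply Theorem~\ref{thm:max}; this shows that $M_k:=\max_{\partial Q_{2^k}}w$ is, up to the $O(|x|^{-1})$ error in \eqref{fund-1}, a nondecreasing convex function of $k$. A nondecreasing convex function that is $o(k)$ must be constant, so $w$ is bounded, and a bounded subharmonic function on the recurrent graph $\Z^2$ is constant by Theorem~\ref{thm:para} (applied to $\sup w-w\ge0$). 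The main obstacle is precisely this step: the a priori estimate only delivers the borderline sublogarithmic growth of $w$, and the delicate point is to upgrade it to boundedness by controlling the annular comparison against the error term in \eqref{fund-1} before invoking the recurrence of $\Z^2$.
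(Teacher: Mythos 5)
Your first half coincides with the paper's proof: the comparison ${\bf u}_\alpha\le{\bf u}_0$ via Theorem~\ref{thm:max} (using that ${\bf u}_0-{\bf u}_\alpha\to+\infty$ at infinity because $\alpha>\alpha_0$), the realization of ${\bf u}_{\alpha_0}$ as a monotone limit solving \eqref{eq 1.1-ab}, and the energy identity (the paper uses dominated convergence with majorant $e^{\kappa{\bf u}_0}$, you use monotone convergence; both work, and yours is marginally cleaner). Where you genuinely diverge is the identification ${\bf u}_0={\bf u}_{\alpha_0}$: the paper perturbs the solution itself, setting $w_\epsilon={\bf u}_{\alpha_0}+\epsilon\Phi_0$ and invoking the comparison principle, whereas you work with the difference $w={\bf u}_0-{\bf u}_{\alpha_0}\ge 0$, observe it is subharmonic, extract the sublogarithmic bound $w=o(\ln|x|)$ from the whole family $\{{\bf u}_\alpha\}_{\alpha>\alpha_0}$, and then appeal to a discrete Liouville theorem. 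Your derivation of $w=o(\ln|x|)$ is correct, and this difference-function formulation is in fact more robust than the paper's step (whose supersolution inequality for ${\bf u}_{\alpha_0}+\epsilon\Phi_0$ involves delicate sign checks off the origin, since $\Phi_0<0$ makes $e^{\kappa w_\epsilon}<e^{\kappa {\bf u}_{\alpha_0}}$ there).

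The gap is the Liouville step itself: it is only sketched, and the sketch has a real pitfall. On the boundary of a square annulus $\partial Q_{2^k}$ the Euclidean norm varies by a factor $\sqrt 2$, so $\ln|x|$ --- and hence any comparison function $a+b\,\Phi_0$ --- oscillates by an amount of order $1$ at every scale, not by the summable $O(|x|^{-1})$ error of \eqref{fund-1}; the almost-convexity of $M_k$ then carries non-vanishing errors and the implication ``convex, nondecreasing, $o(k)$ $\Rightarrow$ constant'' no longer follows without switching to Euclidean balls and redoing the bookkeeping. Fortunately you do not need three circles at all: your growth bound combines with the barrier $\epsilon\Phi_0$ (the paper's own device) to finish in a few lines. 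For $\epsilon>0$, the function $w+\epsilon\Phi_0$ is subharmonic on $\Z^2\setminus\{0\}$ (the delta masses cancel in $\Delta w$, and $\Delta\Phi_0=0$ off the origin) and tends to $-\infty$ since $w=o(\ln|x|)$ while $\Phi_0\sim-\frac{1}{2\pi}\ln|x|$; hence its maximum is attained, and the propagation argument from the proof of Theorem~\ref{thm:max}, run on the connected set $\Z^2\setminus\{0\}$, shows the maximum can only occur at the origin. Since $\Phi_0(0)=0$, this gives $w+\epsilon\Phi_0\le w(0)$ on $\Z^2$, and letting $\epsilon\to0^+$ yields $w\le w(0)$. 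Now $w$ is subharmonic on all of $\Z^2$ (also at $0$) and attains an interior maximum, so it is constant; then $0=\Delta w=e^{\kappa{\bf u}_0}-e^{\kappa{\bf u}_{\alpha_0}}$ forces $w\equiv 0$. With this replacement your argument is complete and constitutes a valid alternative to the paper's proof of the reverse inequality.
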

\noindent  {\bf Proof. } Since for $\alpha>\alpha_0$, 
  $$\lim_{|x|\to+\infty} \frac{{\bf u}_\alpha(x)}{\ln |x|}=-\frac{\alpha}{2\pi},$$
  then comparison principle implies that
$$
 {\bf u}_\alpha\leq {\bf u}_0\quad{\rm in}\ \, \Z^2, 
$$
  where ${\bf u}_0$ is the extremal solution of (\ref{eq 1.1-ab}) derived from Proposition  \ref{pr 3-2-ab} with the behavior (\ref{ep-1-0-ab}) at infinity.  Then we have  that    
 \begin{align}\label{boud-dd} 
 {\bf u}_{\alpha_0}\leq  {\bf u}_0 \quad{\rm in}\ \, \Z^2. 
   \end{align}
 
{
For any $\epsilon>0$, let 
$$w_{\epsilon}={\bf u}_{\alpha_0}-\epsilon(\Phi_0-t_0) \quad{\rm in}\ \, \Z^2, $$
where 
$$t_0=\frac1{\kappa}  e^{-\kappa {\bf u}_{\alpha_0}(0) }>0.$$

Let 
$$h_{\epsilon}(x)= e^{\kappa ({\bf u}_{\alpha_0}-\epsilon(\Phi_0-t_0))}- e^{\kappa {\bf u}_{\alpha_0} }-\epsilon\delta_0,\quad\forall\, x\in\Z^2,$$
 then 
 $$h_{\epsilon}(x) =e^{\kappa ({\bf u}_{\alpha_0}(x)-\epsilon(\Phi_0(x)-t_0))}- e^{\kappa {\bf u}_{\alpha_0}(x) }\geq 0 \quad {\rm for}\ \, x\in\Z^2\setminus\{0\}$$
 and
 \begin{align*} 
 h_{\epsilon}(0)    =e^{\kappa ({\bf u}_{\alpha_0}(0)+\epsilon t_0)}- e^{\kappa {\bf u}_{\alpha_0}(0) }-\epsilon
  &= e^{\kappa {\bf u}_{\alpha_0}(0) }\big(e^{\kappa t_0\epsilon  }-1-\epsilon e^{-\kappa {\bf u}_{\alpha_0}(0) } \big)
 \\[1mm] & \geq e^{\kappa {\bf u}_{\alpha_0}(0) } \epsilon \big(\kappa t_0-  e^{-\kappa {\bf u}_{\alpha_0}(0) } \big)
  \\[1mm] &=0,
 \end{align*}
 since $\Phi_0(0)= 0$, and $\Phi_0(x)<0$ in $\Z^2\setminus\{0\}$. 
Now we can get that  
 \begin{align*} 
  - \Delta  w_\epsilon+ e^{\kappa w_\epsilon}& \geq - \Delta {\bf u}_{\alpha_0 }+ e^{\kappa ({\bf u}_{\alpha_0}-\epsilon(\Phi_0-t_0))} -\epsilon\delta_0
\\[1mm] &=  h_{\epsilon}+\beta \delta_0 
  \\[1mm]&\geq \beta \delta_0 
 =- \Delta {\bf u}_0+ e^{\kappa {\bf u}_0} \quad{\rm in}\ \ \Z^2.
 \end{align*}
 Moreover, by the monotonicity, for any $\alpha\in(\alpha_0,\alpha_0+\epsilon),$ 
 $$\lim_{|x|\to\infty}(w_{\epsilon}-{\bf u}_0)(x)\geq \lim_{|x|\to\infty} ({\bf u}_{\alpha}-\epsilon(\Phi_0-t_0)-{\bf u}_0)(x)=+\infty.$$ 
Hence, by Corollary \ref{cr com} with $f(x,t)=e^{\kappa t}$,  implies that 
  $$w_\epsilon \geq {\bf u}_0\quad{\rm in}\ \, \Z^2. $$
 By the arbitrary of $\epsilon$, 
 then ${\bf u}_{\alpha_0}\geq {\bf u}_0$ in $\Z^2$,
 which, together with (\ref{boud-dd}), implies that 
 $$ {\bf u}_{\alpha_0}= {\bf u}_0\quad{\rm in}\ \, \Z^2. $$

 Note that  
\begin{align*}
\int_{\mathbb{Z}^2} e^{\kappa \mathbf{u}_0}\,dx 
&\leq C\lim_{n\to+\infty} \int_{Q_n} (1+|x|)^{-2} \big(\ln(e+|x|)\big)^{-2}\,dx < +\infty,
\end{align*}
  the sequence $\{\mathbf{u}_\alpha\}_\alpha$ is bounded from above by $\mathbf{u}_0$ and   $\mathbf{u}_\alpha \to \mathbf{u}_{\alpha_0}$ as $\alpha \to \alpha_0^+$  pointwisely. 
Since $e^{\kappa \mathbf{u}_\alpha} \leq e^{\kappa \mathbf{u}_0} \in \ell^1(\mathbb{Z}^2)$), and the identity $\int_{\mathbb{Z}^2} e^{\kappa \mathbf{u}_\alpha}\,dx = \beta - \alpha$ for $\alpha < \alpha_0$, then we obtain that  
\begin{align*}
\int_{\mathbb{Z}^2} e^{\kappa \mathbf{u}_{0}}\,dx 
= \lim_{\alpha \to \alpha_0^-} \int_{\mathbb{Z}^2} e^{\kappa \mathbf{u}_\alpha}\,dx 
= \lim_{\alpha \to \alpha_0^-} (\beta - \alpha) 
= \beta - \alpha_0.
\end{align*}
We complete the  proof.  \hfill$\Box$}\medskip

\begin{corollary}\label{cr 3.1-subsec-1-ab}
$\kappa>0$,  $\beta>\frac{4\pi}{\kappa}$ and $\{{\bf u}_\alpha\}_{\alpha\in[\alpha_0,\beta)}$ be the solutions of (\ref{eq 1.1-ab})  derived in Proposition \ref{pr 3-1-ab}. 
  Then for any $\tilde \alpha\in (\alpha_0,\beta)$, we have 
 $${\bf u}_{\tilde \alpha}=\lim_{\alpha\to \tilde \alpha} {\bf u}_\alpha\quad{\rm locally\ in}\ \, \Z^2.$$
 \end{corollary}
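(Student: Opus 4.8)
The plan is to exploit the strict monotonicity of the family $\{{\bf u}_\alpha\}$ in $\alpha$ together with the exact energy identity $\int_{\Z^2} e^{\kappa {\bf u}_\alpha}\,dx = \beta-\alpha$ from Proposition~\ref{pr 3-1-ab}, and then to close the argument by a rigidity statement for nonnegative summable functions. Since $\Z^2$ carries the discrete topology, ``locally in $\Z^2$'' is the same as pointwise on $\Z^2$ (convergence being automatically uniform on finite sets), so it suffices to show $\lim_{\alpha\to\tilde\alpha}{\bf u}_\alpha(x)={\bf u}_{\tilde\alpha}(x)$ for each fixed $x$.

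First I would fix $\tilde\alpha\in(\alpha_0,\beta)$ and use the strict decreasing monotonicity $\alpha\mapsto {\bf u}_\alpha$ of Proposition~\ref{pr 3-1-ab}: for every $x$ the real map $\alpha\mapsto {\bf u}_\alpha(x)$ is decreasing, so the one-sided limits
$$\overline{u}(x):=\lim_{\alpha\to\tilde\alpha^-}{\bf u}_\alpha(x),\qquad \underline{u}(x):=\lim_{\alpha\to\tilde\alpha^+}{\bf u}_\alpha(x)$$
exist pointwise, and monotonicity furnishes the sandwich $\underline{u}\leq {\bf u}_{\tilde\alpha}\leq \overline{u}$ on $\Z^2$. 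In particular $e^{\kappa\overline{u}}\geq e^{\kappa {\bf u}_{\tilde\alpha}}\geq e^{\kappa\underline{u}}\geq 0$ pointwise.

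Next I would pass to the limit in the energy identity. Choosing $\alpha_*,\alpha^*\in(\alpha_0,\beta)$ with $\alpha_*<\tilde\alpha<\alpha^*$, monotonicity gives $e^{\kappa {\bf u}_\alpha}\leq e^{\kappa {\bf u}_{\alpha_*}}$ for all $\alpha\in[\alpha_*,\tilde\alpha)$, and $e^{\kappa {\bf u}_{\alpha_*}}$ is summable since $\int_{\Z^2}e^{\kappa {\bf u}_{\alpha_*}}\,dx=\beta-\alpha_*<\infty$. Hence by dominated convergence and $\int_{\Z^2}e^{\kappa {\bf u}_\alpha}\,dx=\beta-\alpha$,
$$\int_{\Z^2}e^{\kappa \overline{u}}\,dx=\lim_{\alpha\to\tilde\alpha^-}(\beta-\alpha)=\beta-\tilde\alpha=\int_{\Z^2}e^{\kappa {\bf u}_{\tilde\alpha}}\,dx,$$
and the identical computation on $[\tilde\alpha,\alpha^*]$ (dominated by $e^{\kappa {\bf u}_{\tilde\alpha}}$) yields $\int_{\Z^2}e^{\kappa \underline{u}}\,dx=\beta-\tilde\alpha$ as well.

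Finally I would invoke rigidity: since $e^{\kappa\overline{u}}-e^{\kappa {\bf u}_{\tilde\alpha}}\geq 0$ pointwise and its total sum vanishes (both masses equal $\beta-\tilde\alpha$), every summand is zero, so $e^{\kappa\overline{u}}=e^{\kappa {\bf u}_{\tilde\alpha}}$, i.e. $\overline{u}={\bf u}_{\tilde\alpha}$; the symmetric argument applied to $e^{\kappa {\bf u}_{\tilde\alpha}}-e^{\kappa\underline{u}}\geq 0$ gives $\underline{u}={\bf u}_{\tilde\alpha}$. Thus both one-sided limits coincide with ${\bf u}_{\tilde\alpha}$, which is the claim. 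The only step calling for care is the dominated-convergence passage, where one must produce a single summable majorant valid uniformly for $\alpha$ in a one-sided neighborhood of $\tilde\alpha$; monotonicity supplies exactly this, so I do not expect a real obstacle. The virtue of this route is that it never requires verifying that $\overline{u},\underline{u}$ solve \eqref{eq 1.1-ab} nor controlling their asymptotics (which would be the delicate point were one to finish instead through the uniqueness part of Proposition~\ref{pr 3-1-ab}).
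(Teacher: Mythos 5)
Your proposal is correct and follows essentially the same route as the paper: the monotonicity of $\alpha\mapsto{\bf u}_\alpha$ (via the comparison principle) gives the one-sided limits and the sandwich $\underline{u}\leq{\bf u}_{\tilde\alpha}\leq\overline{u}$, and the continuity of the energy $\alpha\mapsto\int_{\Z^2}e^{\kappa{\bf u}_\alpha}\,dx=\beta-\alpha$ forces both limits to equal ${\bf u}_{\tilde\alpha}$. Your write-up merely makes explicit the steps the paper leaves implicit (the dominated-convergence passage and the rigidity of a nonnegative summable function with zero total sum), which is a faithful and complete filling-in of the same argument.
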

\noindent{\bf Proof. } { By the decreasing monotonicity of the mapping $\alpha\in (\alpha_0,\beta)\mapsto {\bf u}_\alpha$, we can set 
 $${\bf u}_{\tilde \alpha,+}=\lim_{\alpha\to\tilde \alpha ^+} {\bf u}_\alpha\quad{\rm in}\ \, \Z^2, $$
 which is a solution of  (\ref{eq 1.1-ab}) with $\alpha=\tilde \alpha$ and 
  \begin{align} 
{\bf u}_{\tilde \alpha,+} \leq {\bf u}_{\tilde \alpha} \quad{\rm in}\ \, \Z^2.  
  \end{align}
  We need to prove that $${\bf u}_{\tilde \alpha,+} \equiv {\bf u}_{\tilde \alpha} \quad \mathrm{in\ } \Z^2.$$ Suppose that it is not true, then 
\begin{align}\label{boud-dd1}
 \int_{\Z^2} e^{\kappa  {\bf u}_{\tilde \alpha,+}} dx  <   \int_{\Z^2} e^{\kappa {\bf u}_{\tilde \alpha}}dx 
 = \beta- \tilde \alpha. 
 \end{align}
Again by the decreasing monotonicity of the mapping $\alpha\in (\alpha_0,\beta)\mapsto {\bf u}_\alpha$,  there holds  that 
$${\bf u}_{\tilde \alpha,+}\geq {\bf u}_\alpha\quad {\rm for}\ \alpha\in (  \tilde \alpha,\beta), $$
then
\begin{align*}
 \int_{\Z^2} e^{\kappa {\bf u}_{\tilde \alpha,+}}dx  \geq \lim_{\alpha \to \tilde\alpha ^+} \int_{\Z^2} e^{\kappa {\bf u}_{\alpha}}dx 
 = \lim_{\alpha \to \tilde \alpha^+}  (\beta -\alpha)
 = \beta- \tilde \alpha 
 \end{align*}
which contradicts (\ref{boud-dd1}). } Hence $$ \lim_{\alpha\to \tilde \alpha^+} {\bf u}_\alpha={\bf u}_{\tilde \alpha}\quad{\rm in}\ \, \Z^2. $$
  
Similarly,  we can show that 
  $$ \lim_{\alpha\to \tilde \alpha^-} {\bf u}_\alpha={\bf u}_{\tilde \alpha}\quad{\rm in}\ \, \Z^2. $$
The proof ends.\hfill$\Box$ \medskip

\noindent {\bf Proof of Theorem \ref{teo 1-ab}. }  Problem (\ref{eq 1.1-ab})  has solutions  $\{{\bf u}_\alpha\}_{\alpha\in[\alpha_0,\beta)}$  in Proposition \ref{pr 3-1-ab},  has an extremal solution ${\bf u}_{\alpha_0}$ in Proposition \ref{pr 3-2-ab} and the properties of solutions in $(iii)$ are proved in Lemma \ref{lm 3.1-subsec-1-ab} and Corollary \ref{cr 3.1-subsec-1-ab}. 
 \hfill$\Box$

  \bigskip

   \bigskip

\bigskip

{\footnotesize 
\noindent {\bf  Conflicts of interest:} The authors declare that they have no conflicts of interest regarding this work.

\smallskip 

\noindent {\bf  Data availability:}  This paper has no associated data.\smallskip

\noindent {\bf Acknowledgements:} We thank the referees for their detailed reports and valuable suggestions, which greatly improved our writing. We would like to thank Genggeng Huang, Yong Lin, Yunyan Yang for helpful discussions on Kazdan-Warner equations on graphs.
 H. Chen is supported by  NSFC,  No.  12361043.
 B. Hua is supported by NSFC, No.12371056.

}

\bibliographystyle{alpha}
\bibliography{KW2025}
            
  \end{document}